\numberwithin{equation}{section}
\newtheorem{theorem}{Theorem}[section]
\newtheorem{lemma}[theorem]{Lemma}
\newtheorem{proposition}[theorem]{Proposition}
\newtheorem{corollary}[theorem]{Corollary}
\theoremstyle{definition}
\newtheorem{definition}[theorem]{Definition}
\theoremstyle{remark}
\newtheorem{remark}[theorem]{Remark}
\newcommand{\dr}{\mathrm{d}}
\newcommand{\dist}{\operatorname{dist}}
\newcommand{\Ric}{\operatorname{Ric}}
\newcommand{\Riem}{\operatorname{Riem}}
\newcommand{\Sc}{\operatorname{Sc}}
\newcommand{\curl}{\operatorname{curl}}
\newcommand{\prin}{\mathrm{prin}}
\newcommand{\sub}{\mathrm{sub}}
\newcommand{\loc}{\mathrm{loc}}
\renewcommand{\tilde}{\widetilde}
\renewcommand*{\backrefalt}[4]{%
\ifcase #1 %
No citations%
\or
\ding{43}~p.~#2%
\else
\ding{43}~pp.~#2%
\fi}
\begin{document}

\title{A microlocal pathway to spectral asymmetry:\\ curl and the eta invariant}
\author{Matteo Capoferri\,\orcidlink{0000-0001-6226-1407}\thanks{MC:
Dipartimento di Matematica ``Federigo Enriques'', 
Università degli Studi di Milano, 
Via C.~Saldini 50, 20133 Milan, Italy;
Maxwell Institute for Mathematical Sciences, 
Edinburgh
\&
Department of Mathematics,
Heriot-Watt University,
Edinburgh EH14 4AS, UK;
matteo.capoferri@unimi.it
\url{https://mcapoferri.com}.
}
\and
Dmitri Vassiliev\,\orcidlink{0000-0001-5150-9083}\thanks{DV:
Department of Mathematics,
University College London,
Gower Street,
London WC1E~6BT,
UK;
D.Vassiliev@ucl.ac.uk,
\url{http://www.homepages.ucl.ac.uk/\~ucahdva/}.
}}

\renewcommand\footnotemark{}

\date{10 December 2025}

\maketitle

\vspace{-.6cm}

\begin{abstract}
The notion of eta invariant is traditionally defined by means of analytic continuation. We prove, by examining the particular case of the operator curl, that the eta invariant can equi\-valently be obtained as the trace of the difference of positive and negative spectral projections, appropriately regularised. Our construction is direct, in the sense that it does not involve analytic continuation, and is based on the use of pseudodifferential techniques. This provides a novel approach to the study of spectral asymmetry of non-semibounded (pseudo)differential systems on manifolds which encompasses and extends previous results.

\

{\bf Keywords:} curl, spectral asymmetry, eta invariant, pseudodifferential projections.

\

{\bf 2020 MSC classes: }
primary
58J50; 
secondary
35P20, 
35Q61, 
47B93, 
47F99; 
58J28, 
58J40. 

\end{abstract}

\tableofcontents

\allowdisplaybreaks

\section{Statement of the problem}
\label{Statement of the problem}

The study of \emph{spectral asymmetry}, namely, the difference in the distribution of positive and negative eigenvalues of (pseudo)differential operators on manifolds is a well-established area of pure mathematics, initiated by Atiyah, Patodi and Singer in their seminal series of papers \cite{asymm1, asymm2, asymm3, asymm4}. The classical approach to the subject goes as follows. Given a non-semibounded self-adjoint first order (pseudo)differential operator with (real) eigenvalues $\{\lambda_k\}$, one introduces the quantity
\[
\eta(s):=\sum_{\lambda_k\neq 0} \frac{\operatorname{sgn}(\lambda_k)}{|\lambda_k|^s}, \qquad s\in \mathbb{C},
\]
called the \emph{eta function of the operator}. After showing that the eta function can be defined as a meromorphic function in the whole complex plane with no pole at $s=0$, one declares the value $\eta(0)$, called \emph{eta invariant} of the operator, to be a measure of the spectral asymmetry of the operator. The motivation underpinning this definition is that for a self-adjoint operator acting in a finite-dimensional vector space the quantity $\eta(0)$ is precisely the number of positive eigenvalues minus the number of negative eigenvalues. Let us mention that one can also define a \emph{local} version of the eta function, where each term in the infinite series is weighted by the modulus squared of the corresponding eigenfuncions, and for which similar results can be proved --- see, e.g., \cite{gilkey_local,bismut}.

The eta invariant, a geometric invariant, turned out to be a powerful concept with far-reaching consequences across analysis, geometry, and beyond. For instance, it features in the Atiyah--Singer Index Theorem for elliptic operators on manifolds with boundary, to name but one of its most well-known applications. The existing body of work on the topic is massive, especially in the case of Dirac and Dirac-type operators, hence we do not even attempt a systematic review of the literature, which would go beyond the scope of the current paper. We shall just emphasise that most of the existing approaches hinge on a combination of techniques from complex analysis (analytic continuation) and differential topology (characteristic classes), often relying on black-box-type arguments.

This paper completes the analysis initiated in \cite{curl},
resulting in a new, direct approach to the study of eta functions (both local and global) and eta invariants through the prism of microlocal analysis, starting from a less well-studied, yet fundamental, operator: the operator curl.
We refer the reader to \cite{curl} for a more detailed review of existing literature.

\

Let $(M,g)$ be a connected closed oriented Riemannian manifold of dimension $d=3$. We denote by $\rho(x):=\sqrt{\det g_{\alpha\beta}(x)}$ the Riemannian density and by $\Omega^k=\Omega^k(M)$, $0\le k\le3$, the space of real-valued $k$-forms over $M$. Furthermore, we denote by $*$, $\dr$ and $\delta$ the Hodge dual, the exterior derivative (differential) and the codifferential, respectively. Finally, we denote by $\Riem$, $\Ric$ and $\Sc$ the Riemann tensor, the Ricci tensor and scalar curvature\footnote{The Riemann curvature tensor $\Riem$ has components ${\Riem^\kappa}_{\lambda\mu\nu}$ defined in accordance with 
\[
{\Riem^\kappa}_{\lambda\mu\nu}:=
\dr x^\kappa(\Riem(\partial_\mu\,,\partial_\nu)\,\partial_\lambda)
=
\partial_\mu{\Gamma^\kappa}_{\nu\lambda}
-\partial_\nu{\Gamma^\kappa}_{\mu\lambda}
+{\Gamma^\kappa}_{\mu\eta}{\Gamma^\eta}_{\nu\lambda}
-{\Gamma^\kappa}_{\nu\eta}{\Gamma^\eta}_{\mu\lambda}\,,
\]
the $\Gamma$'s being Christoffel symbols. The Ricci tensor is defined as $\Ric_{\mu\nu}:=R^\alpha{}_{\mu\alpha\nu}$ and $\Sc:=g^{\mu\nu}\Ric_{\mu\nu}$ is scalar curvature.}. We refer the reader to \cite[Appendix~A]{curl} for our differential geometric conventions.

We equip $\Omega^k(M)$ with the $L^2$ inner product
\begin{equation}
\label{inner product}
\langle u,v \rangle:=\int_M *u \wedge v=\int_M u \wedge * v\,,
\end{equation}
where $\wedge$ is the exterior product of differential forms, and define $H^s(M)$, $s>0$, to be the space of differential forms that are square integrable together with their partial derivatives up to order $s$. We do not carry in our notation for Sobolev spaces the degree of differential forms: this will be clear from the context. Henceforth, to further simplify notation we drop the $M$ and write $\Omega^k$ for $\Omega^k(M)$ and $H^s$ for $H^s(M)$.

\

Hodge's Theorem \cite[Corollary~3.4.2]{jost} tells us that $\Omega^k$ decomposes into a direct sum of three orthogonal closed subspaces
\begin{equation*}
\label{Hodge decompostion}
\Omega^k
=
\dr\Omega^{k-1}
\oplus
\delta\Omega^{k+1}
\oplus
\mathcal{H}^k,
\end{equation*}
where $\dr\Omega^{k-1}$, $\delta\Omega^{k+1}$ and $\mathcal{H}^k$ are the Hilbert subspaces of exact, coexact and harmonic $k$-forms, respectively.

\

\begin{definition}
\label{definition curl}
We define $\curl$ to be the operator
\begin{equation}
\label{definition curl equation}
\operatorname{curl}=*\dr : \delta\Omega^2 \cap H^1\to \delta\Omega^2\,.
\end{equation}
\end{definition}

Observe that Definition~\ref{definition curl} makes sense, because $*\dr$ maps coexact 1-forms to coexact 1-forms. It is well-known --- see, e.g., \cite{giga,baer_curl} and \cite[Theorem~2.1]{curl} --- that $\curl$ as defined by \eqref{definition curl equation} is a self-adjoint operator with discrete spectrum accumulating to both $+\infty$ and $-\infty$. Furthermore, zero is not an eigenvalue of $\curl$.
Note, however, that $\curl$ is not elliptic\footnote{Recall that, by definition, a matrix (pseudo)differential operator is elliptic if the determinant of its principal symbol is nonvanishing on $T^*M\setminus\{0\}$.}. Indeed,  the formula for the principal symbol of $\operatorname{curl}$ (which happens to coincide with its full symbol) reads
\begin{equation}
\label{principal symbol curl}
[\operatorname{curl}_\mathrm{prin}]_\alpha{}^\beta(x,\xi)= -i\,E_\alpha{}^{\beta\gamma}(x)\,\xi_\gamma\,,
\end{equation}
where the tensor $E$ is defined in accordance with 
\begin{equation}
\label{Levi-Civita tensor}
E_{\alpha\beta\gamma}(x):=\rho(x)\,\varepsilon_{\alpha\beta\gamma}
\end{equation}
and $\varepsilon$ is the totally antisymmetric symbol,
$\varepsilon_{123}:=+1$.
A straightforward calculation shows that
\begin{equation*}
\label{determinant principal symbol curl}
\det (\operatorname{curl}_\mathrm{prin})=0.
\end{equation*}
The eigenvalues of $\operatorname{curl}_\mathrm{prin}$ are simple and read
\begin{equation*}
\label{eigenvalues principal symbol curl}
h^{(0)}(x,\xi)=0, \qquad h^{(\pm)}(x,\xi)=\pm\|\xi\|\,,
\end{equation*}
for all $(x,\xi)\in T^*M\setminus \{0\}$, where
\begin{equation*}
\label{norm of xi}
\|\xi\|
:=\sqrt{g^{\mu\nu}(x)\,\xi_\mu\xi_\nu}\,.
\end{equation*}
Consequently,  standard elliptic theory does not apply and particular care is required when studying the spectrum of $\curl$.

\

Let $\lambda_j$ be the eigenvalues of $\curl$ and $u_j$ its orthonormalised eigenforms. Here we enumerate using positive integers $j$ for positive eigenvalues and negative integers $j$ for negative eigenvalues, so that
\begin{equation*}
\label{6 November 2022 equation 1}
-\infty \leftarrow\ldots\le\lambda_{-2}\le\lambda_{-1}<0<\lambda_1\le\lambda_2\le\ldots \rightarrow +\infty\,,
\end{equation*}
with account of multiplicities.

\begin{definition}
\label{definition eta functions}
We define the \emph{local eta function} for the operator $\curl$ as
\begin{equation}
\label{local eta function for curl}
\eta_{\curl}^\loc(x;s)
:=
\sum_{j\in\mathbb{Z}\setminus\{0\}}\frac{\operatorname{sgn}\lambda_j}{|\lambda_j|^s}\,*(u_j\wedge *u_j)(x)\,,
\end{equation}
where
$*(u_j\wedge *u_j)(x)=g^{\alpha\beta}(x)\,[u_j]_\alpha(x)\,[u_j]_\beta(x)$ 
is the pointwise norm squared of the eigenform $u_j$.

We define the \emph{(global) eta function} for the operator $\curl$ as
\begin{equation}
\label{global eta function for curl}
\eta_{\operatorname{curl}}(s)
:=
\sum_{k\in\mathbb{Z}\setminus\{0\}}\frac{\operatorname{sgn}\lambda_k}{|\lambda_k|^s}\,.
\end{equation}
\end{definition}

It is not hard to show that the series \eqref{global eta function for curl} converges absolutely for $\operatorname{Re} s>3$ and defines a meromorphic function in $\mathbb{C}$ by analytic continuation, with possible first order poles at $s=3,2,1,0,-1,-2,\dots$. Furthermore, it is holomorphic at $s=0$, see \cite{asymm1,asymm2,asymm3,asymm4}. Similarly, for any fixed $x\in M$, the series \eqref{local eta function for curl} converges absolutely for $\operatorname{Re} s>3$ and defines a meromorphic function in $\mathbb{C}$ by analytic continuation, with possible first order poles at $s=3,2,1,-1,-2,\dots$. It is shown in \cite{5terms} that, in fact, $s\mapsto \eta_{\curl}(s)$ and, for each $x\in M$, $s\mapsto \eta_{\curl}^\loc(x;s)$ are holomorphic in the half-plane $\operatorname{Re}s>-1$; in fact, this also follows from the arguments presented in Section~\ref{Proof of Theorem main result}.

Of course, the local and global eta functions are related via the identity
\begin{equation*}
\label{relation between local and global eta functions}
\eta_{\operatorname{curl}}(s)
=
\int_M
\eta_{\operatorname{curl}}^\loc(x;s)\,\rho(x)\,\dr x\,.
\end{equation*}

\begin{definition}
\label{definition local and global eta invariants}
We call the real-valued function $\eta_{\curl}^\loc(x;0)$ the \emph{local eta invariant} for the operator $\curl$, and we call the scalar $\eta_{\curl}(0)$ the \emph{eta invariant} for the operator $\curl$.
\end{definition}

The overall philosophy of our paper is to avoid the ``black box'' of analytic continuation and approach the eta invariant intuitively, namely, 
\begin{equation*}
\eta_{\curl}(0)=\#\{\text{positive eigenvalues}\}
\,-\,
\#\{\text{negative eigenvalues}\}\,.
\end{equation*}
Indeed, for a self-adjoint operator in a finite-dimensional inner product space the quantity $\eta(0)$ can be written as
\begin{equation}
\label{eta invariant in finite dimensional setting}
\operatorname{Tr}(P_+-P_-)
\end{equation}
where $\operatorname{Tr}$ is the operator trace, and $P_+$ and $P_-$ are projections onto the direct sums of positive and negative eigenspaces, respectively.

Trying to give a meaning to \eqref{eta invariant in finite dimensional setting} for the case of non-semibounded operators acting in infinite-dimensional Hilbert spaces is both the key and the starting point of our approach, whose foundations were developed in \cite{part1,part2,diagonalisation,curl} and are summarised below, for the reader's convenience.

\

The issue at hand is how to rigorously define
\eqref{eta invariant in finite dimensional setting}
when the projection operators $P_+$ and $P_-$ have infinite rank, and understand the meaning of the outcome. 

Let us go back to the operator $\curl$ and let us introduce the following orthogonal projections acting in $\Omega^1$:
\begin{equation}
\label{definition Pplus}
P_+:=\sum_{j=1}^{+\infty}u_j \langle u_j, \,\cdot\, \rangle\,,
\end{equation}
\begin{equation}
\label{definition Pminus}
P_-:=\sum_{j=1}^{+\infty}u_{-j} \langle u_{-j}, \,\cdot\, \rangle\,,
\end{equation}
\begin{equation}
\label{definition P0}
P_0:=-\dr \Delta^{-1}\delta\,,
\end{equation}
where $\Delta^{-1}$ is the pseudoinverse of the (nonpositive) Laplace--Beltrami operator $\Delta:=-\delta \dr$. The operators $P_+$ and $P_-$ are the positive and negative spectral projections, whereas $P_0$ is the orthogonal projection onto exact 1-forms.

The operators \eqref{definition Pplus}--\eqref{definition P0} are related as
\begin{equation*}
P_++P_-+P_0=\operatorname{Id}-P_{\mathcal{H}^1},
\end{equation*}
where $\operatorname{Id}$ is the identity operator and $P_{\mathcal{H}}$ is the orthogonal projection onto the (finite-dimensional) subspace of harmonic 1-forms. It was shown in \cite{curl} that \eqref{definition Pplus}--\eqref{definition P0} are pseudodifferential operators of order zero, whose full symbol can be explicitly constructed via the algorithm given in \cite[Section~4.3]{part1}.

In what follows, we denote by $\Psi^s$ the space of classical pseudodifferential operators of order $s$ with polyhomogeneous symbols acting on 1-forms. Furthermore, we define
\begin{equation}
\label{definition psi minus infinity}
\Psi^{-\infty}:=\bigcap_s \Psi^s
\end{equation}
and we write $Q=R \mod \Psi^{-\infty}$ if $Q-R$ is an integral operator with infinitely smooth integral kernel. Recall that a pseudodifferential operator $Q\in \Psi^s$ acting on 1-forms can be written locally as
\begin{equation}
\label{11 June 2021 equation 1}
Q: u_\alpha(x)
\mapsto
v_\alpha(x)
=(2\pi)^{-d}
\int
e^{i(x-y)^\gamma\xi_\gamma}\,q_\alpha{}^\beta(x,\xi)\,
u_\beta(y)\,\dr y\,\dr\xi\,.
\end{equation}
The quantity
\begin{equation}
\label{11 June 2021 equation 2}
q_\alpha{}^\beta(x,\xi)
\sim
[q_s]_\alpha{}^\beta(x,\xi)
+
[q_{s-1}]_\alpha{}^\beta(x,\xi)
+\dots, 
\end{equation}
is called the \emph{(full) symbol} of $Q$. Here $\,\sim\,$ stands for asymptotic expansion \cite[\S~3.3]{shubin}. The components $[q_{s-k}]_\alpha{}^\beta$ of $q_\alpha{}^\beta$ are positively homogeneous in momentum $\xi$:
\begin{equation*}
[q_{s-k}]_\alpha{}^\beta(x, \lambda \,\xi)=\lambda^{s-k}\,[q_{s-k}]_\alpha{}^\beta(x, \xi), \qquad \forall\lambda>0, \quad k=0,1,2,\dots.
\end{equation*} 
Observe that the indices $\alpha$ and $\beta$ in \eqref{11 June 2021 equation 2} `live' at different points, $x$ and $y$ respectively. When writing \eqref{11 June 2021 equation 1} we implicitly used the same coordinate system for $x$ and $y$.

The leading homogeneous component of the symbol --- a smooth matrix function on $T^*M\setminus \{0\}$ --- is called the \emph{principal symbol} of $Q$ and denoted by $Q_\prin$. We denote by $Q_\sub$ the subprincipal symbol of $Q$ --- a  modification of the subleading homogeneous component of the symbol of $Q$ --- defined in accordance with \cite[Definition~3.2]{curl}. Observe that principal and subprincipal symbols are covariant quantities under changes of local coordinates, see~\cite[Remark~3.3]{curl}.

Throughout the paper we denote pseudodifferential operators with upper case letters, and their symbols and homogeneous components of symbols with lower case. At times we will write a pseudo\-differential operator as an integral operator with distributional integral kernel (Schwartz kernel); namely, we will write \eqref{11 June 2021 equation 1} as
\begin{equation}
\label{Q acting on 1-forms 1}
Q: u_\alpha(x)\mapsto\int_M \mathfrak{q}_\alpha{}^\beta(x,y)\,u_\beta(y)\,\rho(y)\,\dr y
\end{equation}
with
\begin{equation*}
\label{frak q in terms of q}
\mathfrak{q}_\alpha{}^\beta(x,y)
=
\frac{1}{(2\pi)^d\,\rho(y)}
\int
e^{i(x-y)^\gamma\xi_\gamma}\,q_\alpha{}^\beta(x,\xi)
\,\dr\xi
\end{equation*}
in a distributional (local) sense and modulo an infinitely smooth contribution.
When we do so, we use lower case Fraktur font for the Schwartz kernel.

\

The key notion which allows us to tackle \eqref{eta invariant in finite dimensional setting} is that of (pointwise) matrix trace of a pseudo\-differential operator
acting on 1-forms.

\begin{definition}
\label{definition matrix trace introduction}
Let $Q\in \Psi^s$. We call the \emph{matrix trace} of $Q$ the scalar pseudodifferential operator $\operatorname{\mathfrak{tr}} Q$ of order $s$ defined as
\begin{equation}
\label{definition matrix trace introduction equation}
\operatorname{\mathfrak{tr}} Q: f(x)\mapsto \int_M (\operatorname{\mathfrak{tr}}\mathfrak{q})(x,y)\, f(y)\, \rho(y)\, \dr y\,,
\end{equation}
where
\begin{equation}
\label{Q acting on 1-forms 6}
(\operatorname{\mathfrak{tr}}\mathfrak{q})(x,y)
:=
\mathfrak{q}_{\alpha}{}^\beta(x,y)
\,
Z_\beta{}^\alpha(y,x)
\,\chi(\operatorname{dist}(x,y)/\epsilon)
\end{equation}
is the pointwise matrix trace of the distributional kernel $\mathfrak{q}$ of $Q$.
\end{definition}

In \eqref{definition matrix trace introduction equation} $Z_{\alpha}{}^\beta(x,y)$ is the linear map \eqref{Q acting on 1-forms 3} realising parallel transport of vectors from $x$ to $y$ along the unique shortest geodesic connecting them, $\chi:[0,+\infty) \to \mathbb{R}$ is a compactly supported smooth scalar function such that $\chi=1$ in a neighbourhood of zero, $\dist$ is the geodesic distance, and $\epsilon>0$ is a sufficiently small parameter ensuring that \eqref{Q acting on 1-forms 6} vanishes when $x$ and $y$ are far away. We refer the reader to Section~\ref{The matrix trace of an operator, revisited} for a more detailed description of these quantities, as well as a more thorough discussion of the properties of the matrix trace of a (pseudo)differential operator.

Note that in Definition~\ref{definition matrix trace introduction}
we write $\operatorname{\mathfrak{tr}}$ with Fraktur font to emphasise
the fact that \eqref{Q acting on 1-forms 6} is not the standard matrix trace
$\mathrm{tr}$, but it involves parallel transport.

\

The essential idea underpinning Definition~\ref{definition matrix trace introduction} is to decompose the operation of taking the (operator) trace of an operator acting on 1-forms which is, \emph{a priori}, not necessarily of trace class, into two separate steps: first one takes the matrix trace of the original operator, thus obtaining a scalar pseudodifferential operator, then one takes the operator trace of the resulting scalar operator, in the hope that cancellations along the way would make the latter operation legitimate. 

Remarkably, this turns out to be the case for the operator $P_+-P_-$ associated with $\curl$.

\begin{definition}
\label{definition asymmetry operator}
We define the \emph{asymmetry operator} to be the self-adjoint scalar pseudodifferential operator
\begin{equation}
\label{definition asymmetry operator equation}
A:=\operatorname{\mathfrak{tr}} (P_+-P_-)\,,
\end{equation}
where $P_\pm$ are the operators \eqref{definition Pplus}--\eqref{definition Pminus} and $\operatorname{\mathfrak{tr}}$ is the matrix trace \eqref{definition matrix trace introduction equation}.
\end{definition}

\emph{Prima facie}, $A$ is an operator of order zero. It was shown in \cite[Theorem~1.4]{curl} that $A$ is, in fact, a pseudodifferential operator of order $-3$ (hence \emph{almost} of trace class\footnote{Recall that in dimension $d=3$ a sufficient condition for a self-adjoint pseudodifferential operator to be of trace class is that its order be strictly less than $-3$.}) with principal symbol
\begin{equation}
\label{Aprin}
A_\mathrm{prin}(x,\xi)=-\frac1{2\|\xi\|^5}E^{\alpha\beta \gamma}(x)\, \nabla_\alpha \operatorname{Ric}_{\beta}{}^\rho(x)\, \xi_\gamma\xi_\rho\,.
\end{equation}

A careful analysis of the leading singularity of the integral kernel $\mathfrak{a}$ of the operator $A$ --- essentially encoded within \eqref{Aprin} --- allowed us to prove the following result in \cite{curl}.

\begin{theorem}[{\cite[Theorem 1.6]{curl}}]
\label{main theorem 3 from curl}
The integral kernel $\mathfrak{a}(x,y)$ of the asymmetry operator $A$ is a bounded function, smooth outside the diagonal. Furthermore, for any $x\in M$ the limit
\begin{equation}
\label{regularised local trace of curl}
\psi_{\operatorname{curl}}^\loc(x)
:=
\lim_{r\to0^+}
\frac{1}{4\pi r^2}
\int_{\mathbb{S}_r(x)}
\mathfrak{a}(x,y)\,\dr S_y
\end{equation}
exists and defines a continuous scalar function $\,\psi_{\operatorname{curl}}^\loc:M\to\mathbb{R}\,$. Here $\mathbb{S}_r(x)=\{y\in M|\dist(x,y)=r\}$ is the sphere of radius $r$ centred at $x$
and $\dr S_y$ is the surface area element on this sphere.
\end{theorem}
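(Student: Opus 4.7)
The plan is to work locally around an arbitrary point $x_0\in M$ and analyse the oscillatory-integral representation of $\mathfrak{a}(x,y)$. Because $A\in\Psi^{-3}$ is a classical pseudodifferential operator, its Schwartz kernel is automatically $C^\infty$ off the diagonal, so the only issue is the behaviour as $y\to x$. Modulo a smooth contribution, write
\[
\mathfrak{a}(x,y)=\frac{1}{(2\pi)^3\rho(y)}\int e^{i(x-y)^\gamma\xi_\gamma}\,a(x,\xi)\,\dr\xi,
\]
with $a\sim a_{-3}+a_{-4}+\dots$ and $a_{-3}=A_\prin$ given by \eqref{Aprin}. Splitting $a=a_{-3}+r$ with $r$ a classical symbol of order $-4$, the remainder operator has a continuous integral kernel (a symbol of order $-4$ in dimension $d=3$ Fourier transforms to a $C^0$ function) and therefore contributes a bounded, continuous term to $\mathfrak{a}$ that behaves well under both tests in the statement. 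The whole problem thus reduces to controlling the principal-symbol contribution $\mathfrak{a}_{-3}$.

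For $\mathfrak{a}_{-3}$ I would freeze coefficients at $x$, pass to Riemannian normal coordinates $v\in T_xM\cong\mathbb{R}^3$ centred at $x$, and read off the kernel from the inverse Fourier transform on $\mathbb{R}^3$ of the homogeneous symbol $\xi\mapsto A_\prin(x,\xi)$. This symbol is positively homogeneous of degree $-3$ and, since $\|\xi\|^5$ and $\xi_\gamma\xi_\rho$ are both invariant under $\xi\mapsto-\xi$, even in $\xi$. The standard theory of homogeneous distributions of critical degree then forces the transform to take the form $\Phi(x,\hat v)+c(x)\log|v|$ on a punctured neighbourhood of the origin, with $\hat v=v/|v|$, $\Phi(x,\cdot)\in C^\infty(S^{2})$, and $c(x)$ an explicit universal multiple of the spherical mean $\int_{\|\xi\|=1}A_\prin(x,\xi)\,\dr\sigma_g(\xi)$. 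The decisive algebraic step is that $c(x)\equiv 0$: using the isotropy identity $\int_{\|\xi\|=1}\xi_\gamma\xi_\rho\,\dr\sigma_g(\xi)\propto g_{\gamma\rho}(x)$, the mean of \eqref{Aprin} collapses to a multiple of the scalar $E^{\alpha\beta\gamma}(x)\,\nabla_\alpha\Ric_{\beta\gamma}(x)$, which vanishes by antisymmetry of $E$ in $\beta,\gamma$ against the symmetry of $\Ric_{\beta\gamma}$. Hence the logarithm disappears and $\mathfrak{a}_{-3}$ is bounded, with a direction-dependent limit $\Phi(x,\hat v)$ along $v\to 0$.

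Combining the two pieces, $\mathfrak{a}(x,y)$ is bounded on $M\times M$ and smooth off the diagonal, and as $y=\exp_x(v)\to x$ it equals $\Phi(x,\hat v)$ plus a term admitting a direction-independent limit. Transporting the surface integral in \eqref{regularised local trace of curl} to the unit sphere $S_xM\subset T_xM$ via the exponential map (with Jacobian $r^2(1+O(r^2))$), the spherical average converges as $r\to 0^+$ to the mean of $\Phi(x,\cdot)$ over $S_xM$ plus the on-diagonal value of the continuous remainder; both terms depend smoothly on $x$, yielding continuity of $\psi^\loc_\curl$. The principal obstacle, as I see it, is the bookkeeping needed to pass from the flat-model Fourier computation back to the curved, coordinate-invariant picture: one must check that the curvature corrections to normal coordinates, the Jacobian factors coming from $\rho(y)$ and from the exponential map, and the tail of the symbolic expansion all feed into terms that are either continuous across the diagonal or vanish after spherical averaging. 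The algebraic heart of the argument, however, is the single identity $E^{\alpha\beta\gamma}\nabla_\alpha\Ric_{\beta\gamma}=0$, which makes the potentially offending logarithm cancel.
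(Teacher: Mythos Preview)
Your proposal is correct and follows essentially the same route as the paper (more precisely, the same route as \cite{curl}, from which the theorem is quoted, and which is reproduced in parameter-dependent form in Sections~\ref{The leading singularity of the integral kernel of As}--\ref{Proof of Theorem main result} here). The decisive step in both arguments is identical: the spherical mean of $A_\prin(x,\xi)$ over $\{\|\xi\|=1\}$ vanishes because $E^{\alpha\beta\gamma}\nabla_\alpha\Ric_{\beta\gamma}=0$, which kills the potential logarithmic singularity and leaves a bounded, degree-zero homogeneous kernel whose spherical average is well defined.

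The only stylistic difference is that you invoke the abstract theory of Fourier transforms of homogeneous distributions of critical degree $-d$, whereas the paper makes this computation explicit by writing down a concrete \emph{reference operator} $\operatorname{Ref}^{(s)}$ (Definition~\ref{definition of the reference operator}) whose integral kernel \eqref{The reference operator equation 2} is, by construction, the inverse Fourier transform of the principal symbol up to a scalar factor (Theorem~\ref{principal symbol of the reference operator}), and then verifies directly in Proposition~\ref{average of integral of the reference operator over a geodesic sphere} that its spherical average vanishes --- via exactly the same symmetry argument you use. Subtracting the reference operator from $A$ lowers the order to $-4$, giving a continuous kernel, which is your ``remainder $r$'' step. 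So the two presentations are equivalent; yours is slightly more conceptual, the paper's slightly more hands-on and better suited to tracking uniformity in the parameter $s$ needed later.
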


\begin{definition}
\label{definition regularised local and global trace}
We call the function $\psi_{\operatorname{curl}}^\loc: M \to \mathbb{R}$ the \emph{regularised local trace} of $\curl$ and the number
\begin{equation*}
\label{regularised global trace of curl}
\psi_{\operatorname{curl}}:=\int_M \psi_{\operatorname{curl}}^\loc(x) \,\rho(x)\,\dr x
\end{equation*}
the \emph{regularised global trace} of $\curl$.
\end{definition}
These two quantities are geometric invariants, in that they are determined by the Riemannian 3-manifold and its orientation. In particular, the global regularised trace is a real number measuring the asymmetry of the spectrum of $\curl$.

\

In \cite{curl} we claimed that $\psi_{\operatorname{curl}}$ is precisely the classical eta invariant and that $\psi_{\operatorname{curl}}^\loc$ is the local eta invariant, only defined in a purely analytic fashion, via microlocal analysis. The main goal of this paper is to prove this claim.

\begin{remark}
We should point out that there are many different approaches to the subject of spectral asymmetry existing in the literature. It is worth mentioning, for instance, the one relying on a local invariant known as Wodzicki residue \cite{wodzicki}, see also \cite{bruning,loya} for further details, later adopted, in various guises, by various authors, including Guillemin, Dixmier, and Connes.

Let us emphasise that our approach is distinct from and does not rely on the Wodzicki--Guillemin--Dixmier--Connes construction. As explained earlier in this section, our underlying idea is to take the matrix trace first, which leads to a massive regularisation reducing the order of the pseudodifferential operator by 3, thus
resulting in what we call the asymmetry operator. This idea can be deployed, in a user-friendly manner, in a variety of physically meaningful scenarios, see, e.g., \cite{dirac_asymmetry}. 

Also note that the asymmetry operator is no longer a projection/idempotent, so
 Wodzicki-type results don't immediately apply in our case. 
\end{remark}

\vspace{1cm}

\subsection*{Notation}
\addcontentsline{toc}{subsection}{Notation}

\begin{longtable}{l l}
\hline
\\ [-1em]
\multicolumn{1}{c}{\textbf{Symbol}} & 
  \multicolumn{1}{c}{\textbf{Description}} \\ \\ [-1em]
 \hline \hline \\ [-1em]
$\sim$ & Asymptotic expansion \\ \\ [-1em]
$\ast$ & Hodge dual \\ \\ [-1em]
$\|\,\cdot\,\|$ & Riemannian norm \\ \\ [-1em]
$|\,\cdot\,|$ & Euclidean norm \\ \\ [-1em]
$A$ & Asymmetry operator, Definition~\ref{definition asymmetry operator} \\ \\ [-1em]
$A^{(s)}$ & Parameter-dependent asymmetry operator, Definition~\ref{definition parameter dependent asymmetry operator} \\ \\ [-1em]
$A^{(s)}_\mathrm{diag}$, $A^{(s)}_\mathrm{pt}$ & Local decomposition of $A^{(s)}$ as per~\eqref{As diag} and~\eqref{As pt} \\ \\ [-1em]
$\mathfrak{a}(x,y)$ & Integral kernel of the asymmetry operator $A$ \\ \\ [-1em]$\mathfrak{a}^{(s)}(x,y)$ & Integral kernel of the asymmetry operator $A$ \\ \\ [-1em]
$\curl$ & The operator curl \eqref{definition curl} \\ \\ [-1em]
$d$ & Dimension of the manifold $M$, $d\ge2$\\ \\ [-1em]
$\dr$ & Exterior derivative \\ \\ [-1em]
$\delta$ & Codifferential \\ \\ [-1em]
$\Delta:=-\delta\dr$ & (Nonpositive) Laplace--Beltrami operator \\ \\ [-1em]
$\boldsymbol{\Delta}:=-(\dr\delta+\delta\dr)$ & (Nonpositive) Hodge Laplacian\\ \\ [-1em]
$\dist$ & Geodesic distance \\ \\ [-1em]
$\varepsilon_{\alpha\beta\gamma}$ & Totally antisymmetric symbol, $\varepsilon_{123}=+1$ \\ \\ [-1em]
$E_{\alpha\beta\gamma}$ & Totally antisymmetric tensor \eqref{Levi-Civita tensor} \\ \\ [-1em]
$f_{x^\alpha}$ & Partial derivative of $f$ with respect to $x^\alpha$ \\ \\ [-1em]
$g$ & Riemannian metric \\ \\ [-1em]
$\Gamma^\alpha{}_{\beta\gamma}$ & Christoffel symbols\\ \\ [-1em]
$\eta_Q^\loc(x;s)$ & Local eta function of the operator $Q$ (formula \eqref{local eta function for curl} for $Q=\curl$) \\ \\ [-1em]
$\eta_Q(s)$ & Eta function of the operator $Q$ (formula \eqref{global eta function for curl} for $Q=\curl$) \\ \\ [-1em]
$H^s(M)$ & Generalisation of the usual Sobolev spaces $H^s$ to differential forms \\ \\ [-1em]
$\mathcal{H}^k(M)$ & Harmonic $k$-forms over $M$ \\ \\ [-1em]
$(\lambda_j, u_j)$, $j=\pm1, \pm2, \ldots$ & Eigensystem for $\curl$ \\ \\ [-1em]
$\theta$ & Heaviside theta function \\ \\ [-1em]
$\operatorname{I}$ & Identity matrix \\ \\ [-1em]
$\operatorname{Id}$ & Identity operator \\ \\ [-1em]
$(\mu_j, f_j)$, $j=0,1,2,\ldots$ & Eigensystem for $-\Delta$ \\ \\
[-1em]
$M$ & Connected closed oriented Riemannian manifold\\ \\ [-1em]
$\operatorname{mod} \ \Psi^{-\infty}$ & Modulo an integral operator with infinitely smooth kernel \\ \\ [-1em]
$P_0$, $P_\pm$ & Orthogonal projections~\eqref{definition P0} and~\eqref{definition Pplus}, \eqref{definition Pminus} \\ \\ [-1em]
$p_\pm(x,y)$ & Full symbol of $P_\pm$ \\ \\ [-1em]
$Q_\prin$ & Principal symbol of the pseudodifferential operator $Q$ \\ \\ [-1em]
$Q_{\sub}$ & Subprincipal symbol of $Q$, for operators on 1-forms see \cite[Definition~3.2]{curl}  \\ \\ [-1em]
$R_\lambda$  & Resolvent $(-\boldsymbol{\Delta}-\lambda \mathrm{I})^{-1}$ of $-\boldsymbol{\Delta}$  \\ \\ [-1em]
$\mathrm{Ref}^{(s)}$  & Reference operator, Definition~\ref{definition of the reference operator}  \\ \\ [-1em]
$\mathfrak{ref}^{(s)}(x,y)$  & Scalar function~\eqref{The reference operator equation 2} and distribution~\eqref{distribution}  \\ \\ [-1em]
$\Riem$, $\Ric$, $\Sc$ & Riemann curvature tensor, Ricci tensor, scalar curvature \\ \\ [-1em]
$\rho(x)$ & Riemannian density \\ \\ [-1em]
$\mathbb{S}_r(x)$ & Geodesic sphere of radius $r$ centred at $x\in M$\\ \\ [-1em]
$\operatorname{\mathfrak{tr}}$ & (Pointwise) matrix trace, Definition~\ref{definition matrix trace introduction} \\ \\ [-1em]
$\operatorname{Tr}$ & Operator trace  \\ \\ [-1em]
$TM$, $T^*M$ & Tangent and cotangent bundle \\ \\ [-1em]
$\Omega^k(M)$ & Differential $k$-forms over $M$ \\ \\ [-1em]
$\psi_{\curl}^\loc(x)$ & Regularised local trace of $A$, Definition~\ref{definition regularised local and global trace} \\ \\ [-1em]
$\psi_{\curl}$ & Regularised global trace of $A$, Definition~\ref{definition regularised local and global trace} \\ \\ [-1em]
$\Psi^s$ & Classical pseudodifferential operators of order $s$ \\ \\ [-1em]
$\Psi^{-\infty}$ & Infinitely smoothing operators \eqref{definition psi minus infinity} \\ \\ [-1em]
$Z$ & Parallel transport map \eqref{Q acting on 1-forms 3}\\ \\ [-1em]
\hline
\end{longtable}

\section{Main results}
\label{Main results}

The centrepiece of our paper is the following result.

\begin{theorem}
\label{Theorem main result}
The regularised local trace of the asymmetry operator coincides with the local eta invariant and the regularised global trace of the asymmetry operator coincides with the eta invariant. Namely,
\begin{equation*}
\label{Theorem main result equation 1}
\psi_{\operatorname{curl}}^\loc(x)
=
\eta_{\operatorname{curl}}^\loc(x;0)\,,
\end{equation*}
\begin{equation*}
\label{Theorem main result equation 2}
\psi_{\operatorname{curl}}
=
\eta_{\operatorname{curl}}(0)\,.
\end{equation*}
\end{theorem}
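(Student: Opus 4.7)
The plan is to introduce a parameter-dependent version of the asymmetry operator that bridges the two objects in the statement. Concretely, I would define
$$A^{(s)}:=\operatorname{\mathfrak{tr}}\bigl((P_+-P_-)\,|\!\operatorname{curl}\!|^{-s}\bigr),$$
so that $A^{(0)}=A$, and study the family $\{A^{(s)}\}$ as a holomorphic family of scalar pseudodifferential operators of order $-3-\operatorname{Re}s$. For $\operatorname{Re}s$ sufficiently large $A^{(s)}$ is trace class; by unfolding the definition \eqref{definition matrix trace introduction equation}, inserting the spectral resolutions \eqref{definition Pplus}--\eqref{definition Pminus}, and using $Z_\beta{}^\alpha(x,x)=\delta_\beta{}^\alpha$, I would obtain
$$\operatorname{Tr}(A^{(s)})=\eta_{\operatorname{curl}}(s),\qquad \mathfrak{a}^{(s)}(x,x)=\eta_{\operatorname{curl}}^\loc(x;s)\quad\text{for }\operatorname{Re}s\gg 0.$$
Thus the task reduces to analytically continuing the integral kernel of $A^{(s)}$ to a neighbourhood of $s=0$ and identifying its value on the diagonal (suitably interpreted) with the spherical average defining $\psi_{\operatorname{curl}}^\loc(x)$.

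Next, I would decompose $A^{(s)}=A^{(s)}_{\mathrm{diag}}+A^{(s)}_{\mathrm{pt}}$ by means of a smooth cutoff $\chi$ supported in a neighbourhood of the diagonal of $M\times M$. The off-diagonal piece $A^{(s)}_{\mathrm{pt}}$ has infinitely smooth kernel and depends holomorphically on $s\in\mathbb{C}$, so it contributes analytically on both sides and plays no role in the regularisation. All the delicate analysis sits in $A^{(s)}_{\mathrm{diag}}$, which I would handle by comparison with a \emph{reference operator} $\mathrm{Ref}^{(s)}$, constructed explicitly from the resolvent $R_\lambda$ of the Hodge Laplacian, whose full symbol matches that of $A^{(s)}_{\mathrm{diag}}$ up to a remainder of sufficiently low order (strictly less than $-3-\operatorname{Re}s$ in a neighbourhood of $s=0$). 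The remainder is of trace class and gives a continuous contribution to the spherical average. For the reference part, the kernel $\mathfrak{ref}^{(s)}(x,y)$ and its spherical averages can be computed in essentially closed form, so that both the meromorphic continuation in $s$ and the small-$r$ asymptotics of $\tfrac{1}{4\pi r^2}\int_{\mathbb{S}_r(x)}\mathfrak{ref}^{(s)}(x,y)\,\dr S_y$ can be extracted by direct calculation.

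The crux is then to verify, for each fixed $x\in M$, the chain of equalities
$$\lim_{r\to 0^+}\frac{1}{4\pi r^2}\int_{\mathbb{S}_r(x)}\mathfrak{a}^{(s)}(x,y)\,\dr S_y=\eta_{\operatorname{curl}}^\loc(x;s)\qquad(\operatorname{Re}s\gg 0),$$
together with joint holomorphy of both sides in a complex neighbourhood of $s=0$. The left-hand side is holomorphic near $s=0$ by the analysis of $\mathrm{Ref}^{(s)}$ plus a trace-class remainder, while the right-hand side is holomorphic near $s=0$ by the already cited results of~\cite{5terms}. Agreement on an open set in $\{\operatorname{Re}s\gg0\}$ then propagates to $s=0$ by uniqueness of analytic continuation. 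Evaluating at $s=0$ on the left gives $\psi_{\operatorname{curl}}^\loc(x)$ by Theorem~\ref{main theorem 3 from curl}, and on the right gives $\eta_{\operatorname{curl}}^\loc(x;0)$, yielding the local identity. Integration over $M$ against $\rho\,\dr x$ and Fubini then yield the global identity.

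The main obstacle I anticipate is the construction and symbolic analysis of $\mathrm{Ref}^{(s)}$: it must be built so that its symbolic expansion cancels the homogeneity-$(-3-s)$ and weaker singularities of $A^{(s)}_{\mathrm{diag}}$ \emph{uniformly in $s$} across a neighbourhood of $s=0$, while being simple enough that the spherical regularisation and the analytic continuation in $s$ can be interchanged. This is where the precise form of the principal symbol \eqref{Aprin}, and more generally the pseudodifferential framework established in~\cite{curl}, should do the heavy lifting.
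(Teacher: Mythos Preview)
Your proposal is correct and follows essentially the same strategy as the paper: introduce a parameter-dependent asymmetry operator $A^{(s)}$ with $A^{(0)}=A$, relate its diagonal kernel to $\eta_{\operatorname{curl}}^{\loc}(x;s)$ for large $s$, subtract a reference operator capturing the leading singularity so that the remainder is trace class across $s=0$, and then use analyticity in $s$ to propagate the identity down to $s=0$.

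Two points of execution differ from the paper and are worth flagging. First, the paper's reference operator $\mathrm{Ref}^{(s)}$ is \emph{not} built from the resolvent; it is written down directly as an explicit geometric integral kernel (distance function, inverse exponential map, and $\nabla\operatorname{Ric}$) designed so that its principal symbol matches that of $A^{(s)}$ and, crucially, so that its spherical average over $\mathbb{S}_r(x)$ vanishes identically by a simple symmetry argument. This makes the spherical regularisation of $A^{(s)}-\mathrm{Ref}^{(s)}$ transparent, whereas a resolvent-based construction would leave that verification as a nontrivial extra step. Second, the paper's decomposition $A^{(s)}=A^{(s)}_{\mathrm{diag}}+A^{(s)}_{\mathrm{pt}}$ is not a near-diagonal/off-diagonal cutoff (that cutoff is already built into the definition of $\operatorname{\mathfrak{tr}}$); rather, in geodesic normal coordinates it separates the naive matrix trace $t_\alpha{}^\alpha$ from the parallel-transport correction, and the latter is shown to contribute nothing at the relevant symbolic orders at the centre point. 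Your diagonal-cutoff split is harmless but redundant.
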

This completes the analysis initiated in \cite{curl} and provides a new approach to the study of spectral asymmetry of non-semibounded (pseudo)\-differential systems on manifolds which
possesses the following main elements of novelty.
\begin{itemize}
\item 
We characterise asymmetry of the spectrum in terms of a pseudodifferential operator of negative order, as opposed to a single number. The classical geometric invariants can be recovered by computing the local and global regularised operator traces of the asymmetry operator. In this sense, our approach encompasses and extends previous results.

\item
The overarching idea is not specific to $\curl$, but can be deployed for a variety of operators. For example, we will be applying it to the massless Dirac operator in a separate paper.

\item
Our construction is direct, in the sense that it does not involve analytic continuation, and is based on the use of pseudodifferential techniques and explicit computations.

\item
It implements in a rigorous fashion the intuitive understanding of spectral asymmetry as the difference between the number of positive and negative eigenvalues.

\end{itemize}

\

The strategy for proving Theorem~\ref{Theorem main result} is as follows.

\

Let us introduce a one-parameter family of pseudodifferential operators defined as follows.

\begin{definition}
\label{definition parameter dependent asymmetry operator}
Let $s$ be a real number. We define the \emph{parameter-dependent asymmetry operator} as
\begin{equation}
\label{definition parameter dependent asymmetry operator equation 1}
A^{(s)}:=\operatorname{\mathfrak{tr}}\left[(P_+-P_-) (-\boldsymbol{\Delta})^{-s/2}\right]\,,
\end{equation}
where $\boldsymbol{\Delta}$ is the (nonpositive) Hodge Laplacian acting on 1-forms.
\end{definition}

\emph{Prima facie}, for a given $s\in \mathbb{R}$ the operator \eqref{definition parameter dependent asymmetry operator equation 1} is a pseudodifferential operator of order $-s$. Furthermore, it is not hard to to see that for $s>3$ the operator $A^{(s)}$ is of trace class, and that its operator trace is the eta function $\eta_{\curl}(s)$ --- full justification for these claims will be provided in Section~\ref{The parameter-dependent asymmetry operator}.

A deeper analysis reveals that, very much like the asymmetry operator \eqref{definition asymmetry operator equation}, $A^{(s)}$ enjoys higher smoothing properties than initially expected.

\begin{theorem}
\label{Theorem order of A(s)}

\phantom{a}
\begin{enumerate}[(a)]
\item
The operator $A^{(s)}$ is a self-adjoint pseudodifferential operator of order $-s-3$.

\item
The principal symbol of the operator $A^{(s)}$ reads
\begin{equation}
\label{Theorem order of A(s) equation 1}
(A^{(s)})_\mathrm{prin}(x,\xi)
=
-\frac{(s+1)(s+3)}{6\,\|\xi\|^{s+5}}\,
E^{\alpha\beta \gamma}(x)\,
\nabla_\alpha \operatorname{Ric}_{\beta}{}^\rho(x)\, \xi_\gamma\xi_\rho
\,.
\end{equation}
\end{enumerate}
\end{theorem}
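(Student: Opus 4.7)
The proof is a structured symbol calculation that parallels and extends the analysis carried out in \cite{curl} for $s=0$ (note that $A^{(0)}=A$).

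\textbf{Self-adjointness and initial order.} Self-adjointness of $A^{(s)}$ is immediate: $P_+-P_-$ and $(-\boldsymbol{\Delta})^{-s/2}$ are self-adjoint, they commute (on coexact $1$-forms both are functions of $\curl$, using $\curl^2=-\boldsymbol{\Delta}$ on that subspace, and $P_+-P_-$ vanishes on the orthogonal complement), and the matrix trace of Definition~\ref{definition matrix trace introduction} preserves self-adjointness. A priori $A^{(s)}\in\Psi^{-s}$; the content of part~(a) is that three orders of regularity are gained through cancellations upon taking the matrix trace.

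\textbf{Symbol assembly.} I would begin by writing out the full symbols of both factors. The full symbol of $P_+-P_-$ was constructed in \cite{curl} via the algorithm of \cite[Section~4.3]{part1}; its principal component equals $\curl_\prin(x,\xi)/\|\xi\|$, and the subsequent homogeneous components are explicit polynomial expressions in $\curl_\prin$, $\|\xi\|^{-1}$ and covariant derivatives of the curvature. The full symbol of $(-\boldsymbol{\Delta})^{-s/2}$ is obtained from Seeley's construction for complex powers of an elliptic operator of order~$2$: its principal part is the scalar multiple $\|\xi\|^{-s}\operatorname{I}$, and its lower homogeneous parts have coefficients that are polynomial in $s$ and built from Christoffel symbols together with the Ricci and Riemann tensors. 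I would assemble these into the homogeneous components $\bigl[(P_+-P_-)(-\boldsymbol{\Delta})^{-s/2}\bigr]_{-s-k}$, for $k=0,1,2,3$, via the covariant pseudodifferential product, and then apply the pointwise matrix trace of Definition~\ref{definition matrix trace introduction}, whose parallel-transport factor contributes additional curvature-dependent terms starting at order $-s-2$.

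\textbf{Triple vanishing.} The crux of part~(a) is verifying that the matrix traces at orders $-s$, $-s-1$, $-s-2$ vanish identically. The vanishing at order $-s$ follows from $\operatorname{tr}(\curl_\prin)=0$, which is just the total antisymmetry of $E$. At orders $-s-1$ and $-s-2$ one faces a sum of individually nonzero contributions coming from the subleading components of $P_+-P_-$, from the Seeley coefficients of $(-\boldsymbol{\Delta})^{-s/2}$, from cross-terms in the composition formula, and from the parallel-transport correction in the matrix trace. The same algebraic identities between the subleading symbols $p_\pm$ and the curvature of $M$ that produced the cancellations for $s=0$ in \cite[Theorem~1.4]{curl} must be shown to continue working in the presence of the additional $s$-dependent Seeley contributions (which were trivially absent at $s=0$ since $(-\boldsymbol{\Delta})^0=\operatorname{Id}$). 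Concretely, I would organise each of the $-s-1$ and $-s-2$ components as a polynomial in $s$ whose coefficients are tensor expressions, and verify that each tensor coefficient vanishes separately.

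\textbf{Principal symbol and the main obstacle.} Once the triple vanishing is in hand, the order-$(-s-3)$ component is $(A^{(s)})_\prin$. Its tensorial structure is constrained by natural geometric invariance, positive homogeneity $-s-3$ in $\xi$, and linearity in a single derivative of curvature, which together force it to be a polynomial-in-$s$ scalar multiple of $\|\xi\|^{-s-5}E^{\alpha\beta\gamma}\nabla_\alpha\Ric_\beta{}^\rho\,\xi_\gamma\xi_\rho$. The explicit prefactor $-(s+1)(s+3)/6$ is then read off from the surviving terms of the symbolic calculation, and a useful consistency check is that at $s=0$ one recovers $-1/2$ and hence \eqref{Aprin}. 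The main obstacle is precisely the triple vanishing at orders $-s-1$ and $-s-2$: the bookkeeping is heavy, and it is a nontrivial fact that the extra $s$-dependent Seeley contributions interact with the cancellations of \cite{curl} in such a way as to preserve them. Once this is established, the identification of the prefactor $(s+1)(s+3)/6$ is a direct by-product of tracking the polynomial-in-$s$ coefficients through the same computation.
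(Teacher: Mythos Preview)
Your outline is plausible in principle but misses the paper's central organisational simplification, which changes the character of the calculation substantially. Rather than composing the pseudodifferential operator $P_+-P_-$ with $(-\boldsymbol{\Delta})^{-s/2}$, the paper first rewrites (Lemma~\ref{lemma basic properties As}(a))
\[
A^{(s)}=\operatorname{\mathfrak{tr}}\bigl[\curl\,(-\boldsymbol{\Delta})^{-(s+1)/2}\bigr],
\]
using $P_+-P_-=\curl\,(-\boldsymbol{\Delta})^{-1/2}$. This replaces one factor by a \emph{differential} operator whose full symbol equals its principal symbol and is linear in $\xi$; consequently the composition formula~\eqref{5 November 2021 equation 1} terminates after two terms (see~\eqref{symbol curl Delta -s-1/2}), and the entire burden shifts to a handful of homogeneous components of the symbol of a single complex power $(-\boldsymbol{\Delta})^{-(s+1)/2}$. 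Your route keeps both factors genuinely pseudodifferential, so the composition and the interplay with the subleading-symbol algorithm for $P_\pm$ are much heavier; the assertion that ``the same algebraic identities \dots\ continue working'' in the presence of the Seeley terms, while believable, would require checking considerably more cross-terms than the paper ever confronts.

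Beyond this, the paper's execution differs from yours in three further respects worth noting: (i) the required components of the symbol of $(-\boldsymbol{\Delta})^{-(s+1)/2}$ are obtained by contour integration against the resolvent symbol (formulae~\eqref{complex power of Delta to appropriate power}--\eqref{symbol Rlambda} together with Lemma~\ref{lemma residues}), and it is precisely these residues that produce the polynomial prefactor $(s+1)(s+3)$ cleanly; (ii) the matrix trace is split locally as $A^{(s)}=A^{(s)}_{\mathrm{diag}}+A^{(s)}_{\mathrm{pt}}$, with Lemma~\ref{lemma Apt} disposing of the parallel-transport piece wholesale at all relevant orders; (iii) for the order $-s-3$ computation the paper works at a point where curvature (but not its covariant derivative) vanishes, which prunes many terms from the outset. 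Your approach would ultimately reach the same formula, but the paper's representation via $\curl$ is what makes the triple vanishing transparent and lets the coefficient $-(s+1)(s+3)/6$ fall out of a residue calculation rather than an order-by-order polynomial-in-$s$ bookkeeping.
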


\begin{remark}
\label{remark about -1 and -3}
Formula~\eqref{Theorem order of A(s) equation 1}
tells us that the principal symbol of the operator $A^{(s)}$ vanishes at $s=-1$ and $s=-3$.
In fact, a stronger result is true: the operator $A^{(s)}$ itself vanishes at $s=-1$ and $s=-3$.
Indeed, according to formulae
\eqref{alternative representation for As}
and
\eqref{tth power of Hodge Laplacian}
we have $\,A^{(-k)}=\operatorname{\mathfrak{tr}}(\curl^k)\,$ for odd natural $k$,
whereas
Propositions~\ref{proposition trace curl} and \ref{proposition trace curl cubed}
tell us that $\,\operatorname{\mathfrak{tr}}\curl=\operatorname{\mathfrak{tr}}(\curl^3)=0\,$.
\end{remark}

Theorem~\ref{Theorem order of A(s)} part (a) allows us to extend our earlier claim about the operator $A^{(s)}$ being trace class from $s>3$ all the way up to $s>0$.

\begin{theorem}
\label{theorem trace A(s)}
Let $\mathfrak{a}^{(s)}(x,y)$ be the integral kernel of $A^{(s)}$. For $s>0$ we have
\begin{equation*}
\label{Theorem dealing with A(s) equation 1}
\mathfrak{a}^{(s)}(x,x)=\eta_{\curl}^\loc(x;s)
\end{equation*}
and
\begin{equation*}
\label{Theorem dealing with A(s) equation 2}
\operatorname{Tr}A^{(s)}=\eta_{\operatorname{curl}}(s)\,.
\end{equation*}
\end{theorem}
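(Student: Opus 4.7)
The plan is to reduce the two claims to a single pointwise identification of the diagonal of the kernel of $A^{(s)}$ with the local eta function, using the trace class property provided by Theorem~\ref{Theorem order of A(s)}(a), and then to establish this identification via a direct spectral expansion supplemented by a holomorphy-in-$s$ argument in the subcritical range.

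\emph{Reduction step.} By Theorem~\ref{Theorem order of A(s)}(a) the scalar pseudodifferential operator $A^{(s)}$ has order $-s-3<-d=-3$ for every $s>0$, so standard pseudodifferential theory on compact manifolds gives that $A^{(s)}$ is of trace class with continuous Schwartz kernel $\mathfrak{a}^{(s)}(x,y)$, and
\[
\operatorname{Tr}A^{(s)}=\int_M \mathfrak{a}^{(s)}(x,x)\,\rho(x)\,\dr x\,.
\]
Combined with the relation between local and global eta functions, this reduces both claims of the theorem to the single pointwise identity $\mathfrak{a}^{(s)}(x,x)=\eta_{\curl}^\loc(x;s)$.

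\emph{Spectral expansion.} On 1-forms in dimension $3$ one has $\curl^2=*\dr*\dr=\delta\dr$, and since every eigenform $u_j$ is coexact, $\boldsymbol{\Delta}u_j=-\delta\dr u_j=-\curl^2 u_j=-\lambda_j^2 u_j$; hence $(-\boldsymbol{\Delta})^{-s/2}u_j=|\lambda_j|^{-s}u_j$. Combining this with \eqref{definition Pplus}--\eqref{definition Pminus} yields the spectral representation
\[
(P_+-P_-)(-\boldsymbol{\Delta})^{-s/2}=\sum_{j\in\mathbb{Z}\setminus\{0\}}\operatorname{sgn}(\lambda_j)\,|\lambda_j|^{-s}\,u_j\,\langle u_j,\,\cdot\,\rangle\,,
\]
with distributional Schwartz kernel $\sum_j \operatorname{sgn}(\lambda_j)\,|\lambda_j|^{-s}\,[u_j]_\alpha(x)\,[u_j]^\beta(y)$. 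Applying the pointwise matrix trace --- noting that $Z(x,x)=\operatorname{I}$ and $\chi(0)=1$, so the parallel transport and cutoff act trivially on the diagonal --- produces, at least formally,
\[
\mathfrak{a}^{(s)}(x,x)=\sum_{j\in\mathbb{Z}\setminus\{0\}}\operatorname{sgn}(\lambda_j)\,|\lambda_j|^{-s}\,*(u_j\wedge *u_j)(x)=\eta_{\curl}^\loc(x;s)\,.
\]

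\emph{Justification and main obstacle.} For $s>3$ the above series converges absolutely and uniformly by Weyl's law combined with standard sup-norm estimates for the eigenforms, so the identity on the diagonal holds pointwise with no further work, and the global statement follows by integrating against $\rho\,\dr x$ and invoking the definition of $\eta_{\curl}(s)$. For the remaining range $0<s\leq 3$ the series converges only conditionally, and a more delicate argument is required: I would argue that $s\mapsto\mathfrak{a}^{(s)}(x,x)$ depends holomorphically on $s$ (via a parameter-dependent version of the pseudodifferential construction underpinning Theorem~\ref{Theorem order of A(s)}(a)), while $s\mapsto\eta_{\curl}^\loc(x;s)$ is known from \cite{5terms} to be holomorphic in $\operatorname{Re}s>-1$. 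Agreement on $(3,+\infty)$ then extends to the entire range $s>0$ by uniqueness of holomorphic continuation in the parameter $s$. The main obstacle is precisely the first half of this last step, namely establishing the holomorphic dependence of the diagonal kernel $\mathfrak{a}^{(s)}(x,x)$ on $s$; this should follow from a careful parameter-dependent refinement of the symbolic calculus that underlies Theorem~\ref{Theorem order of A(s)}(a).
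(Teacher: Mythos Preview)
Your proposal is correct and follows essentially the same route as the paper: the paper establishes the identity $\mathfrak{a}^{(s)}(x,x)=\eta_{\curl}^{\loc}(x;s)$ for $s>3$ by the spectral expansion (Section~\ref{From As to eta}), then in Section~\ref{Proof of Theorem main result} argues that $s\mapsto\mathfrak{a}^{(s)}(x,x)$ is real analytic on $(0,+\infty)$ (since $s$ enters the construction of $A^{(s)}$ analytically, in both the full symbol and the smoothing remainder) and invokes uniqueness of analytic continuation to extend the identity to all $s>0$. Your identification of the ``main obstacle'' --- analytic dependence of the diagonal kernel on $s$ --- is exactly the point the paper addresses in Observation~2 of Section~\ref{Proof of Theorem main result}, with a comparably brief justification.
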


\begin{remark}
Note that $s\in(0,+\infty)$ is the maximal interval in which $A^{(s)}$ is of trace class. Indeed, $A^{(0)}=A$ is \emph{not} of trace class in general, as demonstrated in \cite[Sections~6 and~7]{curl}.
\end{remark}

\

Next, one observes that for $s=0$ the operator $A^{(s)}$ turns into the asymmetry operator $A$ --- cf.~\eqref{definition asymmetry operator equation} and~\eqref{definition parameter dependent asymmetry operator equation 1}. Furthermore, comparing
 \eqref{Aprin} with~\eqref{Theorem order of A(s) equation 1} we see that
\begin{equation*}
\label{Asprin vs Aprin}
(A^{(s)})_\mathrm{prin}
=
f(s)\,A_\prin
\qquad
\text{with}
\qquad
f(s)=
\frac{(s+1)(s+3)}{3\,\|\xi\|^s}
\,.
\end{equation*}

Proving Theorem~\ref{theorem trace A(s)}, which in turn will imply Theorem~\ref{Theorem main result}, requires one to carefully examine the behaviour of the integral kernel of $A^{(s)}$ as $s\to 0^+$. Let us emphasise that there are several nontrivial (and somewhat subtle) obstacles one needs to overcome in order to achieve this. In particular, one has to find a way to ``follow the singularity'' of $\mathfrak{a}^{(s)}$ up to $s=0^+$ in such a way that the error terms brought about by the microlocal approach do not grow in an uncontrolled fashion when the parameter $s$ becomes smaller and smaller.

\subsection*{Structure of the paper}
\addcontentsline{toc}{subsection}{Structure of the paper}

Our paper is structured as follows.

In Section~\ref{The matrix trace of an operator, revisited} we revisit the notion of matrix trace of an operator, building on the analysis from \cite[Section~4]{curl}. In particular, we provide stronger results for differential (as opposed to pseudodifferential) matrix operators.

In Section~\ref{The parameter-dependent asymmetry operator} we perform a detailed examination of the parameter-dependent asymmetry operator $A^{(s)}$: we discuss its basic properties, prove that it is a pseudodifferential operator of order $-s-3$, and compute its principal symbol.

Section~\ref{From As to eta} establishes a precise relationship between the operator $A^{(s)}$ and the local and global eta functions of the operator curl. This prepares the ground for Section~\ref{The leading singularity of the integral kernel of As}, which features a careful analysis of the leading singularity of the integral kernel of $A^{(s)}$. The latter is achieved by defining a reference operator which captures the leading singularity and underpins our notion of (regularised) trace for $A^{(s)}$.

Finally, in Section~\ref{Proof of Theorem main result} we give the proof of our main result, Theorem~\ref{Theorem main result}

Our paper is complemented by a section with concluding remarks, Section~\ref{Concluding remarks}, and three appendices with auxiliary technical material.

\section{The matrix trace of an operator, revisited}
\label{The matrix trace of an operator, revisited}

In this section we will briefly summarise and revisit the notion of \emph{matrix trace} of a pseudodifferential operator acting on 1-forms introduced in \cite[Section~4]{curl}. We refer the reader to \cite[Section~4]{curl} for further details and a broader discussion.

\

Consider a pseudodifferential operator
\begin{equation*}
\label{Q acting on 1-forms 1bis}
Q: u_\alpha(x)\mapsto\int_M \mathfrak{q}_\alpha{}^\beta(x,y)\,u_\beta(y)\,\rho(y)\,\dr y
\end{equation*}
of order $s$ acting on 1-forms, with scalar integral kernel $\mathfrak{q}_\alpha{}^\beta$
(Schwartz kernel).

There are two natural notions of trace associated with $Q$. The first notion is that of \emph{operator trace} $\operatorname{Tr}Q$, which is well defined when the operator is of trace class. This is the case if $s<-d$
and the operator is self-adjoint.
The second notion is that of \emph{matrix trace} $\operatorname{\mathfrak{tr}} Q$, introduced in Definition~\ref{definition matrix trace introduction}. The latter produces a \emph{scalar} pseudodifferential operator of the same order as the original operator~$Q$.

It was shown in \cite[Section~4]{curl} that the matrix trace satisfies the following properties.
\begin{enumerate}[(i)]
\item
The operator $\operatorname{\mathfrak{tr}} Q$ is defined uniquely, modulo the addition of a scalar integral operator whose integral kernel is infinitely smooth and vanishes in a neighbourhood of the diagonal.

\item
$(\operatorname{\mathfrak{tr}}Q)^*=\operatorname{\mathfrak{tr}}(Q^*)$, where the star refers to formal adjoints with respect to the natural inner products.
In particular, if $Q$ is self-adjoint, then so is $\operatorname{\mathfrak{tr}}Q$.

\item
If $s<-d$ and the operator $Q$ is self-adjoint then
\begin{equation}
\label{properties of matrix trace 1}
\operatorname{Tr}(\operatorname{\mathfrak{tr}} Q)=\operatorname{Tr}Q\,.
\end{equation}

\item
We have
\begin{equation}
\label{prin of tr is tr of prin}
(\operatorname{\mathfrak{tr}} Q)_\prin= \operatorname{tr} Q_\prin,
\end{equation}
\begin{equation}
\label{sub of tr is tr of sub}
(\operatorname{\mathfrak{tr}} Q)_\sub= \operatorname{tr} Q_\sub\,.
\end{equation}

\item
The quantities on the left-hand sides of \eqref{properties of matrix trace 1}--\eqref{sub of tr is tr of sub} are independent of the parameter $\epsilon$ appearing in formula \eqref{Q acting on 1-forms 6}.

\end{enumerate}

When $Q:u_\alpha\mapsto Q_\alpha{}^\beta\,u_\beta$ is a differential operator acting on 1-forms,
one has a stronger result. In order to formulate this result,
let us write down the operator $Q$ in local coordinates:
\begin{equation}
\label{differential operator Q in local coordinates}
Q_\alpha{}^\beta=
\sum_{|\boldsymbol{\kappa}|\le m}
[q^{\boldsymbol{\kappa}}]_\alpha{}^\beta(x)\,
\frac{\partial^{|\boldsymbol{\kappa}|}}{\partial x^{\boldsymbol{\kappa}}}
\,,
\end{equation}
where $m$ is the order of the differential operator,
$\boldsymbol{\kappa}=(\kappa_1, \dots,\kappa_d)\in\mathbb{N}_0^d$ is a multi-index,
$|\boldsymbol{\kappa}|=\sum_{j=1}^d \kappa_j$
and
$
\displaystyle
\frac{\partial^{|\boldsymbol{\kappa}|}}{\partial x^{\boldsymbol{\kappa}}}
=
\frac{\partial^{|\boldsymbol{\kappa}|}}
{\partial(x^1)^{\kappa_1}\dots\partial(x^d)^{\kappa_d}}
\,$.
Then introduce another differential operator
\begin{equation}
\label{differential operator R in local coordinates}
R_\alpha{}^\beta(x):=
\sum_{|\boldsymbol{\kappa}|\le m}
[q^{\boldsymbol{\kappa}}]_\alpha{}^\beta(x)\,
\frac{\partial^{|\boldsymbol{\kappa}|}}{\partial y^{\boldsymbol{\kappa}}}
\end{equation}
which acts in the variable $y$,
whereas the variable $x$ plays the role of parameter.

Recall that
\begin{equation}
\label{Q acting on 1-forms 3}
Z: T_xM\ni u^\alpha\mapsto u^\alpha\,Z_\alpha{}^\beta(x,y)\in T_yM
\end{equation}
is the linear map realising parallel transport of vectors from $x$ to $y$ along the unique shortest geodesic connecting $x$ and $y$. In what follows, we raise and lower indices in the 2-point tensor $Z_\alpha{}^\beta(x,y)$ using the Riemannian metric $g(x)$ in the first index and $g(y)$ in the second.

Recall also the identity \cite[formula~(4.9)]{curl}
\begin{equation}
\label{aux4}
Z_\beta{}^\alpha(y,x)
=
Z^\alpha{}_\beta(x,y).
\end{equation}

\begin{proposition}
\label{proposition trace of differential is differential}
Let $Q$ be a differential operator acting on 1-forms
and $R(x)$ be the corresponding parameter-dependent differential operator
defined, in local coordinates, in accordance with formula
\eqref{differential operator R in local coordinates}.
Then $\operatorname{\mathfrak{tr}}Q$ is the scalar differential operator
\begin{equation}
\label{23 November 2023 equation 1}
\operatorname{\mathfrak{tr}}Q:
f(x)
\mapsto
\left.
[
R_{\alpha}{}^\beta(x)\,
Z_\beta{}^\alpha(y,x)\,
f(y)
]
\right|_{y=x}\,.
\end{equation}
\end{proposition}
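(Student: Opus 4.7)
The strategy is to exploit the fact that a differential operator's Schwartz kernel is a distribution supported on the diagonal, so that the cutoff $\chi$ in the matrix-trace definition \eqref{Q acting on 1-forms 6} plays no role and the computation collapses to a single distributional pairing followed by integration by parts.

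First I would write $Q$ in local coordinates as in \eqref{differential operator Q in local coordinates} and extract its Schwartz kernel with respect to the measure $\rho(y)\,\dr y$: using the identity $\partial^{|\boldsymbol{\kappa}|}_x u_\beta(x) = \int \partial^{|\boldsymbol{\kappa}|}_x\delta(x-y)\,u_\beta(y)\,\dr y$, one reads off
\[
\mathfrak{q}_\alpha{}^\beta(x,y)=\frac{1}{\rho(y)}\sum_{|\boldsymbol{\kappa}|\le m} [q^{\boldsymbol{\kappa}}]_\alpha{}^\beta(x)\,\frac{\partial^{|\boldsymbol{\kappa}|}}{\partial x^{\boldsymbol{\kappa}}}\delta(x-y).
\]
Substituting into \eqref{definition matrix trace introduction equation}, the factor $\rho(y)$ from the measure cancels the $1/\rho(y)$ in the kernel, and since $\chi(\operatorname{dist}(x,y)/\epsilon)$ is identically $1$ in a neighbourhood of $y=x$ it may be freely ignored in all distributional manipulations near the diagonal (together with all its derivatives at $y=x$).

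Next I would convert the $x$-derivatives of $\delta(x-y)$ into $y$-derivatives via $\partial^{|\boldsymbol{\kappa}|}_x\delta(x-y)=(-1)^{|\boldsymbol{\kappa}|}\partial^{|\boldsymbol{\kappa}|}_y\delta(x-y)$ and integrate by parts in $y$ (no boundary terms since $M$ is closed). This transfers the derivatives onto the smooth factor $Z_\beta{}^\alpha(y,x)\,f(y)$, after which evaluation against $\delta(x-y)$ sets $y=x$. The outcome is precisely \eqref{23 November 2023 equation 1}, namely the application of $R_\alpha{}^\beta(x)$ to $Z_\beta{}^\alpha(y,x)\,f(y)$ as a function of $y$, followed by restriction to $y=x$.

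Finally I would verify that the resulting expression is genuinely a scalar differential operator of order $m$: expanding $\partial^{|\boldsymbol{\kappa}|}_y[Z_\beta{}^\alpha(y,x)\,f(y)]|_{y=x}$ via the Leibniz rule produces a finite sum, with smooth coefficients depending on derivatives of $Z$ along the diagonal, of derivatives of $f$ at $x$ up to order $m$. The top-order term, corresponding to $Z$ undifferentiated, contributes $\operatorname{tr} Q_\prin$ acting on $f$ (using $Z_\beta{}^\alpha(x,x)=\delta_\beta^\alpha$), consistent with \eqref{prin of tr is tr of prin}. There is no genuine obstacle beyond bookkeeping; the only point meriting a word of comment is that although \eqref{differential operator R in local coordinates} is written in a fixed chart, the resulting operator is chart-independent, as guaranteed by property~(i) of this section together with the tensorial character of the parallel-transport map $Z$.
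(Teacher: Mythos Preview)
Your argument is correct. The paper's proof follows the same logical path but phrases it in the oscillatory-integral language: it writes $Q$ in the pseudodifferential form \eqref{differential operator Q in local coordinates written as pseudodifferential operator}, inserts this into the definition of $\operatorname{\mathfrak{tr}}$ to obtain
\[
\operatorname{\mathfrak{tr}}Q:\,f(x)\mapsto(2\pi)^{-d}\int e^{i(x-y)^\gamma\xi_\gamma}\,q_\alpha{}^\beta(x,\xi)\,Z_\beta{}^\alpha(y,x)\,f(y)\,\dr y\,\dr\xi\,,
\]
and then observes that this is precisely \eqref{23 November 2023 equation 1}. Your delta-function computation is the position-space counterpart of the same identity: the polynomial symbol $q_\alpha{}^\beta(x,\xi)$ Fourier-transforms to the sum of derivatives of $\delta(x-y)$ that you wrote down, and your integration-by-parts step is exactly what the oscillatory integral encodes. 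So the two proofs are equivalent unpackings of the definitions; yours is marginally more elementary in that it avoids the Fourier machinery, while the paper's version dovetails with the amplitude-to-symbol reformulation in Proposition~\ref{proposition trace of differential is differential alternative} that immediately follows.
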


\begin{proof}
Let $U\subseteq M$ be a coordinate patch and let $x$ be local coordinates in $U$.
We assume that $U$ is small enough so that for all $x,y\in U$ we have
$\chi(\operatorname{dist}(x,y)/\epsilon)=1$,
where $\epsilon$ is the parameter appearing in formula \eqref{Q acting on 1-forms 6}.

In what follows, without loss of generality, the infinitely smooth 1-form $u$ and scalar function $f$ are assumed to be compactly supported in $U$.
This is acceptable because differential operators are local,
unlike the more general pseudodifferential operators.

In our coordinate patch and chosen local coordinates the differential operators $Q_{\alpha}{}^\beta$ read \eqref{differential operator Q in local coordinates}.
The operator $Q$ can now be equivalently rewritten in integral form as
\begin{equation}
\label{differential operator Q in local coordinates written as pseudodifferential operator}
Q:u_\alpha(x)
\mapsto
\frac{1}{(2\pi)^d}
\int
e^{i(x-y)^\gamma\xi_\gamma}\,
q_\alpha{}^\beta(x,\xi)\,
u_\beta(y)\,\dr y\,\dr\xi
\,,
\end{equation}
where
\begin{equation*}
\label{left symbol of the differential operator Q }
q_\alpha{}^\beta(x,\xi)
=
\sum_{|\boldsymbol{\kappa}|\le m}
i^{|\boldsymbol{\kappa}|}\,
[q^{\boldsymbol{\kappa}}]_\alpha{}^\beta(x)\,
\xi_{\boldsymbol{\kappa}}
\end{equation*}
is the (left) symbol of $Q$.
Here
$\xi_{\boldsymbol{\kappa}}=(\xi_1)^{\kappa_1}\dots(\xi_d)^{\kappa_d}$.

Formulae
\eqref{Q acting on 1-forms 1}--\eqref{Q acting on 1-forms 6}
and
\eqref{differential operator Q in local coordinates written as pseudodifferential operator}
imply
\begin{equation}
\label{6 October 2024 Dima equation 1}
\operatorname{\mathfrak{tr}}Q:
f(x)
\mapsto
\frac{1}{(2\pi)^d}
\int
e^{i(x-y)^\gamma\xi_\gamma}\,
q_\alpha{}^\beta(x,\xi)\,
Z_\beta{}^\alpha(y,x)\,
f(y)\,\,\dr y\,\dr\xi
\,.
\end{equation}
Examination of formula \eqref{6 October 2024 Dima equation 1}
shows that we are looking at \eqref{23 November 2023 equation 1}.
\end{proof}

\

We feel it necessary to provide an alternative equivalent reformulation of
Proposition~\ref{proposition trace of differential is differential},
one that avoids the use of the parameter-dependent differential operator
\eqref{differential operator R in local coordinates}.
Let us instead make use of 
the amplitude-to-symbol operator
\begin{equation}
\label{proof subprincipal symbol operators 1 forms equation 6}
\mathcal{S}_\mathrm{left}\sim \sum_{k=0}^{+\infty} \mathcal{S}_\mathrm{-k}, \qquad \mathcal{S}_0:=\left.(\,\cdot\,)\right|_{y=x}, \qquad \mathcal{S}_{-k}:=\frac{1}{k!}\left. \left[ \left(-i\dfrac{\partial^2}{\partial y^\gamma\partial\xi_\gamma} \right)^k(\,\cdot\,)\right] \right|_{y=x},
\end{equation}
see \cite[formula~(3.13)]{curl}.
The action of the operator
\eqref{proof subprincipal symbol operators 1 forms equation 6}
on an amplitude $q(x,y,\xi)$ of a (pseudo)\-differential operator
excludes the $y$-dependence, i.e.~gives the (left) symbol.
Note that the operator
\eqref{proof subprincipal symbol operators 1 forms equation 6}
maps an amplitude polynomial in $\xi$ to a (left) symbol polynomial in $\xi$.

\begin{proposition}
\label{proposition trace of differential is differential alternative}
Let $Q$ be a differential operator acting on 1-forms
with (left) symbol $q_\alpha{}^\beta(x,\xi)$.
Then $\operatorname{\mathfrak{tr}}Q$ is the scalar differential operator
with (left) symbol
\begin{equation}
\label{23 November 2023 equation 1 alternative}
\mathcal{S}_\mathrm{left}\,
[
q_\alpha{}^\beta(x,\xi)\,
Z_\beta{}^\alpha(y,x)
]\,.
\end{equation}
\end{proposition}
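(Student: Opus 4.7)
The plan is to derive \eqref{23 November 2023 equation 1 alternative} directly from the oscillatory integral representation \eqref{6 October 2024 Dima equation 1} already established in the proof of Proposition~\ref{proposition trace of differential is differential}. That formula exhibits $\operatorname{\mathfrak{tr}} Q$ as a pseudodifferential operator with \emph{amplitude}
\[
a(x,y,\xi):=q_\alpha{}^\beta(x,\xi)\,Z_\beta{}^\alpha(y,x),
\]
i.e.\ an integrand of the form $e^{i(x-y)\cdot\xi}\,a(x,y,\xi)$ depending on both the source point $y$ and the target point $x$. Converting the amplitude to a \emph{left} symbol is precisely what the operator $\mathcal{S}_\mathrm{left}$ from \eqref{proof subprincipal symbol operators 1 forms equation 6} is designed to do, so the result of the proposition is just $\mathcal{S}_\mathrm{left}[a]$.

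The key structural remark, which makes this work without any asymptotic error, is that $Q$ is a \emph{differential} operator, so $q_\alpha{}^\beta(x,\xi)$ is a polynomial in $\xi$ of degree at most $m$ (the order of $Q$). Each application of the differential operator $-i\,\partial^2/\partial y^\gamma\partial\xi_\gamma$ appearing in \eqref{proof subprincipal symbol operators 1 forms equation 6} strictly lowers the $\xi$-degree by one, so the terms $\mathcal{S}_{-k}[a]$ vanish identically for $k>m$. Hence the asymptotic series for $\mathcal{S}_\mathrm{left}$ collapses to a finite sum, the output is still polynomial in $\xi$, and $\operatorname{\mathfrak{tr}} Q$ is genuinely a scalar differential operator, in agreement with Proposition~\ref{proposition trace of differential is differential}.

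To make the argument fully rigorous, I would work in a coordinate patch small enough for the cutoff $\chi(\operatorname{dist}(x,y)/\epsilon)$ in \eqref{Q acting on 1-forms 6} to equal $1$, so that $Z_\beta{}^\alpha(y,x)$ is smooth in $y$ throughout the patch, and take $f$ compactly supported there (this is harmless because differential operators are local). Under these assumptions \eqref{6 October 2024 Dima equation 1} makes sense as a proper oscillatory integral with polynomial-in-$\xi$ amplitude, and the standard reduction from amplitudes to left symbols (via Taylor expansion of $a$ in the $y$-variable around $y=x$ combined with integration by parts in $\xi$) is a finite identity rather than an asymptotic one, yielding \eqref{23 November 2023 equation 1 alternative}.

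The only mildly delicate point is bookkeeping: one must check that the reduction $a(x,y,\xi)\mapsto \mathcal{S}_\mathrm{left}[a](x,\xi)$ produces the same scalar differential operator as the right-hand side of \eqref{23 November 2023 equation 1}. This can be verified either abstractly (both expressions define the same continuous operator on $C^\infty(M)$, since they arise from the same oscillatory integral) or by a direct combinatorial check: expanding $Z_\beta{}^\alpha(y,x)\,f(y)$ as a Taylor series in $y$ at $y=x$ and integrating by parts term-by-term in the oscillatory integral reproduces $[R_\alpha{}^\beta(x)\,Z_\beta{}^\alpha(y,x)\,f(y)]|_{y=x}$ on the one hand, and $\mathcal{S}_\mathrm{left}[q_\alpha{}^\beta(x,\xi)\,Z_\beta{}^\alpha(y,x)]$ acting on $f$ on the other. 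I do not expect any genuine obstacle here — the entire content of the proposition is a translation of Proposition~\ref{proposition trace of differential is differential} into the amplitude-to-symbol formalism.
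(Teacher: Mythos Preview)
Your proposal is correct and follows essentially the same route as the paper: the paper's proof is a one-line observation that \eqref{23 November 2023 equation 1 alternative} is a consequence of formula \eqref{6 October 2024 Dima equation 1} and the definition of the amplitude-to-symbol operator, and you have spelled out exactly this argument with the additional (useful) remark that polynomiality in $\xi$ makes the asymptotic sum in $\mathcal{S}_\mathrm{left}$ terminate.
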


\begin{proof}
Consequence of formula \eqref{6 October 2024 Dima equation 1}
and the definition of the amplitude-to-symbol operator.
\end{proof}

\begin{remark}
It is easy to see that if
$Q$ is a pseudodifferential operator acting on 1-forms
with (left) symbol $q_\alpha{}^\beta(x,\xi)$,
then $\operatorname{\mathfrak{tr}}Q$ is the scalar pseudodifferential operator
with (left) symbol given by formula
\eqref{23 November 2023 equation 1 alternative}.
Of course, in the pseudodifferential case symbols are not polynomials in $\xi$
and are defined modulo $O(|\xi|^{-\infty})$ as $|\xi|\to+\infty$.
\end{remark}

\

To conclude this section, we will examine some important examples of matrix trace.
In preparation for this analysis, recall
that in geodesic normal coordinates centred at $x=0$ the metric admits the expansion
\begin{equation}
\label{7 November 2023 equation 15}
g_{\alpha\beta}(y)
=
\delta_{\alpha\beta}
-
\frac13
\Riem_{\alpha\mu\beta\nu}(0)\,y^\mu y^\nu
+O(|y|^3)\,,
\end{equation}
so that one has
$\,
\displaystyle
\det g_{\alpha\beta}(y)
=
1
-
\frac13
\Ric_{\mu\nu}(0)\,y^\mu y^\nu
+O(|y|^3)
\,$
and
\begin{equation}
\label{7 November 2023 equation 17}
\rho(y)
=
1
-
\frac16
\Ric_{\mu\nu}(0)\,y^\mu y^\nu
+O(|y|^3)\,.
\end{equation}
Furthermore, the parallel transport map $Z^\alpha{}_\beta(x,y)$ admits the following expansion \cite[formula~(E.2)]{curl}
\begin{equation}
\label{prop: expansion of parallel transoport equation 2}
Z^\alpha{}_\beta(0,y)=\delta^\alpha{}_\beta-\frac16 \Riem^\alpha{}_{\mu\beta\nu}(0)\,y^\mu y^\nu +\frac16 \frac{\partial^2\Gamma^\alpha{}_{\mu\beta}}{\partial y^\nu \partial y^\rho}(0) \,y^\mu y^\nu y^\rho+O(|y|^4)\,.
\end{equation}

\begin{proposition}
\label{proposition trace curl}
In dimension $d=3$ we have
\begin{equation}
\label{proposition trace curl equation 1}
\operatorname{\mathfrak{tr}} \curl=0\,.
\end{equation}
\end{proposition}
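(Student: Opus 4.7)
The plan is to apply Proposition~\ref{proposition trace of differential is differential alternative} and exploit two facts: the tensor $E$ is antisymmetric in its first two indices, and parallel transport is the identity to quadratic order in geodesic normal coordinates. The full symbol of $\curl$ coincides with its principal symbol \eqref{principal symbol curl}, namely $q_\alpha{}^\beta(x,\xi)=-iE_\alpha{}^{\beta\gamma}(x)\xi_\gamma$, so the amplitude $q_\alpha{}^\beta(x,\xi)\,Z_\beta{}^\alpha(y,x)$ entering \eqref{23 November 2023 equation 1 alternative} is linear in $\xi$. Consequently, in the expansion \eqref{proof subprincipal symbol operators 1 forms equation 6}, only $\mathcal{S}_0$ and $\mathcal{S}_{-1}$ can contribute a nonzero term to the left symbol of $\operatorname{\mathfrak{tr}}\curl$.

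The $\mathcal{S}_0$ piece equals $q_\alpha{}^\beta(x,\xi)\,Z_\beta{}^\alpha(x,x)=q_\alpha{}^\alpha(x,\xi)=-iE_\alpha{}^{\alpha\gamma}(x)\xi_\gamma$, which vanishes because $E_{\alpha\mu\gamma}$ is antisymmetric in $(\alpha,\mu)$ while the metric used to raise an index is symmetric. For the $\mathcal{S}_{-1}$ piece, applying $-i\,\partial_{y^\gamma}\partial_{\xi_\gamma}$ to the amplitude and setting $y=x$ yields
$\mathcal{S}_{-1}(x,\xi)=-E_\alpha{}^{\beta\gamma}(x)\,\bigl(\partial_{y^\gamma}Z_\beta{}^\alpha(y,x)\bigr)\bigl|_{y=x}$, which is independent of $\xi$.

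To evaluate this residual expression at an arbitrary point $x_0\in M$, I would work in geodesic normal coordinates centred at $x_0$. Using \eqref{aux4} to rewrite $Z_\beta{}^\alpha(y,x_0)=Z^\alpha{}_\beta(x_0,y)$ and invoking the expansion \eqref{prop: expansion of parallel transoport equation 2}, one has $Z^\alpha{}_\beta(0,y)=\delta^\alpha{}_\beta+O(|y|^2)$; hence the first $y$-derivative at $y=0$ vanishes, so $\mathcal{S}_{-1}\bigl|_{x=x_0}=0$. Since the left symbol of the scalar first-order differential operator $\operatorname{\mathfrak{tr}}\curl$ vanishes at each $x_0$ (in normal coordinates centred there), and the action of a differential operator at a point is determined by its left symbol at that point in any chosen chart, one concludes $\operatorname{\mathfrak{tr}}\curl=0$.

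I anticipate no substantive obstacle: both cancellations are structural, with the antisymmetry of the Levi-Civita tensor forcing the vanishing of the principal part and the absence of a linear correction to parallel transport in normal coordinates forcing the vanishing of the zeroth-order part. The only point worth stating carefully is the last one—that pointwise vanishing of the left symbol in normal coordinates centred at each point suffices to conclude global vanishing of the operator.
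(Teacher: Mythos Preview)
Your proof is correct and essentially identical to the paper's: the paper applies Proposition~\ref{proposition trace of differential is differential} (rather than its equivalent reformulation Proposition~\ref{proposition trace of differential is differential alternative}) to obtain the expression $\left.\left[-E_\alpha{}^{\beta\gamma}(x)\,\partial_{y^\gamma}\,Z_\beta{}^\alpha(y,x)\,f(y)\right]\right|_{y=x}$, and then invokes the same two ingredients you use---antisymmetry of $E$ for the term where $\partial_{y^\gamma}$ hits $f$, and the expansion \eqref{prop: expansion of parallel transoport equation 2} in geodesic normal coordinates for the term where it hits $Z$. Your packaging via $\mathcal{S}_0$ and $\mathcal{S}_{-1}$ is just the symbol-side restatement of the same computation.
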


\begin{proof}
Formula \eqref{23 November 2023 equation 1} applied to $Q=\curl$ gives us
\begin{equation}
\label{proof proposition trace curl equation 1}
\operatorname{\mathfrak{tr}} \curl :
f(x)
\mapsto
\left.
\left[
-E_\alpha{}^{\beta\gamma}(x)\,\frac{\partial}{\partial y^\gamma}\,
Z_\beta{}^\alpha(y,x)\,
f(y)
\right]
\right|_{y=x}\,.
\end{equation}
Let us choose geodesic normal coordinates centred at $x=0$. Then formulae
\eqref{proof proposition trace curl equation 1},
\eqref{Levi-Civita tensor},
\eqref{7 November 2023 equation 15},
\eqref{7 November 2023 equation 17},
\eqref{aux4}
and 
\eqref{prop: expansion of parallel transoport equation 2}
immediately imply \eqref{proposition trace curl equation 1}.
\end{proof}

\begin{proposition}
\label{proposition trace curl cubed}
In dimension $d=3$ we have
\begin{equation*}
\label{proposition trace curl cubed equation 1}
\operatorname{\mathfrak{tr}} (\curl^3)=0\,.
\end{equation*}
\end{proposition}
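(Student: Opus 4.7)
The plan is to compute $\operatorname{\mathfrak{tr}}(\curl^3)$ pointwise at an arbitrary $x_0\in M$ using Proposition~\ref{proposition trace of differential is differential} in geodesic normal coordinates centred at $x_0$, following the same scheme as the proof of Proposition~\ref{proposition trace curl}, only with more terms to track. As a cosmetic simplification one may use the operator identity $\curl^3 = -\curl\,\boldsymbol{\Delta}$ on $\Omega^1$, which follows from $\dr^2=0$, the three-dimensional Hodge duality $\delta|_{\Omega^2}=*\dr*$ (so that $\curl^2=*\dr*\dr=\delta\dr$), the formula $\boldsymbol{\Delta}=-\dr\delta-\delta\dr$, and $\curl\dr=*\dr^2=0$; but this reduction is not essential.

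First I would write the operator $\curl^3$ in local coordinates as $\sum_{|\boldsymbol{\kappa}|\le 3}[q^{\boldsymbol{\kappa}}]_\alpha{}^\beta(x)\,\partial^{\boldsymbol{\kappa}}$ by direct composition. Two simplifications then occur at $x_0$ in GNC: (i) $\partial g(x_0) = \partial\rho(x_0) = 0$ yields $\partial E(x_0) = 0$, which makes $[q^{\boldsymbol{\kappa}}](x_0)$ vanish for $|\boldsymbol{\kappa}|=2$ (every second-order contribution in $\curl^3$ involves exactly one derivative of $E$); and (ii) since every Leibniz distribution in $\curl^3 = (*\dr)^3$ ends with at least one $\partial$ acting on the argument, $\curl^3$ has no zeroth-order part as a differential operator at all. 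Combining these with $Z_\beta{}^\alpha(x_0, x_0) = \delta_\beta^\alpha$ and $\partial_{y^\rho} Z_\beta{}^\alpha(x_0, x_0) = 0$ (both immediate in GNC), an application of Proposition~\ref{proposition trace of differential is differential} and Leibniz-expansion yields
\[
\operatorname{\mathfrak{tr}}(\curl^3)\,f(x_0) \;=\; c_0\,f(x_0) + c^\gamma\,\partial_\gamma f(x_0) + c^{\gamma\mu\nu}\,\partial_\gamma\partial_\mu\partial_\nu f(x_0),
\]
with no contribution at order $\partial^2 f(x_0)$ (the $\partial Z\cdot\partial^2 f$ piece is killed by $\partial Z(x_0)=0$, and the operator itself has no second-order piece at $x_0$).

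The coefficient $c^{\gamma\mu\nu}$ reduces to a triple contraction of Levi-Civita symbols and vanishes by antisymmetry exactly as in the proof of Proposition~\ref{proposition trace curl}. The hard part will be showing $c^\gamma = 0$ and $c_0 = 0$: each is a specific contraction of $\varepsilon$ with either (i) $\partial^2_y Z$ or $\partial^3_y Z$ at $(x_0,x_0)$ multiplied against the third-order coefficient, or (ii) $\partial^2 E(x_0)$ multiplied against the surviving first-order coefficient of $\curl^3$, and all such quantities can be expressed through the Riemann and Ricci tensors via the GNC expansions~\eqref{7 November 2023 equation 15}--\eqref{prop: expansion of parallel transoport equation 2}. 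The expected cancellations will rest on the antisymmetries of $\varepsilon$, the pair symmetries and first Bianchi identity of $\Riem$, and the three-dimensional reduction of $\Riem$ to linear combinations of $\Ric$, $\Sc$ and the metric. Since $x_0\in M$ was arbitrary and the matrix trace of a differential operator is covariantly defined (Proposition~\ref{proposition trace of differential is differential alternative}), the vanishing of $c_0$, $c^\gamma$, $c^{\gamma\mu\nu}$ at $x_0$ in GNC will suffice to conclude $\operatorname{\mathfrak{tr}}(\curl^3) = 0$ globally.
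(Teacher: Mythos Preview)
Your proposal is correct and follows essentially the same route as the paper: work in geodesic normal coordinates at an arbitrary point, apply Proposition~\ref{proposition trace of differential is differential}, and reduce everything to contractions of $\varepsilon$ with curvature quantities coming from the expansions \eqref{7 November 2023 equation 15}--\eqref{prop: expansion of parallel transoport equation 2}. The only difference is bookkeeping: you organise the Leibniz expansion by the order of $\partial^k f$ appearing in the output, whereas the paper organises it by the successive terms in the Taylor expansion of $Z$; the two slicings are equivalent and lead to the same individual cancellations (your $c^{\gamma\mu\nu}$, $c^\gamma$, $c_0$ correspond respectively to the paper's ``first term'', ``first $+$ second term'', and ``third term'' contributions from~\eqref{prop: expansion of parallel transoport equation 2}).

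One small caution on wording: the raw trace of the third-order coefficient $E_\alpha{}^{\beta\gamma}E_\beta{}^{\delta\varepsilon}E_\delta{}^{\alpha\lambda}$ is $E^{\gamma\varepsilon\lambda}$ (or, after the $\varepsilon$--$\delta$ simplification the paper performs, it becomes $\varepsilon_\alpha{}^{\alpha\rho}\delta^{\sigma\tau}=0$ directly), so strictly speaking $c^{\gamma\mu\nu}\partial_\gamma\partial_\mu\partial_\nu f=0$ holds because a totally antisymmetric coefficient is contracted against a symmetric derivative, not because the coefficient itself vanishes before symmetrisation. This is exactly the ``antisymmetry'' you invoke, just be precise about where it acts when you write the details out.
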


\begin{proof}
We are looking at the differential operator $Q=\curl^3\,$,
\begin{equation}
\label{proof proposition trace curl cubed equation 2}
Q_\alpha{}^\beta
=-E_\alpha{}^{\gamma\rho}(x)\,\frac{\partial}{\partial x^\rho}
\left(
E_\gamma{}^{\mu\sigma}(x)\,\frac{\partial}{\partial x^\sigma}
\left(
E_\mu{}^{\beta\tau}(x)\,\frac{\partial}{\partial x^\tau}
\right)
\right).
\end{equation}
We need to rewrite
\eqref{proof proposition trace curl cubed equation 2}
in the form
\eqref{differential operator Q in local coordinates},
i.e.~put all the coefficients in front of partial derivatives,
and then form the parameter-dependent differential operator $R(x)$
in accordance with formula
\eqref{differential operator R in local coordinates}.

Let us choose geodesic normal coordinates centred at $x=0$. A somewhat lengthy but straightforward calculation shows that in the chosen coordinate system the differential operator $R(0)$ reads
\begin{multline}
\label{proof proposition trace curl cubed equation 3 corrected 2}
R_\alpha{}^\beta(0)
=
\left[
\varepsilon^{\beta\rho\tau}
\Ric_{\alpha\rho}(0)
-
\frac13\varepsilon_\alpha{}^{\beta\rho}
\Ric_{\rho}{}^\tau(0)\,
+
\frac13\varepsilon_\alpha{}^{\tau\rho}
\Ric_{\rho}{}^\beta(0)
\right]
\frac{\partial}{\partial y^\tau}
\\
+
\varepsilon_\alpha{}^{\beta\rho}\,
\frac{\partial}{\partial y^\rho}
\left(
\delta^{\sigma\tau}
\frac{\partial^2}{\partial y^\sigma\partial y^{\tau}}
\right).
\end{multline}
In writing down \eqref{proof proposition trace curl cubed equation 3 corrected 2} we used
\eqref{Levi-Civita tensor},
\eqref{7 November 2023 equation 15},
\eqref{7 November 2023 equation 17} and the elementary identity
\begin{equation*}
\label{proof proposition trace curl cubed equation 4}
\varepsilon_{\alpha\beta\gamma}\,\varepsilon^{\mu\nu\gamma}= \delta_\alpha{}^\mu \delta_\beta{}^\nu-\delta_\alpha{}^\nu \delta_\beta{}^\mu.
\end{equation*}

It only remains to substitute
\eqref{proof proposition trace curl cubed equation 3 corrected 2},
\eqref{aux4}
and 
\eqref{prop: expansion of parallel transoport equation 2}
into the RHS of
\eqref{23 November 2023 equation 1}
at $x=0$.

We consider separately the contributions to 
\eqref{23 November 2023 equation 1}
from the three terms on the RHS of
\eqref{prop: expansion of parallel transoport equation 2}.

The contribution from the first term on the RHS of
\eqref{prop: expansion of parallel transoport equation 2}
vanishes because $\,R_\alpha{}^\beta(0)\,\delta^\alpha{}_\beta=0\,$.

The contribution from the second term on the RHS of
\eqref{prop: expansion of parallel transoport equation 2} reads
\begin{equation*}
\label{contribution 2 formula 1}
-\,\frac{1}{3}\,
\varepsilon_\alpha{}^{\beta\rho}\,
\Ric^\alpha{}_\beta(0)\,
\left.\frac{\partial f}{\partial y^\rho}\right|_{y=0}
-\,\frac{1}{3}\,
\varepsilon_\alpha{}^{\beta\rho}\,
[\Riem^\alpha{}_{\rho\beta}{}^\sigma(0)+\Riem^{\alpha\sigma}{}_{\beta\rho}(0)]
\left.\frac{\partial f}{\partial y^\sigma}\right|_{y=0}\,.
\end{equation*}
The tensors
$\,\Ric^{\alpha\beta}(0)\,$
and
$\,\Riem^{\alpha\rho\beta\sigma}(0)+\Riem^{\alpha\sigma\beta\rho}(0)\,$
are symmetric in $\alpha,\beta$,
hence the expression \eqref{contribution 2 formula 1} vanishes.

Note that the fact that
the second term on the RHS of
\eqref{prop: expansion of parallel transoport equation 2} 
gives a zero contribution to the RHS of
\eqref{23 November 2023 equation 1}
at $x=0$ can be established without involving the
explicit formula \eqref{contribution 2 formula 1}.
In dimension three the Riemann curvature tensor is expressed via the Ricci tensor,
so we are looking at a linear combination of terms of the form
\begin{equation*}
\label{contribution 2 formula 2}
\varepsilon_{\alpha\beta\gamma}
\times
\Ric_{\kappa\lambda}
\times
\left.\frac{\partial f}{\partial y^\mu}\right|_{y=0}\,.
\end{equation*}
Here one has to perform contraction of indices to get a scalar.
It is easy to see that any such contraction gives zero.

The proof of Proposition~\ref{proposition trace curl cubed} has been reduced to
examining the contribution to 
\eqref{23 November 2023 equation 1}
coming from the third term on the RHS of
\eqref{prop: expansion of parallel transoport equation 2}.
Namely, the task at hand is to show that
\begin{equation*}
\label{proof proposition trace curl cubed equation corrected}
\varepsilon_\alpha{}^{\beta\rho}\,
\frac{\partial}{\partial y^\rho}
\left(
\delta^{\sigma\tau}
\frac{\partial^2}{\partial y^\sigma\partial y^{\tau}}
\right)
\frac{\partial^2\Gamma^\alpha{}_{\mu\beta}}{\partial y^\nu \partial y^\gamma}(0) \,y^\mu y^\nu y^\gamma
=0\,.
\end{equation*}
But $\partial^2\Gamma$ is expressed via $\nabla\Ric\,$,
so we are looking at a linear combination of terms of the form
\begin{equation*}
\label{contribution 3 formula 2}
\varepsilon_{\alpha\beta\gamma}
\times
\nabla_\kappa\Ric_{\lambda\mu}
\,.
\end{equation*}
Here one has to perform contraction of indices to get a scalar.
It is easy to see that any such contraction gives zero.
This completes the proof of Proposition~\ref{proposition trace curl cubed}.
\end{proof}

\begin{proposition}
\label{proposition trace Hodge Laplacian}
In any dimension $d\ge2$ we have
\begin{equation}
\label{proposition trace Hodge Laplacian equation 1}
\operatorname{\mathfrak{tr}} \boldsymbol{\Delta}=d\Delta-\Sc\,,
\end{equation}
where $\boldsymbol{\Delta}$ is the (nonpositive) Hodge Laplacian acting on 1-forms,
$\Delta$ is the (nonpositive) Laplace--Beltrami operator
and $\Sc$ is scalar curvature.
\end{proposition}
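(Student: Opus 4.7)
The plan is to reduce everything to an explicit calculation in geodesic normal coordinates by first applying the classical Weitzenböck/Bochner identity on 1-forms,
\begin{equation*}
\boldsymbol{\Delta}u_\alpha = g^{\mu\nu}\nabla_\mu\nabla_\nu u_\alpha - \Ric_\alpha{}^\beta u_\beta,
\end{equation*}
which follows from a direct computation of $-(\dr\delta + \delta\dr)u$ together with the commutator identity $[\nabla_\mu,\nabla_\nu]u_\alpha = -R^\beta{}_{\alpha\mu\nu}u_\beta$. Writing $L := g^{\mu\nu}\nabla_\mu\nabla_\nu$ for the connection Laplacian on 1-forms, linearity of the matrix trace gives
\begin{equation*}
\operatorname{\mathfrak{tr}}\boldsymbol{\Delta} = \operatorname{\mathfrak{tr}}L - \operatorname{\mathfrak{tr}}(\Ric).
\end{equation*}
The Ricci piece is immediate from Proposition~\ref{proposition trace of differential is differential}: applied to the zeroth-order operator of multiplication by $\Ric_\alpha{}^\beta$, formula~\eqref{23 November 2023 equation 1} yields $\operatorname{\mathfrak{tr}}(\Ric) = \Ric_\alpha{}^\beta\,Z_\beta{}^\alpha(x,x) = \Ric_\alpha{}^\alpha = \Sc$. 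It therefore remains to show $\operatorname{\mathfrak{tr}}L = d\,\Delta$.

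To establish this, we fix an arbitrary $p\in M$ and work in geodesic normal coordinates centred at $p=0$, where $g_{\alpha\beta}(0) = \delta_{\alpha\beta}$, $\Gamma(0)=0$, and, by differentiating the radial-geodesic identity $\Gamma^a{}_{bc}(x)x^b x^c \equiv 0$ twice, $\partial_d\Gamma^a{}_{bc}(0) = -\tfrac13(R^a{}_{bcd} + R^a{}_{cbd})(0)$. Expanding $L$ in the form \eqref{differential operator Q in local coordinates} and identifying the coefficients $[q^{\boldsymbol\kappa}]_\alpha{}^\beta(x)$, we observe that at $x=0$ every coefficient containing an undifferentiated $\Gamma$ drops out, so only $[q^{(\rho,\sigma)}]_\alpha{}^\beta(0) = \delta^{\rho\sigma}\delta_\alpha{}^\beta$ and $[q^\emptyset]_\alpha{}^\beta(0) = -\delta^{\mu\nu}\partial_\mu\Gamma^\beta{}_{\nu\alpha}(0)$ survive. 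We then apply Proposition~\ref{proposition trace of differential is differential} and evaluate $[R_\alpha{}^\beta(0)\,Z_\beta{}^\alpha(y,0)\,f(y)]|_{y=0}$.

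Using the expansion of parallel transport \eqref{prop: expansion of parallel transoport equation 2}, which gives $Z_\alpha{}^\alpha(y,0) = d - \tfrac16\Ric_{\mu\nu}(0)y^\mu y^\nu + O(|y|^3)$, the second-order contribution produces $d\,\Delta f(0)$ from the constant term in $Z_\alpha{}^\alpha$, plus $-\tfrac13\Sc(0)\,f(0)$ from its second $y$-derivative at the origin (first derivatives vanish in normal coordinates). The zeroth-order contribution equals $-\delta^{\mu\nu}\partial_\mu\Gamma^\alpha{}_{\nu\alpha}(0)\,f(0) = +\tfrac13\Sc(0)\,f(0)$, where the key identity $\delta^{\mu\nu}\partial_\mu\Gamma^\alpha{}_{\nu\alpha}(0) = -\tfrac13\Sc(0)$ follows either from the normal-coordinate formula for $\partial\Gamma$ together with the first Bianchi identity, or, more quickly, from $\Gamma^\alpha{}_{\nu\alpha} = \partial_\nu\ln\rho$ combined with \eqref{7 November 2023 equation 17}. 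The two scalar-curvature contributions cancel exactly, giving $\operatorname{\mathfrak{tr}}L\,f(0) = d\,\Delta f(0)$; covariance of the construction together with the arbitrariness of $p$ then promotes this to the operator identity $\operatorname{\mathfrak{tr}}L = d\,\Delta$, which combined with the first paragraph yields \eqref{proposition trace Hodge Laplacian equation 1}. The main technical point is simply the careful sign and index bookkeeping that makes the cancellation between the two $\Sc$ contributions transparent.
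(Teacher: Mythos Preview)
Your argument is correct. The route differs from the paper's in one organisational respect: you invoke the Weitzenb\"ock identity $\boldsymbol{\Delta}=L-\Ric$ up front, dispose of the zeroth-order piece $\operatorname{\mathfrak{tr}}(\Ric)=\Sc$ immediately, and then show $\operatorname{\mathfrak{tr}}L=d\Delta$ via a cancellation of two $\tfrac13\Sc$ terms (one from the second $y$-derivative of $Z_\alpha{}^\alpha(y,0)$, the other from $-\delta^{\mu\nu}\partial_\mu\Gamma^\alpha{}_{\nu\alpha}(0)$). The paper instead works directly with the explicit local formulae for $\dr\delta$ and $\delta\dr$, obtaining $R_\alpha{}^\beta(0)=\delta_\alpha{}^\beta\,\delta^{\mu\gamma}\partial_{y^\mu}\partial_{y^\gamma}-\tfrac23\Ric_\alpha{}^\beta(0)$ and then reading off contributions $d\Delta-\tfrac23\Sc$ and $-\tfrac13\Sc$ from the first two terms in the parallel-transport expansion. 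Both computations are of comparable length; your version has the merit of isolating the clean statement $\operatorname{\mathfrak{tr}}L=d\Delta$ for the connection Laplacian, while the paper's version avoids appealing to Weitzenb\"ock and keeps everything at the level of the explicit $\dr\delta+\delta\dr$ formulae.
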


\begin{proof}
The (nonpositive) Hodge Laplacian $\boldsymbol{\Delta}$ acting on 1-forms is defined by formula
\begin{equation}
\label{definition of Hodge Laplacian}
\boldsymbol{\Delta}=-(\dr\delta+\delta\dr)\,.
\end{equation}
In local coordinates we have
\begin{equation}
\label{8 December 2023 equation 5}
(\dr\delta)_{\alpha}{}^\beta=
-
\frac{\partial}{\partial x^\alpha}\,
\rho^{-1}\,
\frac{\partial}{\partial x^\gamma}\,
g^{\gamma\beta}\,\rho\,,
\end{equation}
\begin{equation}
\label{8 December 2023 equation 3}
(\delta \dr)_{\alpha}{}^\beta=
g_{\alpha\nu}\,
\rho^{-1}\,
\frac{\partial}{\partial x^\mu}
\left(
g^{\mu\beta}\,g^{\nu\gamma}
-
g^{\mu\gamma}\,g^{\nu\beta}
\right)
\rho\,
\frac{\partial}{\partial x^\gamma}\,.
\end{equation}
In \eqref{8 December 2023 equation 5} and \eqref{8 December 2023 equation 3} it is understood that partial derivatives act on everything to their right.

We are looking at the differential operator $Q=\boldsymbol{\Delta}$
defined by formulae
\eqref{definition of Hodge Laplacian}--\eqref{8 December 2023 equation 3}.
We need to rewrite it
in the form
\eqref{differential operator Q in local coordinates},
i.e.~put all the coefficients in front of partial derivatives,
and then form the parameter-dependent differential operator $R(x)$
in accordance with formula
\eqref{differential operator R in local coordinates}.

As in the proof of Proposition~\ref{proposition trace curl cubed},
let us choose geodesic normal coordinates centred at $x=0$.
A~straightforward calculation based on the use of formulae
\eqref{7 November 2023 equation 15}
and
\eqref{7 November 2023 equation 17}
shows that in the chosen coordinate system the differential operator $R(0)$ reads
\begin{equation}
\label{R(0) for Hodge Laplacian}
R_\alpha{}^\beta(0)
=
\delta_\alpha{}^\beta
\left(
\delta^{\mu\gamma}
\frac{\partial^2}{\partial y^\mu\partial y^{\gamma}}
\right)
-
\frac23
\Ric_\alpha{}^\beta(0)\,.
\end{equation}

We now substitute
\eqref{R(0) for Hodge Laplacian},
\eqref{aux4}
and 
\eqref{prop: expansion of parallel transoport equation 2}
into the RHS of
\eqref{23 November 2023 equation 1}
at $x=0$.

We consider separately the contributions to 
\eqref{23 November 2023 equation 1}
from the first two terms on the RHS of
\eqref{prop: expansion of parallel transoport equation 2}.

The contribution from the first term on the RHS of
\eqref{prop: expansion of parallel transoport equation 2}
reads
$\displaystyle
\,
d
\left(
\delta^{\mu\gamma}
\frac{\partial^2}{\partial y^\mu\partial y^{\gamma}}
\right)
-
\frac23
\Sc(0)\,
$,
whereas the contribution from the second term on the RHS of
\eqref{prop: expansion of parallel transoport equation 2}
reads
$\displaystyle
\,
-
\frac13
\Sc(0)\,
$.
Hence,
in geodesic normal coordinates centred at $x=0$
we have
\begin{equation}
\label{Dima 28 October 2024 equation 1}
[\,(\operatorname{\mathfrak{tr}}\boldsymbol{\Delta})f\,](0)
=
\left.
\left[
\,
d
\left(
\delta^{\mu\gamma}
\frac{\partial^2 f}{\partial y^\mu\partial y^{\gamma}}
\right)
-
\Sc f\,
\right]
\right|_{y=0}\,.
\end{equation}

Comparing \eqref{Dima 28 October 2024 equation 1}
with the action of the Laplace--Beltrami operator
$
\displaystyle
\,
\Delta
=
\rho^{-1}\,
\frac{\partial}{\partial y^\mu}\,
g^{\mu\nu}\,\rho\,
\frac{\partial}{\partial y^\nu}
\,$,
we arrive at \eqref{proposition trace Hodge Laplacian equation 1}.
\end{proof}

\section{The parameter-dependent asymmetry operator $A^{(s)}$}
\label{The parameter-dependent asymmetry operator}

In this section we will examine the parameter-dependent asymmetry operator $A^{(s)}$ introduced in Definition~\ref{definition parameter dependent asymmetry operator} and establish its basic properties. In particular, we will prove Theorem~\ref{Theorem order of A(s)}.

\subsection{An alternative representation for $A^{(s)}$}
\label{An alternative representation for As}

To start with, let us provide an alternative representation for the operator $A^{(s)}$.

Let
\begin{equation*}
\label{eigenvalues of Laplace--Beltrami}
0=\mu_0<\mu_1\le \mu_2\le \dots \to +\infty
\end{equation*}
be the eigenvalues of the operator $-\Delta$ enumerated in increasing order and with account of multi\-plicity, and let $f_j$, $j=0,1,2,\dots$, be the corresponding orthonormalised eigenfunctions. Here $\Delta$ is the (nonpositive) Laplace--Beltrami operator.

Recall also that $(\lambda_j,u_j)$, $j\in \mathbb{Z}\setminus \{0\}$, is our notation the eigensystem of $\curl$.

\begin{lemma}
\label{lemma basic properties As}
\phantom{a}
\begin{enumerate}[(a)]
\item
We have
\begin{equation}
\label{alternative representation for As}
A^{(s)}=\operatorname{\mathfrak{tr}}\left[\curl\, (-\boldsymbol{\Delta})^{-\frac{s+1}2}\right].
\end{equation}

\item
If $s>3$ the operator $A^{(s)}$ is of trace class.
\end{enumerate}
\end{lemma}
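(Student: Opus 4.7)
The plan for part~(a) is to establish the operator-level identity
\[
(P_+ - P_-)\,(-\boldsymbol{\Delta})^{-s/2} \;=\; \curl\,(-\boldsymbol{\Delta})^{-(s+1)/2}
\]
as operators on $\Omega^1$ and then take $\operatorname{\mathfrak{tr}}$ of both sides. I would verify the identity summand by summand on the Hodge decomposition $\Omega^1 = \dr\Omega^0 \oplus \delta\Omega^2 \oplus \mathcal{H}^1$. On the coexact part, the eigenforms $\{u_j\}_{j\in\mathbb{Z}\setminus\{0\}}$ of $\curl$ form a complete orthonormal basis, and $\delta u_j = 0$ gives $-\boldsymbol{\Delta}\,u_j = \delta\dr u_j = \curl^2 u_j = \lambda_j^2\,u_j$, hence $(-\boldsymbol{\Delta})^{-t/2} u_j = |\lambda_j|^{-t} u_j$ for every real $t$. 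The identity then reduces to the elementary scalar relation $\operatorname{sgn}(\lambda_j)\,|\lambda_j|^{-s} = \lambda_j\,|\lambda_j|^{-s-1}$. On $\dr\Omega^0$ both sides vanish: $P_\pm$ annihilate exact forms because $\langle u_j,\dr h\rangle = \langle \delta u_j, h\rangle = 0$, while on the right-hand side $(-\boldsymbol{\Delta})^{-(s+1)/2}$ maps $\dr\Omega^0$ into itself (since $\boldsymbol{\Delta}$ commutes with $\dr$), so that $\curl = *\dr$ kills it via $\dr^2=0$. On $\mathcal{H}^1$ the pseudoinverse kills everything by convention, and both sides vanish again.

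For part~(b) the plan is a straightforward order count. The projections $P_\pm$ have order zero and commute with $(-\boldsymbol{\Delta})^{-s/2}$, since both preserve the Hodge decomposition and share eigenforms on $\delta\Omega^2$; hence $(P_+-P_-)(-\boldsymbol{\Delta})^{-s/2}$ is a self-adjoint pseudodifferential operator of order $-s$. By Definition~\ref{definition matrix trace introduction} together with property~(ii) of the matrix trace recalled in Section~\ref{The matrix trace of an operator, revisited}, $A^{(s)}$ is then a self-adjoint scalar pseudodifferential operator of the same order $-s$ on the closed $3$-manifold $M$. For $s>3$ this order is strictly less than $-d=-3$, so the standard trace class criterion for pseudodifferential operators on a closed manifold yields that $A^{(s)}$ is of trace class.

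The main step requiring any genuine care is the verification that the two sides of (a) really agree on the complement $\dr\Omega^0 \oplus \mathcal{H}^1$ of $\delta\Omega^2$ in $\Omega^1$. This boils down to the fact that $\boldsymbol{\Delta}$ commutes with $\dr$ and $\delta$, so that $(-\boldsymbol{\Delta})^{-(s+1)/2}$ preserves the Hodge decomposition, mapping $\dr\Omega^0$ back into itself and $\mathcal{H}^1$ to zero. Once this compatibility is in place, the rest is routine: part~(a) collapses to a one-line spectral calculation on $\delta\Omega^2$, and part~(b) to order bookkeeping; no microlocal input beyond the general properties of $\operatorname{\mathfrak{tr}}$ already established in Section~\ref{The matrix trace of an operator, revisited} is needed.
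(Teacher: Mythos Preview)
Your proposal is correct and follows essentially the same approach as the paper. The paper first isolates the intermediate identity $P_+-P_-=\curl\,(-\boldsymbol{\Delta})^{-1/2}$ via the spectral decomposition \eqref{spectral decomposition Hodge Laplacian} of the Hodge Laplacian and then substitutes, whereas you verify the full $s$-dependent identity directly on each Hodge summand; but the underlying spectral/Hodge-theoretic content is identical, and part~(b) is the same order count in both cases.
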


\begin{proof}
(a)
The family of 1-forms $v_j:=\mu_j^{-1/2}\dr f_j$, $j=1,2,\dots$, forms an orthonormal basis for the Hilbert space $\dr\Omega^0$ with inner product \eqref{inner product}.
Since on a Riemannian 3-manifold the codifferential $\delta$ acts on $\Omega^k$ as
$\,\delta=(-1)^k *\dr\,*\,$,
the Spectral Theorem gives us the following representation for the (nonpositive) Hodge Laplacian
\eqref{definition of Hodge Laplacian}
acting on 1-forms:
\begin{equation}
\label{spectral decomposition Hodge Laplacian}
\boldsymbol{\Delta}=-(\dr\delta+\delta\dr)=-\sum_{j=1}^{+\infty}\mu_j\,v_j\langle v_j,\,\cdot\,\rangle-\sum_{j\in\mathbb{Z}\setminus\{0\}}\lambda_j^2\,u_j\langle u_j,\,\cdot\,\rangle\,.
\end{equation}
Hence,
\begin{equation}
\label{tth power of Hodge Laplacian}
(-\boldsymbol{\Delta})^r=
(\dr\delta+\delta\dr)^r=\sum_{j=1}^{+\infty}\mu_j^r\,v_j\langle v_j,\,\cdot\,\rangle+\sum_{j\in\mathbb{Z}\setminus\{0\}}|\lambda_j|^{2r}\,u_j\langle u_j,\,\cdot\,\rangle\,,
\qquad r\in\mathbb{R}\,.
\end{equation}

Formulae
\eqref{definition Pplus},
\eqref{definition Pminus} and
\eqref{tth power of Hodge Laplacian} and Hodge's decomposition imply
\begin{equation}
\label{Pplus-Pminus alterntive representation}
P_+-P_-=\curl\,(-\boldsymbol{\Delta})^{-1/2}\,.
\end{equation}
Substituting \eqref{Pplus-Pminus alterntive representation} into~\eqref{definition parameter dependent asymmetry operator equation 1} we obtain \eqref{alternative representation for As}.

(b)
Examination
of formula \eqref{definition parameter dependent asymmetry operator equation 1}
or formula \eqref{alternative representation for As} shows that for $s>3$
the order of the pseudo\-differential operator $A^{(s)}$ is strictly less than $\,-3\,$.
But a self-adjoint pseudodifferential operator of order strictly less than $\,-3\,$ is of trace class (recall that the dimension of $M$ is 3).
\end{proof}

Although at first glance the meaning of~\eqref{alternative representation for As} is less transparent than that of~\eqref{definition parameter dependent asymmetry operator equation 1}, the representation \eqref{alternative representation for As} will be especially convenient in the forthcoming calculations because it involves the composition of a power of the Hodge Laplacian with the differential operator $\curl$, as opposed to the pseudodifferential operator $P_+-P_-$. Indeed, from a microlocal perspective $\curl$ possesses the following nice properties upon which we will rely extensively:
\begin{enumerate}[(i)]
\item the full symbol of $\curl$ coincides with its principal symbol \eqref{principal symbol curl} and

\item
the symbol of $\curl$ is linear in momentum $\xi\,$, so that
\begin{equation}
\label{higher derivatives of curl prin are zero}
\frac
{\partial^{|\boldsymbol{\kappa}|}\,[\curl_\prin]_{\alpha}{}^\beta}
{\partial\xi_{\boldsymbol{\kappa}}}
(x,\xi)=0
\end{equation}
for all multi-indices $\boldsymbol{\kappa}\in \mathbb{N}_0^3$ with $|\boldsymbol{\kappa}|\ge 2$.
\end{enumerate}

\subsection{The order of $A^{(s)}$}
\label{The order of As}

The goal of this subsection is to prove part (a) of Theorem~\ref{Theorem order of A(s)}. Namely, we will show that the operator $A^{(s)}$ is of order $-s-3$. This will be achieved in several steps, decreasing the order of $A^{(s)}$ at each step.

\

The first step is the simplest, in that it does not require extensive use of the resolvent.

\begin{lemma}
\label{lemma As is of order -s-2}
The parameter-dependent asymmetry operator $A^{(s)}$ is a pseudodifferential operator of order $-s-2$.
\end{lemma}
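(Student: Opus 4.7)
My plan is to start from the alternative representation \eqref{alternative representation for As} and show that the two top homogeneous components of the full symbol of $A^{(s)}$, of degrees $-s$ and $-s-1$, both vanish. By the matrix-trace identities \eqref{prin of tr is tr of prin}--\eqref{sub of tr is tr of sub}, this reduces to showing that the ordinary matrix traces of the corresponding homogeneous components of the full symbol of $Q:=\curl\,(-\boldsymbol{\Delta})^{-(s+1)/2}$ vanish. Since, per properties~(i)--(ii) listed just before the lemma, the symbol of $\curl$ equals its principal symbol~\eqref{principal symbol curl} and is linear in $\xi$, the standard composition formula truncates after the $|\boldsymbol{\kappa}|=1$ term, yielding
\[
q_\alpha{}^\beta(x,\xi)
= -i\,E_\alpha{}^{\gamma\mu}(x)\,\xi_\mu\,t_\gamma{}^\beta(x,\xi)
- E_\alpha{}^{\gamma j}(x)\,\partial_{x^j} t_\gamma{}^\beta(x,\xi)\,,
\]
where $t$ is the full symbol of $(-\boldsymbol{\Delta})^{-(s+1)/2}$.

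At the top degree $-s$, substituting the principal symbol $t_\gamma{}^\beta = \|\xi\|^{-(s+1)}\delta_\gamma{}^\beta$ of $(-\boldsymbol{\Delta})^{-(s+1)/2}$ and taking the matrix trace gives $-iE_\alpha{}^{\alpha\mu}\xi_\mu\|\xi\|^{-(s+1)}$, which vanishes by total antisymmetry of $E$. Hence $(A^{(s)})_\prin=0$ and the order of $A^{(s)}$ is at most $-s-1$.

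At degree $-s-1$ there are two contributions to the matrix trace. The $|\boldsymbol{\kappa}|=1$ piece evaluated on the principal symbol of $t$ gives $-E_\alpha{}^{\alpha j}\partial_{x^j}\|\xi\|^{-(s+1)}=0$, again by antisymmetry, while the $|\boldsymbol{\kappa}|=0$ piece evaluated on the subleading homogeneous part $t^{(\mathrm{sub})}{}_\gamma{}^\beta$ of $t$ produces the remaining contribution $-iE_\alpha{}^{\gamma\mu}\xi_\mu\,t^{(\mathrm{sub})}{}_\gamma{}^\alpha$. To show that this also vanishes I would invoke the self-adjointness of $A^{(s)}$, which follows because $P_+-P_-$ and $(-\boldsymbol{\Delta})^{-s/2}$ commute (both are diagonal in the eigenbasis of $\curl$; cf.~\eqref{Pplus-Pminus alterntive representation}) and by property~(ii) of the matrix trace. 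Combined with the already-established vanishing of $(A^{(s)})_\prin$, self-adjointness forces the next homogeneous component of the (scalar) operator $A^{(s)}$ to be real-valued. But the displayed expression is purely imaginary, since $-\boldsymbol{\Delta}$ is a real symmetric differential operator and so its subleading left symbol is a real tensor on $T^*M$. A quantity that is simultaneously real and purely imaginary must be zero, so the degree-$(-s-1)$ component of $A^{(s)}$ vanishes and the order of $A^{(s)}$ is $-s-2$.

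The main technical obstacle is making precise the two reality claims above: that the subleading component of the symbol of the complex power $(-\boldsymbol{\Delta})^{-(s+1)/2}$ is real-valued, and that the subleading left symbol of $A^{(s)}$ coincides with its subprincipal symbol in the sense of~\cite[Definition~3.2]{curl}. The first can be handled by representing the complex power via a Cauchy-type integral of the resolvent of $-\boldsymbol{\Delta}$ and tracking reality through the symbolic expansion; the second is automatic at the relevant order once $(A^{(s)})_\prin=0$, because the correction between subleading and subprincipal is assembled from derivatives of the (now vanishing) principal symbol.
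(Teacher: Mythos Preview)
Your argument for the vanishing of the degree-$(-s-1)$ component has a genuine gap: the parity claim is backwards. In the paper's convention~\eqref{11 June 2021 equation 1} the left symbol of a differential operator with real coefficients has \emph{purely imaginary} odd-degree homogeneous components, not real ones; see formula~\eqref{10 August 2023 equation 16}, where the degree-$1$ part $h_1$ of the symbol of $-\boldsymbol{\Delta}$ carries an explicit factor of $i$. Tracking this through the resolvent and the contour integral~\eqref{q as complex integral} shows (cf.~\eqref{16 November 2023 equation 7}) that the subleading component $t^{(\mathrm{sub})}=q_{-s-2}$ of the symbol of $(-\boldsymbol{\Delta})^{-(s+1)/2}$ is purely imaginary as well. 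Consequently the quantity $-i\,E_\alpha{}^{\gamma\mu}\xi_\mu\,t^{(\mathrm{sub})}{}_\gamma{}^\alpha$ is \emph{real}, not purely imaginary, and self-adjointness yields no contradiction. Your ``real and purely imaginary implies zero'' step therefore does not go through.

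The paper closes this gap differently: rather than a parity argument, it invokes the covariant subprincipal calculus for compositions \eqref{proof lemma As is of order -s-2 equation 5}--\eqref{proof lemma As is of order -s-2 equation 6} and shows directly that each of the three terms vanishes. The key nontrivial input is Lemma~\ref{lemma subprincipal symbol powers of the Hodge Laplacian}, namely $[(-\boldsymbol{\Delta})^{-(s+1)/2}]_\sub=0$, which amounts to the observation that $q_{-s-2}$ vanishes at the centre of geodesic normal coordinates (because $h_1=O(|x|)$ and $\partial_x\|\xi\|^2=O(|x|)$ there). That is the fact you would need to rescue your computation: the remaining term is zero not for reality reasons but because $t^{(\mathrm{sub})}$ itself vanishes pointwise in the invariant sense. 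Note also that your identification of $\operatorname{tr}[q_{-s-1}]$ with the degree-$(-s-1)$ component of the symbol of $A^{(s)}=\operatorname{\mathfrak{tr}} Q$ is only correct in normal coordinates (where $\partial_y Z|_{y=x}=0$); in general coordinates the parallel transport contributes at this order, which is precisely why the paper works with the invariant subprincipal symbol and the identity~\eqref{sub of tr is tr of sub}.
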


\begin{proof}
The claim is equivalent to the following two statements:
\begin{equation}
\label{proof lemma As is of order -s-2 equation 1}
(A^{(s)})_\prin=0\,,
\end{equation}
\begin{equation}
\label{proof lemma As is of order -s-2 equation 2}
(A^{(s)})_\sub=0\,.
\end{equation}
Here the subprincipal symbol of a pseudodifferential operator acting on 1-forms is defined in accordance with \cite[Definition~3.2]{curl}.

Formula \eqref{proof lemma As is of order -s-2 equation 1} follows from \eqref{alternative representation for As}, \eqref{prin of tr is tr of prin}, \eqref{principal symbol curl}
and the fact that
\begin{equation}
\label{proof lemma As is of order -s-2 equation 4}
[(-\boldsymbol{\Delta})^{-\frac{s+1}2}]_\prin=\|\xi\|^{-\frac{s+1}2}\, \mathrm{I}
\end{equation}
is proportional to the identity matrix $\operatorname{I}\,$.

Theorem~3.8 from \cite{curl} tells us that
\begin{multline}
\label{proof lemma As is of order -s-2 equation 5}
[\curl\, (-\boldsymbol{\Delta})^{-\frac{s+1}2}]_\sub=\curl_\prin [(-\boldsymbol{\Delta})^{-\frac{s+1}2}]_\sub+\curl_\sub [(-\boldsymbol{\Delta})^{-\frac{s+1}2}]_\prin
\\
+
\frac{i}2\{\{\curl_\mathrm{prin},[(-\boldsymbol{\Delta})^{-\frac{s+1}2}]_\prin\}\}\,,
\end{multline}
where 
\begin{multline}
\label{proof lemma As is of order -s-2 equation 6}
\{\{Q_\mathrm{prin},R_\mathrm{prin}\}\}_\alpha{}^\beta
:=
\left(
\frac{\partial [Q_\mathrm{prin}]_\alpha{}^\kappa}{\partial x^\gamma}
-
\Gamma^{\alpha'}{}_{\gamma\alpha}[Q_\mathrm{prin}]_{\alpha'}{}^\kappa
+
\Gamma^\kappa{}_{\gamma\kappa'}[Q_\mathrm{prin}]_\alpha{}^{\kappa'}
\right)
\frac{\partial[R_\mathrm{prin}]_\kappa{}^\beta}{\partial\xi_\gamma}
\\
-
\frac{\partial[Q_\mathrm{prin}]_\alpha{}^\kappa}{\partial\xi_\gamma}
\left(
\frac{\partial [R_\mathrm{prin}]_\kappa{}^\beta}{\partial x^\gamma}
-
\Gamma^{\kappa'}{}_{\gamma\kappa}[R_\mathrm{prin}]_{\kappa'}{}^\beta
+
\Gamma^\beta{}_{\gamma\beta'}[R_\mathrm{prin}]_\kappa{}^{\beta'}
\right)
\end{multline}
is the generalised Poisson bracket. Furthermore, \cite[Lemma~3.6]{curl} tells us that 
\begin{equation}
\label{proof lemma As is of order -s-2 equation 7}
\curl_\sub=0
\end{equation}
and Lemma~\ref{lemma subprincipal symbol powers of the Hodge Laplacian} tells us that 
\begin{equation}
\label{proof lemma As is of order -s-2 equation 8}
[(-\boldsymbol{\Delta})^{-\frac{s+1}2}]_\sub=0.
\end{equation}
Finally, a straightforward calculation involving \eqref{principal symbol curl}, \eqref{proof lemma As is of order -s-2 equation 4} and \eqref{proof lemma As is of order -s-2 equation 6} gives us
\begin{equation}
\label{proof lemma As is of order -s-2 equation 9}
\{\{\curl_\mathrm{prin},[(-\boldsymbol{\Delta})^{-\frac{s+1}2}]_\prin\}\}=0.
\end{equation}
Substituting \eqref{proof lemma As is of order -s-2 equation 7}--\eqref{proof lemma As is of order -s-2 equation 9} into \eqref{proof lemma As is of order -s-2 equation 5} we obtain
\begin{equation}
\label{proof lemma As is of order -s-2 equation 10}
[\curl\, (-\boldsymbol{\Delta})^{-\frac{s+1}2}]_\sub=0\,.
\end{equation}
Formula \eqref{proof lemma As is of order -s-2 equation 2} then follows from \eqref{alternative representation for As}, \eqref{sub of tr is tr of sub} and \eqref{proof lemma As is of order -s-2 equation 10}.
\end{proof}

In view of Lemma~\ref{lemma As is of order -s-2}, formula~\eqref{alternative representation for As}, and formula \eqref{prin of tr is tr of prin}, in order to establish that $A^{(s)}$ is of order $-s-3$ it suffices to compute the homogeneous component of the symbol of the operator $\curl\, (-\boldsymbol{\Delta})^{-\frac{s+1}{2}}$ of degree $-s-2$ and show that its pointwise matrix trace vanishes.

\

Let
\begin{equation}
\label{symbol q of Delta -s-1/2}
q_{\alpha}{}^\beta(x,\xi)\sim \|\xi\|^{-s-1}\,\delta_{\alpha}{}^\beta +[q_{-s-2}]_{\alpha}{}^\beta(x,\xi)+ [q_{-s-3}]_{\alpha}{}^\beta(x,\xi)+\dots
\end{equation}
be the full symbol of the operator $(-\boldsymbol{\Delta})^{-\frac{s+1}{2}}$. Then the full symbol $t_{\alpha}{}^\beta$ of the operator $\curl\, (-\boldsymbol{\Delta})^{-\frac{s+1}{2}}$ is given by
\begin{equation}
\label{symbol curl Delta -s-1/2}
t_{\alpha}{}^\beta(x,\xi) = E_\alpha{}^{\mu\gamma}(x) \left(i \xi_\mu\, q_{\gamma}{}^\beta(x,\xi) + \frac{\partial q_{\gamma}{}^\beta}{\partial x^\mu}(x,\xi) \right)\,.
\end{equation}
In writing \eqref{symbol curl Delta -s-1/2} we used \eqref{principal symbol curl}, \eqref{higher derivatives of curl prin are zero}, and the fact
that, given two pseudodifferential operators $A$ and $B$ with symbols $a$ and $b$, the symbol $\sigma_{AB}$ of their composition $AB$ is given by the formula
\begin{equation}
\label{5 November 2021 equation 1}
\sigma_{AB}\sim\sum_{k=0}^{\infty} \frac{1}{i^k k!}  \dfrac{\partial^k a}{\partial \xi_{\alpha_1}\dots \partial \xi_{\alpha_k}}\dfrac{\partial^k b}{\partial x^{\alpha_1}\dots \partial x^{\alpha_k}}\,,
\end{equation}
see \cite[Theorem~3.4]{shubin}. The identity \eqref{symbol curl Delta -s-1/2} effectively reduces the task at hand to the analysis of the symbol of $(-\boldsymbol{\Delta})^{-\frac{s+1}{2}}$, the power of an elliptic differential operator.

%
%
%

\

Let us fix an arbitrary point $z\in M$ and work in geodesic normal coordinates centred at $z$. In our chosen coordinate system the operator $A^{(s)}$ (modulo $\Psi^{-\infty}$) reads
\begin{equation}
\label{As in normal coordinates}
A^{(s)}: f(x) \mapsto
\frac{1}{(2\pi)^3}
\int e^{i(x-y)^\mu\xi_\mu}\,
t_{\alpha}{}^\beta(x,\xi)
\,
Z_\beta{}^\alpha(y,x)
\,
f(y)
\,\dr y\,\dr\xi\,,
\end{equation}
where $\,t_{\alpha}{}^\beta(x,\xi)\,$ is defined in accordance with
\eqref{symbol curl Delta -s-1/2}
and
\eqref{symbol q of Delta -s-1/2}.

Next, we observe that \eqref{As in normal coordinates} can be recast as
\begin{equation}
\label{As in normal coordinates v2}
A^{(s)}=A^{(s)}_\mathrm{diag}+A^{(s)}_\mathrm{pt}\,,
\end{equation}
where 
\begin{equation}
\label{As diag}
A^{(s)}_\mathrm{diag}:=\frac{1}{(2\pi)^3}
\int e^{i(x-y)^\mu\xi_\mu}\,
t_{\alpha}{}^\alpha(x,\xi)
\,
f(y)
\,\dr y\,\dr\xi
\end{equation}
and 
\begin{equation}
\label{As pt}
A^{(s)}_\mathrm{pt}:=A^{(s)}-A^{(s)}_\mathrm{diag}.
\end{equation}
Here the subscript ``pt'' stands for ``parallel transport''. Let us emphasise that the decomposition \eqref{As in normal coordinates v2} is not invariant, i.e., it relies on our particular choice of local coordinates. However, it turns out to be very convenient when carrying out the explicit calculations below.

The following lemma establishes that
the contribution from $A^{(s)}_\mathrm{pt}$
to the principal symbol of the parameter-dependent asymmetry operator $A^{(s)}$ at the point $z$
can be disregarded.

\begin{lemma}
\label{lemma Apt}
Let $a^{(s)}_{\mathrm{pt}}(x,\xi)\sim \sum_{j=0}^{+\infty}(t_{\mathrm{pt}})_{-s-j}(x,\xi)$ be the full symbol of the operator 
$A^{(s)}_\mathrm{pt}\,$. We have
\begin{equation}
\label{lemma Apt equation 1}
(a^{(s)}_{\mathrm{pt}})_{-s-j}(z,\xi)=0 \quad \text{for}\quad j=0,1,2,3\,.
\end{equation}
\end{lemma}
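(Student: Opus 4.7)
The plan is to start from the representation of the full symbol of $A^{(s)}_\mathrm{pt}$ as
\[
a^{(s)}_\mathrm{pt}(x,\xi)
=
\mathcal{S}_\mathrm{left}\bigl[\,t_\alpha{}^\beta(x,\xi)\bigl(Z_\beta{}^\alpha(y,x)-\delta_\beta{}^\alpha\bigr)\,\bigr],
\]
which follows from combining the formula for the symbol of $A^{(s)}$ with \eqref{As diag}--\eqref{As pt} (using that $\mathcal{S}_0[t_\alpha{}^\beta(x,\xi)\delta_\beta{}^\alpha]=t_\alpha{}^\alpha(x,\xi)$ and $\mathcal{S}_{-k}$ annihilates $\delta_\beta{}^\alpha$ for $k\ge 1$), and then, in the chosen geodesic normal coordinates centred at $x=z=0$, decompose the degree $-s-j$ homogeneous component as
\[
(a^{(s)}_\mathrm{pt})_{-s-j}(0,\xi)
=
\sum_{k=0}^{j}\mathcal{S}_{-k}\bigl[\,t_{-s-j+k}(x,\xi)(Z(y,x)-\mathrm{I})\,\bigr]\Big|_{y=x=0}.
\]
The driving observation is that, by \eqref{prop: expansion of parallel transoport equation 2}, the parallel transport satisfies $Z_\beta{}^\alpha(y,0)-\delta_\beta{}^\alpha=O(|y|^2)$, so both $Z-\mathrm{I}$ and its first $y$-derivative vanish at $y=0$. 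This immediately kills every contribution with $k\in\{0,1\}$, settling the cases $j=0$ and $j=1$.

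For $j=2$ only the $k=2$ summand survives, and for $j=3$ only $k=2$ and $k=3$ do. The $k=2$ summand at $j=3$ also vanishes: formula \eqref{symbol curl Delta -s-1/2}, the identity $\partial_{x^\mu}g^{\alpha\beta}(0)=0$ in normal coordinates, and Lemma~\ref{lemma subprincipal symbol powers of the Hodge Laplacian} together yield $q_{-s-2}(0,\xi)=0$, whence $t_{-s-1}(0,\xi)\equiv 0$ as a function of $\xi$ and therefore $\partial_\xi^2 t_{-s-1}(0,\xi)=0$. Thus the proof reduces to verifying
\[
\mathcal{S}_{-2}[t_{-s}(x,\xi)(Z-\mathrm{I})]\big|_{y=x=0}=0
\qquad\text{and}\qquad
\mathcal{S}_{-3}[t_{-s}(x,\xi)(Z-\mathrm{I})]\big|_{y=x=0}=0.
\]

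Both identities rest on the same symmetry argument used in the proofs of Propositions~\ref{proposition trace curl} and~\ref{proposition trace curl cubed}. Indeed, at the origin of the chosen coordinates $t_{-s,\alpha}{}^\beta(0,\xi)=i\,\varepsilon_\alpha{}^{\beta\mu}\xi_\mu|\xi|^{-s-1}$ carries the totally antisymmetric symbol $\varepsilon$, while $\partial_y^2(Z-\mathrm{I})|_{y=0}$ is a linear combination of components of $\Riem(0)$ and $\partial_y^3(Z-\mathrm{I})|_{y=0}$ is a linear combination of components of $\partial^2\Gamma(0)$, hence of $\nabla\Ric(0)$. Since the Weyl tensor vanishes in dimension three, $\Riem$ is expressible in terms of $\Ric$, and every surviving contribution reduces to a complete contraction of schematic form $\varepsilon\cdot\Ric\cdot\xi^{\otimes m}$ (for $j=2$) or $\varepsilon\cdot\nabla\Ric\cdot\xi^{\otimes m}$ (for $j=3$). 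In every such contraction $\varepsilon$ must saturate a pair of indices of the symmetric tensor $\Ric$ or $\nabla\Ric$, forcing the result to vanish. The main delicate point is the bookkeeping for the $j=3$ case: the triple $\xi$-differentiation of $|\xi|^{-s-1}\xi_\mu$ combined with the permutations coming from $\partial_y^3(Z-\mathrm{I})|_{y=0}$ produces several tensorial terms, and one has to verify that each of them has the schematic form above. Once this enumeration is complete, the symmetry cancellation applies uniformly and the lemma follows.
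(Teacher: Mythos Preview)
Your argument for $j=0,1$ is fine, and your treatment of $j=2$---reducing to contractions of the form $\varepsilon\cdot\Ric\cdot\xi^{\otimes m}$ and observing that every such scalar vanishes---is correct (the paper instead uses the pair-exchange symmetry $\Riem_{\alpha\mu\kappa\nu}=\Riem_{\kappa\nu\alpha\mu}$ directly, which works in any dimension, but your dimension-three route is valid). You are also right that $t_{-s-1}(0,\xi)=0$ kills the $k=2$ summand at $j=3$.

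The gap is in your treatment of the $k=3$ summand at $j=3$. Your claim that ``$\varepsilon$ must saturate a pair of indices of the symmetric tensor $\nabla\Ric$, forcing the result to vanish'' is false: $\nabla_\alpha\Ric_{\beta\gamma}$ is symmetric only in $\beta,\gamma$, not in all three indices, and the scalar
\[
\varepsilon^{\alpha\beta\gamma}\,\nabla_\alpha\Ric_\beta{}^\rho\,\xi_\gamma\xi_\rho
\]
is a perfectly legitimate nonzero invariant of precisely the schematic form $\varepsilon\cdot\nabla\Ric\cdot\xi^{\otimes m}$. Indeed, this is exactly the expression appearing in the principal symbol \eqref{Theorem order of A(s) equation 1} of $A^{(s)}$ itself. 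So the invariant-theoretic argument cannot by itself force $(a^{(s)}_\mathrm{pt})_{-s-3}(z,\xi)=0$; what is needed is that the \emph{coefficient} of this particular invariant, as produced by the parallel-transport expansion, happens to be zero. The analogy with Propositions~\ref{proposition trace curl} and~\ref{proposition trace curl cubed} is misleading: in those propositions there are no $\xi$'s available (one is computing a differential operator, not a symbol), and without $\xi$'s the only candidate scalar $\varepsilon^{\alpha\beta\gamma}\nabla_\alpha\Ric_{\beta\gamma}$ genuinely vanishes by symmetry.

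The paper's proof (Lemma~\ref{lemma apt -3}) does the honest computation: it symmetrises $\partial_\mu\partial_\nu\Gamma^\alpha{}_{\sigma\kappa}(0)$ over the three indices $\sigma,\mu,\nu$ (legitimate because it is contracted against $\partial^3_{\xi_\sigma\xi_\mu\xi_\nu}$), uses the known identity that this symmetrisation equals $\tfrac12\nabla_\nu\Riem_{\alpha\sigma\mu\kappa}(0)$, and then exploits the Riemann symmetries to show that after a further symmetrisation in $\mu,\sigma$ the result is symmetric in $\alpha,\kappa$, hence annihilated by the antisymmetric factor $\varepsilon^{\alpha\gamma\kappa}$. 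This is a genuine cancellation, not a consequence of the schematic form.
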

\begin{proof}
The claim \eqref{lemma Apt equation 1} for $j=0$ and $j=1$ follows at once from
\eqref{prop: expansion of parallel transoport equation 2}
and
\eqref{aux4}.
The claim \eqref{lemma Apt equation 1} for $j=2$ and $j=3$ can be proved following the strategy from \cite[subsection~6.2]{curl}. Detailed arguments are given in Appendix~\ref{Proof of Lemma Apt}.
\end{proof}

\

Let us further prepare the ground for the final step in the proof of Theorem~\ref{Theorem order of A(s)}, part (a).

In what follows we assume, for simplicity, that
\begin{equation}
\label{s is greater than -1}
s>-1\,.
\end{equation}
The general case $s\in\mathbb{R}$ can be handled by means of \cite[Proposition~10.1]{shubin}.

Let
$
R_\lambda:=\left(-\boldsymbol{\Delta}-\,\lambda\,\mathrm{Id}\right)^{-1}
$
be the resolvent of $-\boldsymbol{\Delta}$. For fixed $(x,\xi)\in T^*M\setminus \{0\}$ and $\rho=\rho(x,\xi)<\|\xi\|^2$, let $\Gamma=\Gamma(x,\xi)=\Gamma_1\cup \Gamma_2 \cup \Gamma_3$ be the contour in the complex plane defined by
$$\Gamma_1:=\{\lambda \in \mathbb{C}\ | \ \lambda=re^{i\pi}, \, \rho<r<+\infty\},$$
$$\Gamma_2:=\{\lambda \in \mathbb{C}\ | \ \lambda=\rho \,e^{i\theta}, \, -\pi<\theta<\pi\},$$
$$\Gamma_3:=\{\lambda \in \mathbb{C}\ | \ \lambda=re^{-i\pi}, \, \rho<r<+\infty\},$$
with orientation prescribed as in the Figure~\ref{contour}.

\begin{figure}[h!]
\centering
\includegraphics[scale=1]{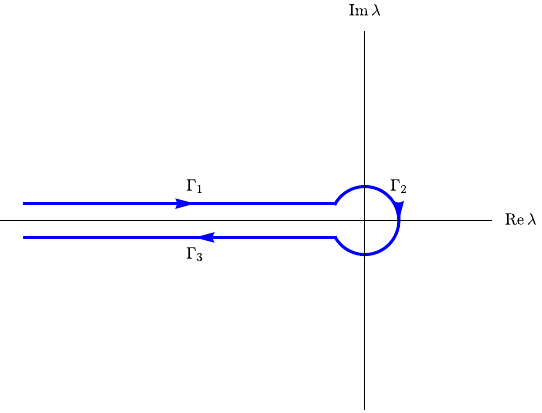}
\caption{Contour of integration.}
\label{contour}
\end{figure}

\

The classical theory of complex powers of elliptic pseudodifferential operators (see, e.g., \cite{seeley}, \cite[\S~10]{shubin}) tells us that
under the assumption \eqref{s is greater than -1}
\begin{equation}
\label{complex power of Delta to appropriate power}
(-\boldsymbol{\Delta})^{-\frac{s+1}{2}}=\frac{i}{2\pi}\ointclockwise_\Gamma \lambda^{-\frac{s+1}{2}} R_\lambda\,\dr \lambda\,,
\end{equation}
so that, arguing as in \cite[\S~11.2, formula (11.10)]{shubin}, we have
\begin{equation}
\label{q as complex integral}
[q_{-s-j-1}]_{\alpha}{}^\beta(x,\xi)=\frac{i}{2\pi}\ointclockwise_\Gamma \lambda^{-\frac{s+1}{2}} [r_{-2-j}]_\alpha{}^\beta(x,\xi,\lambda)\,\dr\lambda\,,\qquad j=0,1,2,\dots,
\end{equation}
where
\begin{equation}
\label{symbol Rlambda}
r_\alpha{}^\beta(x,\xi,\lambda)\sim \sum_{j=0}^{+\infty} [r_{-2-j}]_\alpha{}^\beta(x,\xi,\lambda), \qquad [r_{-2-j}]_\alpha{}^\beta(x,t \xi,t^2\lambda)=t^{-2-j}[r_{-2-j}]_\alpha{}^\beta(x,\xi,\lambda)\quad \forall t>0\,,
\end{equation}
is the full symbol of $R_\lambda$ as a pseudodifferential operator depending on the parameter $\lambda$. Here the branch of $\lambda^{-\frac{s+1}{2}}$ is determined by making a cut along the negative real semi-axis and requiring that $\lambda^{-\frac{s+1}{2}}=1$ when $\lambda=1$.

\

Formula \eqref{q as complex integral} reduces the the analysis of the symbol of $(-\boldsymbol{\Delta})^{-\frac{s+1}{2}}$ to the analysis of the symbol of $R_\lambda\,$.

\

It is easy to see that
\begin{equation}
\label{principal symbol R lambda}
[(R_\lambda)_\prin]_\alpha{}^\beta (x,\xi,\lambda)=[r_{-2}]_{\alpha}{}^\beta(x,\xi,\lambda)=\frac{1}{\|\xi\|^2-\lambda}\,\delta_\alpha{}^\beta\,.
\end{equation}
Note that $(R_\lambda)_\prin(x,\xi,\lambda)$ is holomorphic in $\{|\lambda|\le \rho\}\subset \mathbb{C}$.

\

The following is an auxiliary lemma which will prove to be useful in subsequent calculations.

\begin{lemma}
\label{lemma residues}
We have 
\begin{equation*}
\frac{i}{2\pi}\ointclockwise_{\Gamma}\frac{\lambda^{-\frac{s+1}{2}}}{(\|\xi\|^2-\lambda)^n}\,\dr \lambda= C_n(s)\,\|\xi\|^{-s-2n+1}\,,
\end{equation*}
where
\begin{equation*}
C_n(s):=
\begin{cases}
1 &\text{for}\quad n=1,\\
\frac{s+1}{2} &\text{for}\quad n=2,\\
 \frac{(s+1)(s+3)}{8} &\text{for}\quad n=3,\\
\frac{(s+1)(s+3)(s+5)}{48}&\text{for}\quad n=4,\\
 \frac{(s+1)(s+3)(s+5)(s+7)}{384}&\text{for}\quad n=5.\\
\end{cases}
\end{equation*}
\end{lemma}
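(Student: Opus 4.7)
The plan is to reduce all five identities to the single identity at $n=1$ by differentiating in a real parameter. For $a>\rho$, introduce
\[
F_n(a) := \frac{i}{2\pi}\ointclockwise_{\Gamma}\frac{\lambda^{-\frac{s+1}{2}}}{(a-\lambda)^n}\,\dr\lambda,
\]
so that on $\Gamma$ the distance $|a-\lambda|$ is bounded below by a positive constant and $F_n$ depends holomorphically on $a$ in a neighbourhood of $\|\xi\|^2$. Differentiation under the integral sign is therefore legitimate, and the elementary identity $\tfrac{d^{n-1}}{da^{n-1}}(a-\lambda)^{-1} = (-1)^{n-1}(n-1)!\,(a-\lambda)^{-n}$ yields
\[
F_n(a) = \frac{(-1)^{n-1}}{(n-1)!}\,\frac{d^{n-1}}{da^{n-1}}F_1(a).
\]

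The base case $F_1(a) = a^{-(s+1)/2}$ is essentially already built into the paper: it is the scalar avatar of the Seeley--Shubin representation \eqref{complex power of Delta to appropriate power} and is precisely what ensures that inserting the principal symbol \eqref{principal symbol R lambda} of $R_\lambda$ reproduces the principal symbol \eqref{proof lemma As is of order -s-2 equation 4} of $(-\boldsymbol{\Delta})^{-\frac{s+1}{2}}$. For a self-contained verification one collapses $\Gamma$ onto the branch cut: parametrising $\Gamma_1$ and $\Gamma_3$ with phases $\lambda^{-(s+1)/2} = r^{-(s+1)/2} e^{\mp i\pi(s+1)/2}$, the two line integrals combine to $-2i\sin\!\bigl(\pi(s+1)/2\bigr)\int_\rho^{+\infty} r^{-(s+1)/2}(a+r)^{-1}\,\dr r$; the small-circle $\Gamma_2$ contribution vanishes as $\rho\to 0^+$ thanks to the assumption $s>-1$; and the resulting Mellin integral is evaluated via the classical identity $\int_0^{+\infty} r^{\alpha-1}(1+r)^{-1}\,\dr r = \pi/\sin(\pi\alpha)$ with $\alpha = (1-s)/2$. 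The trigonometric factors collapse and, together with the prefactor $i/(2\pi)$, deliver $F_1(a) = a^{-(s+1)/2}$.

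Differentiating $a^{-(s+1)/2}$ directly then gives
\[
F_n(a) = \frac{(s+1)(s+3)\cdots(s+2n-3)}{2^{n-1}(n-1)!}\, a^{-(s+2n-1)/2},
\]
and specialising $a = \|\xi\|^2$ yields the lemma with $C_n(s) = (s+1)(s+3)\cdots(s+2n-3)/(2^{n-1}(n-1)!)$; direct arithmetic reproduces the tabulated values for $n=1,\ldots,5$. None of these steps is a genuine obstacle: the one delicate point is the bookkeeping of the branch and orientation of $\Gamma$ in the base case, but this is essentially forced by the compatibility of \eqref{complex power of Delta to appropriate power} with the paper's own principal-symbol conventions, so one could even skip the Mellin computation entirely and cite that consistency.
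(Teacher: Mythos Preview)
Your argument is correct. The paper's own proof is a single line --- ``straightforward consequence of Cauchy's residue theorem'' --- meaning one closes $\Gamma$ at infinity (legitimate since the integrand decays like $|\lambda|^{-(s+1)/2-n}$ for $s>-1$) and reads off the residue at the order-$n$ pole $\lambda=\|\xi\|^2$; the residue formula for a higher-order pole is precisely $\frac{1}{(n-1)!}\frac{d^{n-1}}{d\lambda^{n-1}}\lambda^{-(s+1)/2}\big|_{\lambda=\|\xi\|^2}$, so the two routes land on the same derivative computation.

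The genuine difference is in how you handle the base case $n=1$. The residue-theorem route gives $F_1(a)=a^{-(s+1)/2}$ instantly (simple pole), whereas your collapse-to-the-cut Mellin computation is more laborious and, strictly speaking, only works directly on the strip $-1<s<1$: the small-circle contribution vanishes as $\rho\to0^+$ only when $s<1$, and the Mellin integral $\int_0^\infty r^{(1-s)/2-1}(1+r)^{-1}\,\dr r$ converges at infinity only when $s>-1$. You correctly anticipate this by noting that the base case is already forced by the consistency of \eqref{complex power of Delta to appropriate power} with \eqref{proof lemma As is of order -s-2 equation 4}, which is the cleaner justification and what the paper's one-liner amounts to.
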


\begin{proof}
The claim is a straightforward consequence of Cauchy's residue theorem.
\end{proof}

We are now in a position to state the main result of this subsection.

\begin{proposition}
\label{proposition As is of order -s-3}
The parameter-dependent asymmetry operator $A^{(s)}$ is a pseudodifferential operator of order $-s-3$.
\end{proposition}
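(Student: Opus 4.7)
The plan is to sharpen Lemma~\ref{lemma As is of order -s-2} by showing that the principal symbol of $A^{(s)}$, viewed as a pseudodifferential operator of order $-s-2$, vanishes identically on $T^*M\setminus\{0\}$. Since this is a pointwise claim, I fix an arbitrary $z\in M$ and work in geodesic normal coordinates centered at $z$, where $g(z)=\mathrm{I}$, $\partial g(z)=0$ and $\Gamma(z)=0$. Using the local decomposition $A^{(s)}=A^{(s)}_{\mathrm{diag}}+A^{(s)}_{\mathrm{pt}}$ from~\eqref{As in normal coordinates v2}, together with Lemma~\ref{lemma Apt}, the parallel-transport piece is invisible at $z$ to all symbol components down to order $-s-3$, so only the scalar symbol $t_\alpha{}^\alpha(x,\xi)$ of $A^{(s)}_{\mathrm{diag}}$ needs to be examined, and only at the single homogeneous degree $-s-2$.

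Expanding~\eqref{symbol curl Delta -s-1/2} in homogeneous components in $\xi$, the order $-s-2$ part of the trace reads
\begin{equation*}
[t_{-s-2}]_\alpha{}^\alpha(z,\xi)
=
iE_\alpha{}^{\mu\gamma}(z)\,\xi_\mu\,[q_{-s-3}]_\gamma{}^\alpha(z,\xi)
+
E_\alpha{}^{\mu\gamma}(z)\,\partial_{x^\mu}[q_{-s-2}]_\gamma{}^\alpha(x,\xi)\bigr|_{x=z}\,,
\end{equation*}
where the homogeneous components $[q_{-s-k}]$ of the full symbol of $(-\boldsymbol{\Delta})^{-(s+1)/2}$ are obtained, via the Seeley contour formula~\eqref{complex power of Delta to appropriate power}--\eqref{q as complex integral}, as $\lambda$-integrals of the subleading components $[r_{-3}]$ and $[r_{-4}]$ of the resolvent symbol. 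The heart of the proof is then to compute $[r_{-3}](z,\xi,\lambda)$, $[r_{-4}](z,\xi,\lambda)$ and $\partial_{x^\mu}[r_{-3}](x,\xi,\lambda)|_{x=z}$ from the standard resolvent recursion applied to $-\boldsymbol{\Delta}$ on $1$-forms. Using the Weitzenböck identity $-\boldsymbol{\Delta}=\nabla^*\nabla+\Ric$ together with the normal-coordinate simplifications, the order-$1$ part of the full symbol of $-\boldsymbol{\Delta}$ vanishes at $z$, so that the surviving contributions collapse to the Ricci tensor (and, for the $[r_{-4}]$ piece, its first covariant derivative) times monomials in $\xi$ divided by powers of $\|\xi\|^2-\lambda$. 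Lemma~\ref{lemma residues} then evaluates the $\lambda$-integrals explicitly. The two pieces of $[t_{-s-2}]_\alpha{}^\alpha(z,\xi)$, once contracted with the Levi-Civita tensor $E_\alpha{}^{\mu\gamma}(z)$, end up as total contractions of $E$ against tensors symmetric in the $(\alpha,\gamma)$ pair; total antisymmetry of $E$ forces the vanishing. Since $z\in M$ is arbitrary, $(A^{(s)})_\prin\equiv 0$, which upgrades $A^{(s)}\in\Psi^{-s-2}$ to $A^{(s)}\in\Psi^{-s-3}$.

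The main obstacle is the matrix-valued resolvent bookkeeping for $-\boldsymbol{\Delta}$ on $1$-forms at this second nontrivial order: one must keep track of the Weitzenböck curvature contribution to the order-$0$ symbol, the $x$-derivative at $z$ of the order-$1$ symbol (which is nonzero even though the symbol itself vanishes there, since it is a linear combination of second derivatives of the metric at $z$, i.e.~curvature), and the $\partial_\xi^2 a_2\cdot\partial_x^2 r_{-2}$ contributions in the composition formula. The payoff is purely algebraic --- every surviving tensor turns out to be symmetric in the indices that feed into the antisymmetric tensor $E_\alpha{}^{\mu\gamma}$ --- but verifying that this cancellation is exact, without relying on brittle sign or index conventions, is where the bulk of the work sits. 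A dimension-three shortcut, expressing Riemann in terms of Ricci, can be used to shorten the final symmetry check in the same spirit as in the proof of Proposition~\ref{proposition trace curl cubed}.
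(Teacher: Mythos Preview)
Your proposal is correct and follows essentially the same route the paper outlines: fix a point, work in geodesic normal coordinates, use the diagonal/parallel-transport splitting together with Lemma~\ref{lemma Apt} to reduce to the matrix trace of $t_{-s-2}$, compute the relevant resolvent components via Seeley's calculus and Lemma~\ref{lemma residues}, and conclude by the antisymmetry of $E$ against the symmetric tensors that remain. The only cosmetic difference is that you invoke the Weitzenb\"ock identity to organise the symbol of $-\boldsymbol{\Delta}$, whereas the paper appeals directly to the explicit expansions of Theorem~\ref{theorem symbol of the Hodge Laplacian}; note in particular that, since here one cannot impose the extra simplification $\Riem(z)=0$ used in the proof of part~(b), the surviving pieces at degree $-s-2$ involve the full Riemann tensor (via $[a_1]$ and the second $x$-derivatives of $\|\xi\|^2$) and not merely $\Ric$ or $\nabla\Ric$, so your final symmetry check must handle those terms --- the dimension-three reduction of $\Riem$ to $\Ric$ you mention is exactly the right tool.
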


The proof of Proposition~\ref{proposition As is of order -s-3} consists of a careful examination of the symbol of the operator \eqref{alternative representation for As} in geodesic normal coordinates by means of formulae \eqref{symbol q of Delta -s-1/2}, \eqref{symbol curl Delta -s-1/2}, Lemma~\ref{lemma Apt}, formulae \eqref{complex power of Delta to appropriate power}--\eqref{symbol Rlambda}, and the results from Appendix~\ref{Symbol of the Hodge Laplacian}. To keep the paper to a reasonable length, we decided to omit this rather long and technical proof, on the basis that it is very similar in spirit to the proof of part (b) of Theorem~\ref{Theorem order of A(s)}, which we will give in full in the next subsection. The latter will provide a `quantitative version' of the arguments required to prove Proposition~\ref{proposition As is of order -s-3}.

\subsection{The principal symbol of $A^{(s)}$}
\label{The principal symbol of As}

The goal of this subsection is to prove part (b) of Theorem~\ref{Theorem order of A(s)}. The precise expression for the principal symbol of $A^{(s)}$ will play a crucial role in the proof of the main result of our paper. This warrants providing a detailed proof.

\begin{proof}[Proof of Theorem~\ref{Theorem order of A(s)}, part (b)]
As in the previous subsection, in what follows we assume to have chosen geodesic normal coordinates centred at $z=0$. In view of Lemma~\ref{lemma Apt}, proving \eqref{Theorem order of A(s) equation 1} reduces to showing that, in the chosen coordinate system, we have
\begin{equation}
\label{proof As prin equation 1}
[t_{-s-3}]_\alpha{}^\alpha(0,\xi)
=
-\frac{(s+1)(s+3)}{6\,|\xi|^{s+5}}\,
\varepsilon^{\alpha\beta \gamma}\,
\nabla_\alpha \operatorname{Ric}_{\beta}{}^\rho(0)\, \xi_\gamma\xi_\rho
\,,
\end{equation}
where $\,t_{\alpha}{}^\beta(x,\xi)\,$ is defined in accordance with
\eqref{symbol curl Delta -s-1/2}
and
\eqref{symbol q of Delta -s-1/2}.

It is not hard to convince oneself that $A^{(s)}$ should be proportional to the covariant derivative of the Ricci tensor and the totally antisymmetric tensor $E$, see, e.g., \cite[Remark~6.8]{curl}. Therefore, without loss of generality one can assume that the metric admits the following Taylor expansion in geodesic normal coordinates:
\begin{equation}
\label{16 November 2023 equation 1}
g_{\alpha\beta}(x)
=
\delta_{\alpha\beta}
-
\frac16
(\nabla_\sigma \Riem_{\alpha\mu\beta\nu})(0)\,x^\sigma\,x^\mu\,x^\nu
+O(|x|^4).
\end{equation}
Namely, one can assume that all components of curvature (but not their covariant derivatives) vanish at the centre of the normal coordinate system $z=0$.
Formula \eqref{16 November 2023 equation 1} immediately implies 
\begin{equation*}
\label{16 November 2023 equation 3}
\rho(x)=1-\frac1{12} 
(\nabla_\sigma \Ric_{\mu\nu})(0)\,x^\sigma\,x^\mu\,x^\nu
+O(|x|^4).
\end{equation*}

Under this assumption, the symbol of the Hodge Laplacian admits the expansion given in Theorem~\ref{theorem symbol of the Hodge Laplacian} with $a_1=0$ and $a_0=0$.

To start with, let us note the following expressions for the
partial derivatives in momentum $\xi$ of $(R_\lambda)_\prin$ \eqref{principal symbol R lambda}
of order $\,\le 3\,$:
\begin{subequations}
\label{derivatives in xi of Rprin}
\begin{equation}
\label{derivatives in xi of Rprin eq 1}
[(\|\xi\|^2-\lambda)^{-1}]_{\xi_\mu}=-\frac{2g^{\mu\sigma}\xi_\sigma}{(\|\xi\|^2-\lambda)^{2}}\,,
\end{equation}
\begin{equation}
\label{derivatives in xi of Rprin eq 2}
 [(\|\xi\|^2-\lambda)^{-1}]_{\xi_\mu\xi_\nu}=-\frac{2g^{\mu\nu}}{(\|\xi\|^2-\lambda)^{2}}+\frac{8g^{\mu\sigma}g^{\nu\rho}\xi_\sigma\xi_\rho}{(\|\xi\|^2-\lambda)^{3}}\,,
\end{equation}
\begin{equation}
\label{derivatives in xi of Rprin eq 3}
  [(\|\xi\|^2-\lambda)^{-1}]_{\xi_\mu\xi_\nu\xi_\kappa}=8\frac{g^{\mu\nu}\xi^\kappa+g^{\mu\kappa}\xi^\nu+g^{\nu\kappa}\xi^\mu}{(\|\xi\|^2-\lambda)^{3}}-\frac{48 \xi^\mu\xi^\nu\xi^\kappa}{(\|\xi\|^2-\lambda)^{4}}\,.
\end{equation}
\end{subequations}
In formula \eqref{derivatives in xi of Rprin eq 3} and some subsequent formulae we employ,
for the sake of brevity, the notation
\begin{equation*}
\label{xi with raised tensor index}
\xi^\alpha:=g^{\alpha\beta}\xi_\beta\,.
\end{equation*}

\

Using formulae \eqref{5 November 2021 equation 1} and \eqref{principal symbol R lambda} one obtains
\begin{equation*}
[(R_\lambda (-\boldsymbol{\Delta}-\lambda))_{-1}]_\alpha{}^\beta=
\frac{1}{\|\xi\|^2-\lambda}[h_1]_\alpha{}^\beta+(\|\xi\|^2-\lambda)[r_{-3}]_\alpha{}^\beta\\-i [(\|\xi\|^2-\lambda)^{-1}]_{\xi_\mu}(\|\xi\|^2-\lambda)_{x^\mu}\,,
\end{equation*}
where $[h_1]_\alpha{}^\beta$ is defined in accordance with
\eqref{10 August 2023 equation 16},
\eqref{theorem symbol of the Hodge Laplacian equation 3bis}
and
\eqref{10 August 2023 equation 18}
and $[r_{-3}]_\alpha{}^\beta$ comes from \eqref{symbol Rlambda}.
Formula~\eqref{derivatives in xi of Rprin eq 1} and the identity $R_\lambda (-\boldsymbol{\Delta}-\lambda)=\mathrm{Id}$ then imply
\begin{equation*}
\label{16 November 2023 equation 6}
[r_{-3}]_\alpha{}^\beta=-\frac{1}{(\|\xi\|^2-\lambda)^2}[h_1]_\alpha{}^\beta- \frac{2i}{(\|\xi\|^2-\lambda)^{3}} g^{\mu\gamma}\xi_\gamma (\|\xi\|^2)_{x^\mu}\,\delta_\alpha{}^\beta\,.
\end{equation*}
The latter, combined with \eqref{q as complex integral} and Lemma~\ref{lemma residues}, in turn gives us
\begin{equation}
\label{16 November 2023 equation 7}
[q_{-s-2}]_\alpha{}^\beta=-\frac{s+1}{2\|\xi\|^{s+3}}[h_1]_\alpha{}^\beta- \frac{i(s+1)(s+3)}{4\|\xi\|^{s+5}} g^{\mu\gamma}\xi_\gamma (\|\xi\|^2)_{x^\mu}\,\delta_\alpha{}^\beta\,,
\end{equation}
where $[q_{-s-2}]_\alpha{}^\beta$ comes from \eqref{symbol q of Delta -s-1/2}.

Similar arguments give us
\begin{multline}
\label{16 November 2023 equation 8}
[r_{-4}]_\alpha{}^\beta
=
-\frac{1}{(\|\xi\|^2-\lambda)^2}[h_0]_\alpha{}^\beta
\\
-\frac1{(\|\xi\|^2-\lambda)^{3}}
\left[ 
2ig^{\mu\sigma}\xi_\sigma ([h_1]_\alpha{}^\beta)_{x^\mu} +g^{\mu\nu}(\|\xi\|^2)_{x^\mu x^\nu}\delta_\alpha{}^\beta
\right]
\\
+\frac{4g^{\mu\sigma}g^{\nu\rho}\xi_\sigma\xi_\rho}{(\|\xi\|^2-\lambda)^{4}}(\|\xi\|^2)_{x^\mu x^\nu}\delta_\alpha{}^\beta
+ O(|x|^2|\xi|^{-4})\,,
\end{multline}
\begin{multline}
\label{16 November 2023 equation 9}
[q_{-s-3}]_\alpha{}^\beta
=
-\frac{s+1}{2\|\xi\|^{s+3}}[h_0]_\alpha{}^\beta
\\
-\frac{(s+1)(s+3)}{8\|\xi\|^{s+5}}
\left[ 
2ig^{\mu\sigma}\xi_\sigma ([h_1]_\alpha{}^\beta)_{x^\mu} +g^{\mu\nu}(\|\xi\|^2)_{x^\mu x^\nu}\delta_\alpha{}^\beta
\right]
\\
+\frac{(s+1)(s+3)(s+5)g^{\mu\sigma}g^{\nu\rho}\xi_\sigma\xi_\rho}{12\|\xi\|^{s+7}}(\|\xi\|^2)_{x^\mu x^\nu}\delta_\alpha{}^\beta
+ O(|x|^2|\xi|^{-s-3})\,,
\end{multline}
\begin{multline}
\label{16 November 2023 equation 13}
[r_{-5}]_\alpha{}^\beta
=
\frac1{(\|\xi\|^2-\lambda)^{3}} \left(-2i\xi^\mu ([h_0]_\alpha{}^\beta)_{x^\mu} -g^{\mu\nu} ([h_1]_\alpha{}^\beta)_{x^\mu x^\nu}\right)
\\
+\frac1{(\|\xi\|^2-\lambda)^{4}}\left(4\xi^\mu\xi^\nu([h_1]_\alpha{}^\beta)_{x^\mu x^\nu}-\frac{8i}{3}(g^{\mu\nu}\xi^\sigma+g^{\mu\sigma}\xi^\nu+g^{\nu\sigma}\xi^\mu)(\|\xi\|^2)_{x^\mu x^\nu x^\sigma}\,\delta_{\alpha}{}^\beta\right)\,
\\
+\frac{16i \xi^\mu\xi^\nu\xi^\sigma}{(\|\xi\|^2-\lambda)^{5}}(\|\xi\|^2)_{x^\mu x^\nu x^\sigma}\,\delta_{\alpha}{}^\beta
+O(|x||\xi|^{-5})\,,
\end{multline}
\begin{multline}
\label{16 November 2023 equation 14}
[q_{-s-4}]_\alpha{}^\beta
=
\frac{(s+1)(s+3)}{8|\xi|^{s+5}} \left(-2i\xi^\mu ([h_0]_\alpha{}^\beta)_{x^\mu} -g^{\mu\nu} ([h_1]_\alpha{}^\beta)_{x^\mu x^\nu}\right)
\\
+\frac{(s+1)(s+3)(s+5)}{48|\xi|^{s+7}}\left(4\xi^\mu\xi^\nu([h_1]_\alpha{}^\beta)_{x^\mu x^\nu}-\frac{8i}{3}(g^{\mu\nu}\xi^\sigma+g^{\mu\sigma}\xi^\nu+g^{\nu\sigma}\xi^\mu)(\|\xi\|^2)_{x^\mu x^\nu x^\sigma}\,\delta_{\alpha}{}^\beta\right)\,
\\
+\frac{i(s+1)(s+3)(s+5)(s+7)\xi^\mu\xi^\nu\xi^\sigma}{24|\xi|^{s+9}}(\|\xi\|^2)_{x^\mu x^\nu x^\sigma}\,\delta_{\alpha}{}^\beta
+O(|x||\xi|^{-s-4})\,.
\end{multline}
To simplify \eqref{16 November 2023 equation 8}--\eqref{16 November 2023 equation 14} we used the facts that
\begin{equation*}
\label{estimates symbol Hodge Laplacian}
h_{1}=O(|x|^2|\xi|),\qquad h_{0}=O(|x|)\,,
\end{equation*}
which follow from \eqref{16 November 2023 equation 1} and Theorem~\ref{theorem symbol of the Hodge Laplacian}. Observe also that
\begin{equation*}
\label{estimates components of r}
r_{-3}=O(|x|^2|\xi|^{-3}), \qquad r_{-4}=O(|x||\xi|^{-4})\,.
\end{equation*}

\

We now just need to put the various pieces together. Indeed, \eqref{symbol curl Delta -s-1/2} tells us that
\begin{equation}
\label{16 November 2023 equation 15}
[t_{-s-3}]_\alpha{}^\alpha(0,\xi) = \varepsilon_\alpha{}^{\mu\gamma}\left(i \xi_\mu\, [q_{-s-4}]_{\gamma}{}^\alpha(0,\xi) + \frac{\partial [q_{-s-3}]_{\gamma}{}^\alpha}{\partial x^\mu}(0,\xi) \right)\,.
\end{equation}

\

Differentiating \eqref{16 November 2023 equation 9} with respect to $x$ and relabelling the indices we get
\begin{multline}
\label{16 November 2023 equation 11}
([q_{-s-3}]_\gamma{}^\alpha)_{x^\mu}(x,\xi)
=
-\underset{(*)}{\underbrace{\frac{s+1}{2|\xi|^{s+3}}([h_0]_\gamma{}^\alpha)_{x^\mu}}}
\\
-\frac{(s+1)(s+3)}{8|\xi|^{s+5}}
\left[ 
2i \xi^\rho ([h_1]_\gamma{}^\alpha)_{x^\rho x^\mu} +\underset{(*)}{\underbrace{g^{\rho\sigma}(\|\xi\|^2)_{x^\rho x^\sigma x^\mu}\delta_\gamma{}^\alpha}}
\right]
\\
+\underset{(*)}{\underbrace{\frac{(s+1)(s+3)(s+5) \xi^\rho\xi^\sigma}{12|\xi|^{s+7}}(\|\xi\|^2)_{x^\rho x^\sigma x^\mu}\delta_\gamma{}^\alpha}}
+ O(|x||\xi|^{-s-3})\,.
\end{multline}
For convenience, let us also relabel the indices in \eqref{16 November 2023 equation 14} to obtain
\begin{multline}
\label{23 November 2023 equation 8}
[q_{-s-4}]_\gamma{}^\alpha
=
\frac{(s+1)(s+3)}{8|\xi|^{s+5}} \left(-2i\xi^\rho ([h_0]_\gamma{}^\alpha)_{x^\rho} -g^{\rho\sigma} ([h_1]_\gamma{}^\alpha)_{x^\rho x^\sigma}\right)
\\
+\frac{(s+1)(s+3)(s+5)}{48|\xi|^{s+7}}\left(4\xi^\rho\xi^\sigma([h_1]_\gamma{}^\alpha)_{x^\rho x^\sigma}-\underset{(*)}{\underbrace{\frac{8i}{3}(g^{\mu\nu}\xi^\sigma+g^{\mu\sigma}\xi^\nu+g^{\nu\sigma}\xi^\mu)(\|\xi\|^2)_{x^\mu x^\nu x^\sigma}\,\delta_{\gamma}{}^\alpha}}\right)\,
\\
+\underset{(*)}{\underbrace{\frac{i(s+1)(s+3)(s+5)(s+7)\xi^\mu\xi^\nu\xi^\sigma}{24|\xi|^{s+9}}(\|\xi\|^2)_{x^\mu x^\nu x^\sigma}\,\delta_{\gamma}{}^\alpha}}
+O(|x||\xi|^{-s-4})\,.
\end{multline}

It is not hard to see that 
the terms marked by (*) in \eqref{16 November 2023 equation 11} and \eqref{23 November 2023 equation 8} vanish when substituted into \eqref{16 November 2023 equation 15}.
This happens because in each case we are looking at a contraction
of the totally antisymmetric symbol $\varepsilon$
with a rank 3 tensor symmetric in a pair of indices.

\

Let us now substitute \eqref{16 November 2023 equation 11} and \eqref{23 November 2023 equation 8} into \eqref{16 November 2023 equation 15}, drop the terms marked by $(*)$, and examine the
contribution from the remaining term proportional to
\begin{equation}
\label{dima proportionality coefficient 1}
\frac{(s+1)(s+3)(s+5)}{|\xi|^{s+7}}\,.
\end{equation}
Dropping the factor \eqref{dima proportionality coefficient 1},
we have
\begin{multline}
\label{23 November 2023 equation 9}
\left.\frac{i}{12}\,\varepsilon_\alpha{}^{\mu\gamma} \xi_\mu\xi^\rho\xi^\sigma([h_1]_\gamma{}^\alpha)_{x^\rho x^\sigma}\right|_{x=0}
\\
=
\frac{1}{72}\varepsilon_\alpha{}^{\mu\gamma} \xi_\mu \xi_\tau \xi^\rho\xi^\sigma
\left[ 
\nabla_\gamma\operatorname{Riem}^\tau{}_\rho{}^\alpha{}_\sigma(0)
-3
\nabla_\rho\operatorname{Riem}^\tau{}_\gamma{}^\alpha{}_\sigma(0)
+5
\nabla_\sigma\operatorname{Riem}^\tau{}_\rho{}^\alpha{}_\gamma(0)
\right.
\\
\left.
+
\nabla_\gamma\operatorname{Riem}^\tau{}_\sigma{}^\alpha{}_\rho(0)
-3
\nabla_\sigma\operatorname{Riem}^\tau{}_\gamma{}^\alpha{}_\rho(0)
+5
\nabla_\rho\operatorname{Riem}^\tau{}_\sigma{}^\alpha{}_\gamma(0)
\right]
\\
=
\frac{1}{72} \varepsilon_\alpha{}^{\mu\gamma} \xi_\mu \xi_\tau \xi^\rho\xi^\sigma
\left[ 
-3
\nabla_\rho\operatorname{Riem}^\tau{}_\gamma{}^\alpha{}_\sigma(0)
-3
\nabla_\sigma\operatorname{Riem}^\tau{}_\gamma{}^\alpha{}_\rho(0)
\right]
\\
=
-\frac{1}{12}
\varepsilon_\alpha{}^{\mu\gamma} \xi_\mu \xi_\tau \xi^\rho\xi^\sigma
\nabla_\rho\operatorname{Riem}^\tau{}_\gamma{}^\alpha{}_\sigma(0)
=
\frac{1}{12}
\varepsilon^{\mu\alpha\gamma}
\xi_\mu\xi^\rho
\xi^\tau\xi^\sigma
\nabla_\rho\operatorname{Riem}_{\tau\gamma\alpha\sigma}(0)
\\
=
\frac{1}{24}
\varepsilon^{\mu\alpha\gamma}
\xi_\mu\xi^\rho
\xi^\tau\xi^\sigma
\left[
\nabla_\rho\operatorname{Riem}_{\tau\gamma\alpha\sigma}(0)
-
\nabla_\rho\operatorname{Riem}_{\tau\alpha\gamma\sigma}(0)
\right]
\\
=
\frac{1}{48}
\varepsilon^{\mu\alpha\gamma}
\xi_\mu\xi^\rho
\xi^\tau\xi^\sigma
\left[
\nabla_\rho\operatorname{Riem}_{\tau\gamma\alpha\sigma}(0)
-
\nabla_\rho\operatorname{Riem}_{\tau\alpha\gamma\sigma}(0)
+
\nabla_\rho\operatorname{Riem}_{\sigma\gamma\alpha\tau}(0)
-
\nabla_\rho\operatorname{Riem}_{\sigma\alpha\gamma\tau}(0)
\right]
\\
=0\,.
\end{multline}
Hence, the term from \eqref{23 November 2023 equation 8}
proportional to \eqref{dima proportionality coefficient 1}
does not contribute to \eqref{16 November 2023 equation 15}.

We claim that the only surviving term in \eqref{16 November 2023 equation 11} does not contribute to \eqref{16 November 2023 equation 15} either. Indeed, dropping the factor
$(s+1)(s+3)|\xi|^{-s-5}\,$,
we have
\begin{multline}
\label{23 November 2023 equation 11}
\left.-\frac{i}{4}\,\varepsilon_\alpha{}^{\mu\gamma}
\xi^\rho ([h_1]_\gamma{}^\alpha)_{x^\rho x^\mu}\right|_{x=0}
\\
=
-\frac1{24}
\varepsilon_\alpha{}^{\mu\gamma}\, \xi^\rho\xi_\tau\left[
\nabla_\gamma\operatorname{Riem}^\tau{}_\mu{}^\alpha{}_\rho(0)
-3
\nabla_\mu\operatorname{Riem}^\tau{}_\gamma{}^\alpha{}_\rho(0)
+5
\nabla_\rho\operatorname{Riem}^\tau{}_\mu{}^\alpha{}_\gamma(0)
\right.
\\
\left.
+\nabla_\gamma\operatorname{Riem}^\tau{}_\rho{}^\alpha{}_\mu(0)
-3
\nabla_\rho\operatorname{Riem}^\tau{}_\gamma{}^\alpha{}_\mu(0)
+5
\nabla_\mu\operatorname{Riem}^\tau{}_\rho{}^\alpha{}_\gamma(0)
 \right]
\\
=
-\frac1{24}
\varepsilon_\alpha{}^{\mu\gamma}\, \xi^\rho\xi_\tau\left[
\nabla_\gamma\operatorname{Riem}^\tau{}_\mu{}^\alpha{}_\rho(0)
-3
\nabla_\mu\operatorname{Riem}^\tau{}_\gamma{}^\alpha{}_\rho(0)
+5
\nabla_\rho\operatorname{Riem}^\tau{}_\mu{}^\alpha{}_\gamma(0)
-3
\nabla_\rho\operatorname{Riem}^\tau{}_\gamma{}^\alpha{}_\mu(0)
 \right]
\\
=
-\frac1{24}
\varepsilon_\alpha{}^{\mu\gamma}\, \xi^\rho\xi_\tau
\left[
-4
\nabla_\mu\operatorname{Riem}^\tau{}_\gamma{}^\alpha{}_\rho(0)
+8
\nabla_\rho\operatorname{Riem}^\tau{}_\mu{}^\alpha{}_\gamma(0)
\right]
\\
=
-\frac1{3}
\varepsilon_\alpha{}^{\mu\gamma}\, \xi^\rho\xi_\tau
\nabla_\rho\operatorname{Riem}^\tau{}_\mu{}^\alpha{}_\gamma(0)
=
\frac1{3}
\varepsilon^{\mu\alpha\gamma}\,\xi^\rho\xi^\tau
\nabla_\rho\operatorname{Riem}_{\tau\mu\alpha\gamma}(0)
\\
=
\frac1{3}
\xi^\rho\xi^\tau
\left.
\left[
\nabla_\rho
\left(
E^{\mu\alpha\gamma}
\operatorname{Riem}_{\tau\mu\alpha\gamma}
\right)
\right]
\right|_{x=0}
\,.
\end{multline}
But the first (algebraic) Bianchi identity tells us that
the 1-form $\,E^{\mu\alpha\gamma}
\operatorname{Riem}_{\tau\mu\alpha\gamma}\,$ is identically zero, hence,
the quantity \eqref{23 November 2023 equation 11} vanishes.
Alternatively, one can avoid the use of the Bianchi identity 
by expressing the Riemann tensor in terms of the Ricci tensor
according to \cite[formula (6.24)]{curl}.

We have established that the only term contributing to
\eqref{16 November 2023 equation 15}
is the first term from the right-hand side of 
\eqref{23 November 2023 equation 8}.
Thus, working in geodesic normal coordinates
and assuming that curvature vanishes at the origin,
we have arrived at the formula
\begin{multline}
\label{23 November 2023 equation 10}
(A^{(s)})_\mathrm{prin}(0,\xi)
=
[t_{-s-3}]_\alpha{}^\alpha(0,\xi)
\\
=
\frac{(s+1)(s+3)}{8|\xi|^{s+5}}
\varepsilon_\alpha{}^{\mu\gamma}
\left.\left[
2 \xi_\mu\xi^\rho ([h_0]_\gamma{}^\alpha)_{x^\rho} 
-
i \xi_\mu g^{\rho\sigma} ([h_1]_\gamma{}^\alpha)_{x^\rho x^\sigma}
\right]\right|_{x=0}.
\end{multline}
Here
$[h_0]_\gamma{}^\alpha$ is defined in accordance with
\eqref{10 August 2023 equation 17},
\eqref{theorem symbol of the Hodge Laplacian equation 3}
and
\eqref{10 August 2023 equation 19},
whereas
$[h_1]_\gamma{}^\alpha$ is defined in accordance with
\eqref{10 August 2023 equation 16},
\eqref{theorem symbol of the Hodge Laplacian equation 3bis}
and
\eqref{10 August 2023 equation 18}.

Straightforward calculations give us
\begin{equation}
\label{23 November 2023 equation 12}
\left.-i\,\varepsilon_\alpha{}^{\mu\gamma}  \xi_\mu \,g^{\rho\sigma} ([h_1]_\gamma{}^\alpha)_{x^\rho x^\sigma}\right|_{x=0}
=
-\frac83
 \varepsilon^{\alpha\beta\gamma}
\nabla_\alpha\operatorname{Ric}_\beta{}^\rho(0)\,
 \xi_\gamma\xi_\rho
\end{equation}
and
\begin{equation}
\label{24 November 2023 equation 1}
\left.2 \varepsilon_\alpha{}^{\mu\gamma}
 \xi_\mu\xi^\rho ([h_0]_\gamma{}^\alpha)_{x^\rho} \right|_{x=0}
=
\frac43
 \varepsilon^{\alpha\beta\gamma}
\nabla_\alpha\operatorname{Ric}_\beta{}^\rho(0)\,
 \xi_\gamma\xi_\rho\,.
\end{equation}
Substituting \eqref{23 November 2023 equation 12} and \eqref{24 November 2023 equation 1} into \eqref{23 November 2023 equation 10} we arrive at \eqref{proof As prin equation 1}. This concludes the proof of Theorem~\ref{Theorem order of A(s)}, part (b).
\end{proof}

\section{From $A^{(s)}$ to $\eta_{\operatorname{curl}}(s)$}
\label{From As to eta}

Let us make the relation between $A^{(s)}$ and $\eta_{\operatorname{curl}}(s)$ more explicit.

\

Resorting to the spectral representation \eqref{spectral decomposition Hodge Laplacian} for the Hodge Laplacian we get
\begin{equation}
\label{From As to eta equation 1}
\curl\, (-\boldsymbol{\Delta})^{-\frac{s+1}2}
=
\sum_{j\in\mathbb{Z}\setminus\{0\}}\lambda^{-s}\,u_j\langle u_j,\,\cdot\,\rangle\,.
\end{equation}
But then, for $s>3$, \eqref{From As to eta equation 1} and \eqref{alternative representation for As} immediately imply that
\begin{equation}
\label{From As to eta equation 2}
\mathfrak{a}^{(s)}(x,x)=\eta_{\curl}^\mathrm{loc}(x;s)\,, \qquad \int_M \mathfrak{a}^{(s)}(x,x) \dr x=\eta_{\curl}(s)\,.
\end{equation}
The condition $s>3$ ensures that the series expansions over the eigensystem
of $\curl$ converge absolutely.

Formula \eqref{From As to eta equation 2} tells us that a good understanding of the behaviour of $\mathfrak{a}^{(s)}(x,y)$ as a function of~$s$ would enable us to prove our main result.
In particular, in order to go to the limit as $s\to0^+$ we need an explicit description of the leading singularity of the parameter-dependent asymmetry operator $A^{(s)}$.

\section{The leading singularity of the integral kernel of $A^{(s)}$}
\label{The leading singularity of the integral kernel of As}
%

In this section we define a reference integral operator which captures the leading singularity of the parameter-dependent asymmetry operator $A^{(s)}$.

Consider the exponential map $\exp_x:T_xM\to M$.
In a neigbourhood of $x$ the exponential map has an inverse
$\exp_x^{-1}:M\to T_xM$.
Of course, the inverse exponential map can be expressed in terms of the distance function as
\begin{equation}
\label{The reference operator equation 1}
[\exp_x^{-1}(y)]^\alpha=-\frac{1}{2}\,g^{\alpha\beta}(x)\,
[\operatorname{dist}^2(x,y)]_{x^\beta}\,.
\end{equation}

Put
\begin{equation}
\label{The reference operator equation 2}
\mathfrak{ref}^{(s)}(x,y)
:=
[\operatorname{dist}(x,y)]^{s-2}\,
E^{\alpha\beta}{}_\gamma(x)\,
\nabla_\alpha\operatorname{Ric}_{\beta\sigma}(x)\,
[\exp^{-1}_x(y)]^\gamma\,[\exp^{-1}_x(y)]^\sigma
\,\chi(\operatorname{dist}(x,y)/\epsilon)
\end{equation}
for $x\ne y$ and $\mathfrak{ref}^{(s)}(x,y):=0$ for $x=y$.
Here $s\in(-3,+\infty)$ is a parameter,
$\chi:[0,+\infty)\to \mathbb{R}$ is a compactly supported infinitely smooth scalar function such that $\chi=1$ in a neighbourhood of zero
and $\epsilon$ is a small positive number which ensures that
$\mathfrak{ref}^{(s)}(x,y)$
vanishes when $x$ and $y$ are not sufficiently close.

\begin{definition}
\label{definition of the reference operator}
We call the scalar integral operator
\begin{equation}
\label{The reference operator equation 3}
\operatorname{Ref}^{(s)}
:=\int_M\mathfrak{ref}^{(s)}(x,y)\,(\,\cdot\,)\,\rho(y)\,\dr y
\end{equation}
the \emph{reference operator}.
\end{definition}

\begin{theorem}
\label{principal symbol of the reference operator}
The reference operator is a pseudodifferential operator of order $-s-3$ with principal symbol
\begin{equation}
\label{The reference operator equation 4}
(\operatorname{Ref}^{(s)})_\prin(x,\xi)
=
\,c(s)\,
\|\xi\|^{-s-5}\,
E^{\alpha\beta\gamma}(x)\,
\nabla_\alpha \operatorname{Ric}_{\beta}{}^\sigma(x)\, \xi_\gamma\xi_\sigma\,,
\end{equation}
where
\begin{equation}
\label{The reference operator equation 5}
c(s)
:=
\begin{cases}
-\frac{4\pi\,\Gamma(s+4)}{s(s+2)}\,\sin\frac{\pi s}{2}
\quad&\text{if}\quad s\ne0,-2,
\\
-6\pi^2
\quad&\text{if}\quad s=0,
\\
-\pi^2
\quad&\text{if}\quad s=-2.
\end{cases}
\end{equation}
\end{theorem}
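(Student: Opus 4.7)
The strategy is to extract the principal symbol of $\operatorname{Ref}^{(s)}$ by computing the Fourier transform of its Schwartz kernel in geodesic normal coordinates centred at an arbitrary base point, which without loss of generality we take to be $x=0$. In such coordinates $\exp_0^{-1}(y)=y$ and $\dist(0,y)=|y|$, so that \eqref{The reference operator equation 2} reduces to
\[
\mathfrak{ref}^{(s)}(0,y)=|y|^{s-2}\,K_{\gamma\sigma}(0)\,y^\gamma y^\sigma\,\chi(|y|/\epsilon)\,,
\qquad
K_{\gamma\sigma}(x):=E^{\alpha\beta}{}_\gamma(x)\,\nabla_\alpha\Ric_{\beta\sigma}(x)\,.
\]
By the standard Fourier-inversion relation between left symbol and Schwartz kernel, the full symbol of $\operatorname{Ref}^{(s)}$ at $x=0$ is given, modulo smoothing, by $\int e^{iy\cdot\xi}\,\mathfrak{ref}^{(s)}(0,y)\,\rho(y)\,\dr y$. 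Since $\chi\equiv 1$ near the origin and $\rho(y)=1+O(|y|^2)$ in normal coordinates, the corresponding corrections contribute only homogeneous components of the symbol of order strictly less than $-s-3$; the principal symbol is thus controlled by the Fourier transform of the homogeneous distribution $|y|^{s-2}\,y^\gamma y^\sigma$.

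I compute this Fourier transform by writing $y^\gamma y^\sigma e^{iy\cdot\xi}=-\partial^2_{\xi_\gamma\xi_\sigma}e^{iy\cdot\xi}$ and invoking the classical Riesz kernel formula
\[
\int e^{iy\cdot\xi}\,|y|^{s-2}\,\dr y
=c_0(s)\,|\xi|^{-s-1}\,,
\qquad
c_0(s):=\frac{2^{s+1}\pi^{3/2}\,\Gamma\!\left(\tfrac{s+1}{2}\right)}{\Gamma\!\left(1-\tfrac{s}{2}\right)}\,,
\]
understood for $s\in(-3,+\infty)$ via analytic continuation in $s$. Two differentiations in $\xi$ produce a linear combination of a term proportional to $\delta^{\gamma\sigma}|\xi|^{-s-3}$ and a term proportional to $\xi^\gamma\xi^\sigma|\xi|^{-s-5}$. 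The first vanishes upon contraction with $K_{\gamma\sigma}$, because the antisymmetry of $E$ in $\beta,\sigma$ combined with the symmetry of $\Ric$ yields $\,E^{\alpha\beta\sigma}\nabla_\alpha\Ric_{\beta\sigma}\equiv 0\,$; only the second term survives, and immediately produces the tensorial structure $\|\xi\|^{-s-5}\,E^{\alpha\beta\gamma}\nabla_\alpha\Ric_{\beta}{}^\sigma\,\xi_\gamma\xi_\sigma$ prescribed by \eqref{The reference operator equation 4}, with overall prefactor $-(s+1)(s+3)\,c_0(s)$. Since the kernel is smooth off the diagonal (thanks to $\chi$) and polyhomogeneous in $y-x$ at the diagonal, this simultaneously confirms that $\operatorname{Ref}^{(s)}\in\Psi^{-s-3}$.

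It remains to verify that $-(s+1)(s+3)\,c_0(s)$ equals $c(s)$ as given in \eqref{The reference operator equation 5}. Combining the Legendre duplication identity $\Gamma\!\left(\tfrac{s+1}{2}\right)\Gamma\!\left(\tfrac{s+2}{2}\right)=2^{-s}\sqrt{\pi}\,\Gamma(s+1)$ with Euler's reflection $\Gamma(s/2)\Gamma(1-s/2)=\pi/\sin(\pi s/2)$ one obtains
\[
c_0(s)=\frac{4\pi\,\Gamma(s+1)}{s}\,\sin\tfrac{\pi s}{2}\,,
\]
so that $-(s+1)(s+3)\,c_0(s)=-\frac{4\pi\,\Gamma(s+4)}{s(s+2)}\sin\frac{\pi s}{2}$, matching \eqref{The reference operator equation 5} for $s\ne 0,-2$. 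The apparent singularities at $s=0$ and $s=-2$ are removable, since the simple zeros of $\sin(\pi s/2)$ at those points cancel the simple poles of $1/[s(s+2)]$; a direct Taylor expansion yields the advertised values $-6\pi^2$ and $-\pi^2$. I expect the main technical obstacle to lie not in the Fourier computation, which is essentially algebraic, but in the careful bookkeeping required to ensure that the subleading factors --- the Taylor corrections to $\rho(y)$ and to $K_{\gamma\sigma}(y)$, and the effect of passing back from normal coordinates to generic coordinates --- only contribute to homogeneous components of the symbol of order strictly less than $-s-3$, so that the leading Riesz-type computation indeed yields the principal symbol.
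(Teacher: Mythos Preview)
Your argument is correct and yields the principal symbol by a route that is genuinely different from the paper's. The paper does not invoke the Riesz kernel formula for $|y|^{s-2}$ followed by $\xi$-differentiation; instead it passes at once to the trace-free combination $\bigl(y^\gamma y^\sigma-\tfrac13\delta^{\gamma\sigma}|y|^2\bigr)|y|^{s-2}\,\chi(|y|)$ and computes its Fourier transform \emph{with the cutoff in place} by explicit integration (Proposition~\ref{reference function lemma} and Lemmata~\ref{lemma dominican 1}--\ref{lemma dominican 2}), obtaining simultaneously the leading term $c(s)\,|\xi|^{-s-5}(\xi_\gamma\xi_\sigma-\tfrac13\delta_{\gamma\sigma}|\xi|^2)$ and an $O(|\xi|^{-\infty})$ remainder that is uniform in $s$ on compact subintervals of $(-3,+\infty)$. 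It then carries out the change from normal to arbitrary coordinates in detail. Your approach is shorter and more transparent for the principal symbol alone; the paper's approach buys the uniform-in-$s$ control of the remainder, which is precisely what is needed later, in Section~\ref{Proof of Theorem main result}, to take $s\to 0^+$ in the modified operator $\tilde A^{(s)}$. Indeed, Remark~\ref{reference function lemma remark} explicitly warns that a closely related shortcut (starting from $|y|^{2+s}\chi(|y|)$ and differentiating in~$y$) destroys this uniformity near $s=0$; your variant, differentiating in~$\xi$ starting from $|y|^{s-2}$, avoids that particular pitfall but still passes through the intermediate quantity $c_0(s)$, which has a simple pole at $s=-1$ (from $\Gamma(s+1)$). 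The pole is cancelled by the factor $(s+1)$ in your final expression, so the principal symbol formula is recovered for all $s\in(-3,+\infty)$ by continuity --- but it would be worth making this point explicit, and your route does not, by itself, deliver the uniform remainder estimate that the paper needs downstream.
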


\begin{proof}
Let us fix a point $x\in M$.
The scalar function
\eqref{The reference operator equation 2}
defines a distribution
\begin{equation}
\label{distribution}
\mathfrak{ref}^{(s)}(x,y):
f(y)
\mapsto
\int
\mathfrak{ref}^{(s)}(x,y)\,f(y)\,\rho(y)\,\dr y
\end{equation}
in the variable $y$.
This distribution depends on the parameter $x$.

In what follows we use the same notation $\,\mathfrak{ref}^{(s)}(x,y)\,$
for the scalar function and the distribution.
The meaning will be clear from the context.

Let us choose geodesic normal coordinates $\tilde y$ centred at $x$.
Then for $x=0$ and $\tilde y\ne 0$
the scalar function \eqref{The reference operator equation 2} reads
\begin{equation}
\label{proof principal symbol of the reference operator equation 1}
\mathfrak{ref}^{(s)}(0,\tilde y)=
\varepsilon^{\alpha\beta}{}_\gamma\,
\nabla_\alpha\operatorname{Ric}_{\beta\sigma}(0)\,
\frac{\tilde y^\gamma\tilde y^\sigma}{|\tilde y|^{-s+2}}
\,\chi(|\tilde y|/\epsilon)\,.
\end{equation}
We observe that replacing
\begin{equation}
\label{proof principal symbol of the reference operator equation 1a}
\frac{\tilde y^\gamma\tilde y^\sigma}{|\tilde y|^{-s+2}} \mapsto \frac{\tilde y^\gamma\tilde y^\sigma-\frac13 \delta^{\gamma \sigma}|\tilde y|^2}{|\tilde y|^{-s+2}}
\end{equation}
does not affect the RHS of \eqref{proof principal symbol of the reference operator equation 1}, because $\varepsilon^{\alpha\beta\gamma}\,
\nabla_\alpha\operatorname{Ric}_{\beta\gamma}(0)=0$ due to the symmetries of the geometric quantities involved.

Formulae 
\eqref{proof principal symbol of the reference operator equation 1},
\eqref{proof principal symbol of the reference operator equation 1a}
and
Proposition~\ref{reference function lemma} tell us that for fixed $x\in M$ and in chosen geodesic normal coordinates $\tilde y$ centred at $x$ the distribution
\eqref{distribution}
can be written, modulo $C^\infty$, as
\begin{multline}
\label{proof principal symbol of the reference operator equation 2}
\mathfrak{ref}^{(s)}(0,\tilde y):
f(\tilde y)
\mapsto
(2\pi)^{-3}\,c(s)
\\
\int
e^{-i\tilde y^\tau\tilde\xi_\tau}\,
|\tilde\xi|^{-s-5}\,
\varepsilon^{\alpha\beta\gamma}\,
\nabla_\alpha\operatorname{Ric}_{\beta}{}^\sigma(0)\,\tilde\xi_\gamma\tilde\xi_\sigma\,
(1-\chi(|\tilde\xi|))\,
f(\tilde y)\,\rho(\tilde y)\,\dr\tilde y\,\dr\tilde\xi\,,
\end{multline}
where
\begin{equation}
\label{Riemannian density in normal coordinates}
\rho(\tilde y)=1+O(|\tilde y|^2)
\end{equation}
is the Riemannian density in geodesic normal coordinates.

Let us now switch to arbitrary local coordinates $y\,$.
Then our geodesic normal coordinates $\tilde y$ are expressed via $y$ as
$\tilde y=\tilde y(x,y)\,$, $\tilde y(x,x)=0\,$.
Recall that the choice of geodesic normal coordinates is unique
up to a gauge transformation --- an $x$-dependent 3-dimensional Euclidean rotation.
This gauge can be chosen so that the map $\,x,y\mapsto\tilde y\,$ is smooth.

Let us define the $x$-dependent invertible linear map $\xi\mapsto\tilde\xi$ by imposing the condition
\begin{equation}
\label{proof principal symbol of the reference operator equation 3}
\tilde y^\tau\tilde\xi_\tau
=
(y-x)^\tau\xi_\tau
+
O(|y-x|^2\,\|\xi\|)\,.
\end{equation}
The above condition defines the $x$-dependent invertible linear map $\xi\mapsto\tilde\xi$ uniquely.
Note also that
\begin{equation}
\label{norms of xi and tilde xi are the same}
|\tilde\xi|=\|\xi\|
\end{equation}
because $\|\xi\|$ incorporates the metric tensor at the point $x$.

Put
\begin{equation}
\label{densities 1}
\mu(x,y):=\det(\partial\tilde y^\kappa/\partial y^\lambda)\,,
\qquad
\nu(x):=\det(\partial\tilde\xi_\kappa/\partial\xi_\lambda)\,,
\end{equation}
so that
\begin{equation}
\label{densities 2}
\dr\tilde y=\mu(x,y)\,\dr y\,,
\qquad
\dr\tilde\xi=\nu(x)\,\dr\xi\,.
\end{equation}
Note that formulae
\eqref{proof principal symbol of the reference operator equation 3}
and
\eqref{densities 1}
imply
\begin{equation}
\label{densities 3}
\mu(x,y)\,\nu(x)
=
1
+
O(|y-x|)\,.
\end{equation}

In view of
\eqref{densities 2},
\eqref{proof principal symbol of the reference operator equation 3}
and
\eqref{norms of xi and tilde xi are the same},
the distribution \eqref{proof principal symbol of the reference operator equation 2}
can now be written, modulo $C^\infty$, as
\begin{multline}
\label{proof principal symbol of the reference operator equation 4}
\mathfrak{ref}^{(s)}(x,y):
f(y)
\mapsto
(2\pi)^{-3}\,c(s)
\\
\int
e^{-i\tilde y^\tau\tilde\xi_\tau}\,
\|\xi\|^{-s-5}\,
E^{\alpha\beta\gamma}(x)\,
\nabla_\alpha\operatorname{Ric}_{\beta}{}^\sigma(x)\,\xi_\gamma\xi_\sigma\,
(1-\chi(\|\xi\|))\,
f(y)\,\rho(\tilde y)\,\mu(x,y)\,\nu(x)\,\dr y\,\dr\xi\,,
\end{multline}
where $\tilde y$ is a given function of $x$ and $y$
and $\tilde\xi$ is a given function of $x$ and $\xi$,
the latter being linear in $\xi$.
Combining formulae
\eqref{The reference operator equation 3},
\eqref{proof principal symbol of the reference operator equation 4},
\eqref{Riemannian density in normal coordinates},
\eqref{proof principal symbol of the reference operator equation 3}
and
\eqref{densities 3},
and applying standard microlocal arguments
\cite[\S3~and~\S4]{shubin},
we conclude that our reference operator is a pseudodifferential operator
and that its principal symbol is given by formula
\eqref{The reference operator equation 4}.
\end{proof}

Theorem~\ref{principal symbol of the reference operator}
warrants a number of remarks.

\begin{itemize}
\item
The structure of formulae
\eqref{The reference operator equation 2}
and
\eqref{The reference operator equation 4}
is the same.
It is just a matter of replacing
the inverse exponential map
$\exp_y^{-1}(x)$ with momentum $\xi$,
replacing a power of the distance function with an appropriate power of momentum
and introducing a scaling factor $c(s)$.

\item
We have
\begin{equation*}
\label{The reference operator equation 6}
c(0)=\lim_{s\to0}c(s)\,,
\qquad
c(-2)=\lim_{s\to-2}c(s)\,.
\end{equation*}
Furthermore, the function $c:(-3,+\infty)\to\mathbb{R}$ is infinitely smooth.
\item
We have
\begin{equation*}
\label{The reference operator equation 7}
c(2k)=0\,,\qquad k=1,2,\dots,
\end{equation*}
which is not surprising because for positive even values of $s$ the integral kernel
\eqref{The reference operator equation 2} is infinitely smooth.
\item
Examination of formulae
\eqref{Theorem order of A(s) equation 1}
and
\eqref{The reference operator equation 4}
shows that the principal symbols of the operators $A^{(s)}$ and $\operatorname{Ref}^{(s)}$
differ by a scaling factor.
As $c(s)\ne0$ for $s\in(-3,2)$, we can use the reference operator $\operatorname{Ref}^{(s)}$
for describing the leading singularity of the parameter-dependent asymmetry operator $A^{(s)}$
in the range $s\in(-3,2)$.
\end{itemize}

The reference operator possesses the following important property which plays a crucial role in our analysis.

\begin{proposition}
\label{average of integral of the reference operator over a geodesic sphere}
Let $a,b\in\mathbb{R}$ be such that $-2<a<b$. Then we have
\begin{equation}
\label{The reference operator equation 8}
\lim_{r\to0^+}
\frac{1}{4\pi r^2}
\int_{\mathbb{S}_r(x)}
\mathfrak{ref}^{(s)}(x,y)\,\dr S_y
=0\,,
\end{equation}
where the limit is uniform over $s\in[a,b]$ and $x\in M$.
\end{proposition}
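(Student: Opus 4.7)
The plan is to evaluate the spherical average by passing to geodesic normal coordinates centred at $x$, carrying out the angular integration explicitly, and exploiting the algebraic symmetries of $\nabla\Ric$ against $E$ to see that the leading term in $r$ vanishes. The remainder is then $O(r^{s+2})$, which tends to zero uniformly in the stated range.

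More precisely, fix $x\in M$ and choose geodesic normal coordinates $\tilde y$ centred at $x$. In these coordinates one has $[\exp_x^{-1}(y)]^\gamma=\tilde y^\gamma$ and $\dist(x,y)=|\tilde y|$. For $r$ less than the injectivity radius the geodesic sphere $\mathbb{S}_r(x)$ is given by $|\tilde y|=r$, and the induced surface measure admits the expansion
\begin{equation*}
\dr S_y=r^2(1+O(r^2))\,\dr\omega,
\end{equation*}
where $\dr\omega$ is the standard measure on the Euclidean unit sphere $\mathbb{S}^2\subset\mathbb{R}^3$, with the $O(r^2)$ correction coming from the expansion \eqref{7 November 2023 equation 15} of the metric; the constants hidden in this remainder are controlled uniformly in $x\in M$ by the geometry of the compact manifold.

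Substituting $\tilde y=r\omega$ into \eqref{The reference operator equation 2} and using $\chi(r/\epsilon)=1$ for small $r$, I obtain
\begin{equation*}
\int_{\mathbb{S}_r(x)}\mathfrak{ref}^{(s)}(x,y)\,\dr S_y
=
r^{s+2}\,\varepsilon^{\alpha\beta}{}_\gamma\,\nabla_\alpha\Ric_{\beta\sigma}(x)
\int_{\mathbb{S}^2}\omega^\gamma\omega^\sigma\,\dr\omega
\,+\,r^{s+2}\,R(r,x),
\end{equation*}
where $R(r,x)=O(r^2)$ uniformly in $x$. The standard identity $\int_{\mathbb{S}^2}\omega^\gamma\omega^\sigma\,\dr\omega=\tfrac{4\pi}{3}\delta^{\gamma\sigma}$ reduces the leading coefficient to $\tfrac{4\pi}{3}\varepsilon^{\alpha\beta\sigma}\,\nabla_\alpha\Ric_{\beta\sigma}(x)$, which vanishes identically because $\nabla_\alpha\Ric_{\beta\sigma}$ is symmetric in $\beta,\sigma$ while $\varepsilon^{\alpha\beta\sigma}$ is antisymmetric in them (in normal coordinates at $x$ the metric reduces to $\delta$, so raising indices is harmless here). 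Consequently
\begin{equation*}
\frac{1}{4\pi r^2}\int_{\mathbb{S}_r(x)}\mathfrak{ref}^{(s)}(x,y)\,\dr S_y
=
O(r^{s+2})\,,
\end{equation*}
which tends to zero as $r\to0^+$ provided $s>-2$.

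For the uniformity in $s\in[a,b]$ and $x\in M$, the key points are that $M$ is compact so $\nabla\Ric$ and the $O(r^2)$ remainders in the metric and surface-measure expansions are bounded uniformly in $x$, and that the exponent $s+2$ appearing in $r^{s+2}$ is bounded below by $a+2>0$, so the estimate $r^{s+2}\le r^{a+2}$ holds uniformly for $s\in[a,b]$ and $r\le 1$. The only genuine subtlety, and thus the main thing to get right, is the explicit verification that the $r$-independent coefficient (the leading angular integral) really is identically zero as a function of $x\in M$; once this algebraic cancellation is observed, the rest of the proof is a routine estimate since no negative power of $r$ survives.
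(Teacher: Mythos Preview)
Your proof is correct and follows essentially the same approach as the paper: pass to geodesic normal coordinates, use that the surface element differs from the Euclidean one by a factor $1+O(r^2)$, and observe that the leading angular integral vanishes because $\int_{\mathbb{S}^2}\omega^\gamma\omega^\sigma\,\dr\omega\propto\delta^{\gamma\sigma}$ while $\varepsilon^{\alpha\beta\sigma}\nabla_\alpha\Ric_{\beta\sigma}=0$ by symmetry. Your treatment of the uniformity in $s$ and $x$ is actually more explicit than the paper's, which simply notes that the $O(r^2)$ correction to the surface measure is uniform in $x$ and leaves the rest implicit.
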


Recall that $\mathbb{S}_r(x)=\{y\in M|\dist(x,y)=r\}$ is the sphere of radius $r$ centred at $x$
and $\dr S_y$ is the surface area element on this sphere.

\begin{proof}[Proof of Proposition~\ref{average of integral of the reference operator over a geodesic sphere}]
Let us fix an arbitrary $x\in M$ and work in geodesic normal coordinates $y$ centred at $x$.
In our chosen coordinate system the geodesic sphere is the standard round 3-sphere,
whereas the surface area elements of the geodesic sphere and the standard round sphere differ
by a factor $1+O(r^2)$ with remainder uniform over $x\in M$. This reduces the proof of the
proposition to showing that
\begin{equation}
\label{The reference operator equation 9}
\int_{\mathbb{S}_r}
\varepsilon^{\alpha\beta}{}_\gamma\,
\nabla_\alpha\operatorname{Ric}_{\beta\sigma}(0)\,
y^\gamma\,y^\sigma\,\dr S
=0\,,
\end{equation}
where $y$ are Cartesian coordinates in $\mathbb{R}^3\,$,
$\,\mathbb{S}_r$ is the standard round sphere,
$\dr S$ is its surface area element
and $\varepsilon^{\alpha\beta}{}_\gamma=\varepsilon_{\alpha\beta\gamma}\,$.
But the fact that
$\nabla_\alpha\operatorname{Ric}_{\beta\sigma}=\nabla_\alpha\operatorname{Ric}_{\sigma\beta}$
and the elementary identity
\begin{equation*}
\label{The reference operator equation 10}
\int_{\mathbb{S}_r}
y^\gamma\,y^\sigma
\,\dr S
=
\frac{4\pi r^2}{3}\,\delta^{\gamma\sigma}
\end{equation*}
immediately imply formula \eqref{The reference operator equation 9}.
\end{proof}

\begin{corollary}
\label{average of integral of the reference operator over a geodesic sphere corollary}
Let $a,b\in\mathbb{R}$ be such that $-1<a<b$. Then we have
\begin{equation}
\label{The reference operator equation 8 corollary}
\lim_{r\to0^+}
\frac{1}{4\pi r^2}
\int_{\mathbb{S}_r(x)}
\mathfrak{ref}^{(s)}(y,x)\,\dr S_y
=0\,,
\end{equation}
where the limit is uniform over $s\in[a,b]$ and $x\in M$.
\end{corollary}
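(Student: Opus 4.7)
The strategy is to reduce the corollary to Proposition~\ref{average of integral of the reference operator over a geodesic sphere} via the decomposition
\begin{equation*}
\mathfrak{ref}^{(s)}(y,x)
=
\mathfrak{ref}^{(s)}(x,y)
+
\bigl[\mathfrak{ref}^{(s)}(y,x)-\mathfrak{ref}^{(s)}(x,y)\bigr]\,.
\end{equation*}
Since the hypothesis $-1<a<b$ is stronger than $-2<a<b$, the proposition immediately handles the $\mathfrak{ref}^{(s)}(x,y)$ summand, whose spherical average tends to zero as $r\to 0^+$ uniformly on $s\in[a,b]$ and $x\in M$. The task therefore reduces to establishing the same uniform decay for the bracket.

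The central step is the pointwise estimate
\begin{equation*}
\bigl|\mathfrak{ref}^{(s)}(y,x)-\mathfrak{ref}^{(s)}(x,y)\bigr|
\le
C\,\operatorname{dist}(x,y)^{s+1}\,,
\end{equation*}
valid for $\operatorname{dist}(x,y)$ sufficiently small, with $C$ independent of $s\in[a,b]$ and $x\in M$. To prove it, work in geodesic normal coordinates $y$ centred at $x$ and combine two observations. First, the Gauss lemma yields the \emph{exact} identities $[\exp_x^{-1}(y)]^\alpha=y^\alpha$ and $[\exp_y^{-1}(x)]^\alpha=-y^\alpha$, whence
\begin{equation*}
[\exp_x^{-1}(y)]^\gamma[\exp_x^{-1}(y)]^\sigma
=
[\exp_y^{-1}(x)]^\gamma[\exp_y^{-1}(x)]^\sigma
=
y^\gamma y^\sigma\,.
\end{equation*}
Second, the tensor field $E^{\alpha\beta}{}_\gamma\,\nabla_\alpha\operatorname{Ric}_{\beta\sigma}$ is smooth on the compact manifold $M$, so its components in the chosen coordinates satisfy
\begin{equation*}
E^{\alpha\beta}{}_\gamma(y)\,\nabla_\alpha\operatorname{Ric}_{\beta\sigma}(y)
-
E^{\alpha\beta}{}_\gamma(x)\,\nabla_\alpha\operatorname{Ric}_{\beta\sigma}(x)
=
O(|y|)
\end{equation*}
as $y\to 0$, uniformly over $x\in M$. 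Multiplying by the common factor $[\operatorname{dist}(x,y)]^{s-2}=|y|^{s-2}$ and by $y^\gamma y^\sigma=O(|y|^2)$ from the first observation (and noting that the cutoff $\chi$ cancels in the difference), one obtains a bound of order $|y|^{s-2}\cdot|y|\cdot|y|^2=|y|^{s+1}$, as required. Uniformity in $s$ is automatic since $s$ enters only through the power of $|y|$, and uniformity in $x$ follows from compactness of $M$.

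The conclusion is then immediate. Using the standard expansion $\operatorname{Area}(\mathbb{S}_r(x))=4\pi r^2\bigl(1+O(r^2)\bigr)$ for small geodesic spheres, the pointwise bound yields
\begin{equation*}
\left|
\frac{1}{4\pi r^2}\int_{\mathbb{S}_r(x)}
\bigl[\mathfrak{ref}^{(s)}(y,x)-\mathfrak{ref}^{(s)}(x,y)\bigr]\,\dr S_y
\right|
\le
Cr^{s+1}\bigl(1+O(r^2)\bigr)\,,
\end{equation*}
which vanishes as $r\to 0^+$ uniformly on $s\in[a,b]$ (thanks to $a>-1$) and on $x\in M$. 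Combined with Proposition~\ref{average of integral of the reference operator over a geodesic sphere}, this proves \eqref{The reference operator equation 8 corollary}. The only delicate---and rather mild---point is justifying the uniformity of the first-order Taylor remainder over both $s\in[a,b]$ and $x\in M$ simultaneously; this is routine since the remainder is controlled by the $C^1$ norm of $E^{\alpha\beta}{}_\gamma\nabla_\alpha\operatorname{Ric}_{\beta\sigma}$, finite on compact $M$, while the dependence on $s$ is isolated in the scalar factor $|y|^{s-2}$.
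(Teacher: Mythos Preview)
Your proof is correct and follows the same approach as the paper: split $\mathfrak{ref}^{(s)}(y,x)$ into $\mathfrak{ref}^{(s)}(x,y)$ plus the antisymmetric part, invoke Proposition~\ref{average of integral of the reference operator over a geodesic sphere} for the first term, and control the second by the estimate $\mathfrak{ref}^{(s)}(x,y)-\mathfrak{ref}^{(s)}(y,x)=O(\operatorname{dist}^{s+1}(x,y))$ uniformly in $s\in[a,b]$ and $x\in M$. The paper states this estimate in one line (``examination of formula~\eqref{The reference operator equation 2} shows that\ldots''), whereas you supply the underlying justification---in particular the exact identity $[\exp_y^{-1}(x)]^\alpha=-y^\alpha$ in normal coordinates centred at $x$, which is what makes the $\exp^{-1}$ factors coincide and isolates the difference in the curvature coefficient alone.
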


Note that in formulae
\eqref{The reference operator equation 8}
and
\eqref{The reference operator equation 8 corollary}
the arguments of $\mathfrak{ref}^{(s)}$ come in opposite order.
Note also that
condition $a>-1$ in
Corollary~\ref{average of integral of the reference operator over a geodesic sphere corollary}
is more restrictive than
condition $a>-2$ in
Proposition~\ref{average of integral of the reference operator over a geodesic sphere}.

\begin{proof}[Proof of Corollary~\ref{average of integral of the reference operator over a geodesic sphere corollary}]
Examination of formula \eqref{The reference operator equation 2} shows that
\begin{equation*}
\mathfrak{ref}^{(s)}(x,y)-\mathfrak{ref}^{(s)}(y,x)
=
O(\operatorname{dist}^{s+1}(x,y))
\end{equation*}
uniformly over $s\in[a,b]$ and $x,y\in M$, and the required result immediately follows.
\end{proof}

The issue with the reference operator defined in accordance with formulae
\eqref{The reference operator equation 1}--\eqref{The reference operator equation 3}
is that it is, generically, not self-adjoint because the variables $x$ and $y$ appear in a non-symmetric
fashion. Hence, in subsequent analysis it is natural to work with its symmetrised version
\begin{equation*}
\label{The reference operator equation 3 symmetrised}
\operatorname{Ref}^{(s)}_\mathrm{sym}
:=
\frac{1}{2}
[
\operatorname{Ref}^{(s)}
+
(
\operatorname{Ref}^{(s)}
)^*
]
=
\int_M
\mathfrak{ref}^{(s)}_\mathrm{sym}(x,y)
\,(\,\cdot\,)\,\rho(y)\,\dr y\,,
\end{equation*}
\begin{equation*}
\label{The reference operator equation 3 symmetrised extra}
\mathfrak{ref}^{(s)}_\mathrm{sym}(x,y)
:=
\frac{1}{2}
[
\mathfrak{ref}^{(s)}(x,y)
+
\mathfrak{ref}^{(s)}(y,x)
]
\,.
\end{equation*}
Of course, the symmetrised reference operator has the same princiapl symbol as the original one,
\begin{equation}
\label{The reference operator equation 4 symmetrised}
(\operatorname{Ref}^{(s)}_\mathrm{sym})_\prin(x,\xi)
=
\,c(s)\,
\|\xi\|^{-s-5}\,
E^{\alpha\beta\gamma}(x)\,
\nabla_\alpha \operatorname{Ric}_{\beta}{}^\sigma(x)\, \xi_\gamma\xi_\sigma\,.
\end{equation}
Furthermore,
Proposition~\ref{average of integral of the reference operator over a geodesic sphere}
and
Corollary~\ref{average of integral of the reference operator over a geodesic sphere corollary}
imply that, given $a,b\in\mathbb{R}$ such that $-1<a<b$, we have
\begin{equation}
\label{The reference operator equation 8 symmetrised}
\lim_{r\to0^+}
\frac{1}{4\pi r^2}
\int_{\mathbb{S}_r(x)}
\mathfrak{ref}^{(s)}_\mathrm{sym}(x,y)\,\dr S_y
=0\,,
\end{equation}
where the limit is uniform over $s\in[a,b]$ and $x\in M$.

\section{Proof of Theorem~\ref{Theorem main result}}
\label{Proof of Theorem main result}

Let $s\in(-3,2)$.
Put
\begin{equation}
\label{definition of modified A(s)}
\tilde A^{(s)}
:=
A^{(s)}
+
\frac{(s+1)(s+3)}{6\,c(s)}
\,
\operatorname{Ref}^{(s)}_\mathrm{sym}\,,
\end{equation}
where the parameter $c(s)$ is given by formula \eqref{The reference operator equation 5}.
The advantage of working with the operator $\tilde A^{(s)}$
rather than with the original parameter-dependent asymmetry operator $A^{(s)}$ is that
$\tilde A^{(s)}$ has lower order.
Namely,
Theorem~\ref{Theorem order of A(s)}
and
formula \eqref{The reference operator equation 4 symmetrised}
imply that
$\tilde A^{(s)}$ is a self-adjoint pseudodifferential operator of order $-4-s$,
which means, in particular, that it is of trace class for $s>-1$.

The integral kernel $\tilde{\mathfrak{a}}^{(s)}(x,y)$ of the operator
$\tilde A^{(s)}$ is continuous as a function of the variables $x,y\in M$ and $s\in(-1,2)$.
Formulae
\eqref{regularised local trace of curl}
and
\eqref{The reference operator equation 8 symmetrised}
imply that
\begin{equation*}
\label{reduction of Proof of Theorem main result 1}
\psi_{\operatorname{curl}}^\loc(x)
=
\tilde{\mathfrak{a}}^{(0)}(x,x)\,.
\end{equation*}
But
\begin{equation*}
\label{reduction of Proof of Theorem main result 2}
\tilde{\mathfrak{a}}^{(0)}(x,x)
=
\lim_{s\to0^+}\mathfrak{a}^{(s)}(x,x)\,,
\end{equation*}
where $\mathfrak{a}^{(s)}(x,y)$ is the integral kernel of
the original parameter-dependent asymmetry operator $A^{(s)}$.
Hence, in order to prove Theorem~\ref{Theorem main result} it is sufficient to prove that
\begin{equation}
\label{reduction of Proof of Theorem main result 3}
\mathfrak{a}^{(s)}(x,x)
=
\eta_{\operatorname{curl}}^\loc(x;s)
\quad\text{for}\quad
s\in(0,1)\,.
\end{equation}

Let us fix an $x\in M$ and examine the behaviour of the quantities
$\mathfrak{a}^{(s)}(x,x)$
and
$\eta_{\operatorname{curl}}^\loc(x;s)$
as functions of the real variable $s$.

\textbf{Observation 1\ }
For $s>3$ we have
$\mathfrak{a}^{(s)}(x,x)
=
\eta_{\operatorname{curl}}^\loc(x;s)$,
see \eqref{From As to eta equation 2}.

\textbf{Observation 2\ }
The function $\mathfrak{a}^{(s)}(x,x)$ is real analytic for $s\in(0,+\infty)$.
Indeed, we are looking at a parameter-dependent integral operator with continuous integral kernel,
an operator constructed explicitly, and in this construction the parameter $s$ appears in
analytic fashion. The full symbol of the pseudodifferential operator $A^{(s)}$ is analytic
in the variable $s$ and the infinitely smooth part of $A^{(s)}$
(the part not described by the symbol)
is analytic in $s$ as well.

Combining Observations 1 and 2 we conclude that
$\mathfrak{a}^{(s)}(x,x)
=
\eta_{\operatorname{curl}}^\loc(x;s)$
for all $s\in(0,+\infty)$.
This implies \eqref{reduction of Proof of Theorem main result 3}
and completes the proof of Theorem~\ref{Theorem main result}.
\qed

\section{Concluding remarks}
\label{Concluding remarks}

Before concluding our paper, let us briefly comment on the potential implications of our results on the study of eta functions more broadly.

The key tool in our analysis was the use of the parameter-dependent asymmetry operator $A^{(s)}$,
an invariantly defined scalar pseudodifferential operator. We established that for $s>0$
our operator $A^{(s)}$ and eta function $\eta_{\operatorname{curl}}(s)$ are related in accordance with Theorem~\ref{theorem trace A(s)}.
Furthermore, as per Remark~\ref{remark about -1 and -3} we have $A^{(-1)}=A^{(-3)}=0$. This makes one wonder whether the eta function itself vanishes at $s=-1$, or $s=-3$, or both.

It appears to be a known fact \cite{Dowker} that for the special case of the Berger sphere
the eta function $\eta_{\operatorname{curl}}(s)$ vanishes at $s=-1$. However, identifying zeros of the eta function for general Riemann manifolds would require very delicate analysis of analytic continuation to negative values of $s$. Regarding $s=-3$ one faces the additional impediment of circumventing the pole that generically exists at $s=-2$.

Transforming the above intuitive arguments into rigorous mathematical analysis is beyond the scope of the current paper.

\section*{Acknowledgements}
\addcontentsline{toc}{section}{Acknowledgements}

MC was supported by EPSRC Fellowship EP/X01021X/1.

\begin{appendices}

\section{Symbol of the Hodge Laplacian}
\label{Symbol of the Hodge Laplacian}

\begin{theorem}
\label{theorem symbol of the Hodge Laplacian}
Let
\begin{equation*}
h_\alpha{}^\beta(x,\xi)
\sim
\|\xi\|^{2}\delta_\alpha{}^\beta
+
[h_1]_\alpha{}^\beta(x,\xi)
+
[h_0]_\alpha{}^\beta(x,\xi)
\end{equation*}
be the (left) symbol of the operator $-\boldsymbol{\Delta}$.
Then in geodesic normal coordinates centred at $x=0$ we have
\begin{equation}
\label{10 August 2023 equation 16}
[h_1]_\alpha{}^\beta(x,\xi)
=
i\left([a_1]_\alpha{}^{\beta\gamma}{}_{\mu}+[b_1]_\alpha{}^{\beta\gamma}{}_{\mu\nu}\,x^\nu\right)\xi_\gamma\,x^\mu
+
O(|\xi|\,|x|^3)\,,
\end{equation}
\begin{equation}
\label{10 August 2023 equation 17}
[h_0]_\alpha{}^\beta(x)
=
[a_0]_\alpha{}^\beta+[b_0]_\alpha{}^\beta{}_\nu\,x^\nu
+
O(|x|^2)\,,
\end{equation}
where
\begin{equation}
\label{theorem symbol of the Hodge Laplacian equation 3}
[a_0]_\alpha{}^\beta
:=
\Ric_{\alpha}{}^\beta(0),
\end{equation}
\begin{equation}
\label{theorem symbol of the Hodge Laplacian equation 3bis}
[a_1]_\alpha{}^{\beta\gamma}{}_{\mu}
:=
\Ric^\gamma{}_\mu(0)\,\delta_{\alpha}{}^\beta
-\frac23
\left(\Riem^\gamma{}_\mu{}^\beta{}_\alpha(0) +\Riem^\beta{}^\gamma{}_\alpha{}_\mu(0)
\right) ,
\end{equation}
\begin{multline}
\label{10 August 2023 equation 18}
[b_1]_\alpha{}^{\beta\gamma}{}_{\mu\nu}
:=
\left[
\frac12\nabla_\mu\operatorname{Ric}^\gamma{}_\nu
-
\frac1{12}\nabla^\gamma\operatorname{Ric}_{\mu\nu}
\right]
\delta_\alpha{}^\beta
\\
-
\frac16
\left[
\nabla_\alpha\operatorname{Riem}^\gamma{}_\mu{}^\beta{}_\nu
-3
\nabla_\mu\operatorname{Riem}^\gamma{}_\alpha{}^\beta{}_\nu
+5
\nabla_\nu\operatorname{Riem}^\gamma{}_\mu{}^\beta{}_\alpha
\right],
\end{multline}
\begin{equation}
\label{10 August 2023 equation 19}
[b_0]_\alpha{}^\beta{}_\nu
:=
-
\frac{1}{6}
\nabla^\beta\operatorname{Ric}_{\alpha\nu}
+
\frac{1}{2}
\nabla_\alpha\operatorname{Ric}^\beta{}_\nu
+
\frac{1}{2}
\nabla_\nu\operatorname{Ric}_\alpha{}^\beta\,.
\end{equation}
\end{theorem}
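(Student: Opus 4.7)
The plan is a direct symbolic computation of the left symbol of $-\boldsymbol{\Delta}$ acting on 1-forms, followed by Taylor expansion of its homogeneous components in geodesic normal coordinates. I would first write $-\boldsymbol{\Delta}$ as an explicit second-order linear differential operator in an arbitrary local coordinate system. Two routes are available: one can execute the differentiations implicit in \eqref{8 December 2023 equation 5}--\eqref{8 December 2023 equation 3}, or, more efficiently, invoke the Weitzenböck identity $-\boldsymbol{\Delta} = \nabla^*\nabla + \Ric$ on 1-forms and expand the Bochner Laplacian $\nabla^*\nabla = -g^{\mu\nu}\nabla_\mu\nabla_\nu$ in partial derivatives. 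Either way one obtains a canonical form
\begin{equation*}
(-\boldsymbol{\Delta})_\alpha{}^\beta = -g^{\mu\nu}(x)\,\delta_\alpha{}^\beta\,\partial_\mu\partial_\nu + Q^\mu{}_\alpha{}^\beta(x)\,\partial_\mu + R_\alpha{}^\beta(x),
\end{equation*}
with $Q$ and $R$ polynomial in $g$, $\Gamma$, $\partial\Gamma$ and $\Ric$. Using the rule $\sigma(\partial/\partial x^\mu) = i\xi_\mu$, the left symbol decomposes into homogeneous components $h_2(x,\xi) = g^{\mu\nu}(x)\xi_\mu\xi_\nu\,\delta_\alpha{}^\beta$, $h_1(x,\xi) = i\,Q^\mu{}_\alpha{}^\beta(x)\,\xi_\mu$ and $h_0(x) = R_\alpha{}^\beta(x)$, which are smooth matrix-valued functions of $x$ that can be read off by inspection.

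Specialising to geodesic normal coordinates centred at $x = 0$, I would next Taylor expand $g^{\mu\nu}$, $\Gamma$, $\partial\Gamma$ (and $\rho$, where needed) to the orders required. Starting from
\begin{equation*}
g_{\alpha\beta}(x) = \delta_{\alpha\beta} - \tfrac{1}{3}\Riem_{\alpha\mu\beta\nu}(0)\,x^\mu x^\nu - \tfrac{1}{6}\nabla_\sigma\Riem_{\alpha\mu\beta\nu}(0)\,x^\sigma x^\mu x^\nu + O(|x|^4)
\end{equation*}
and the induced expansions of $g^{\mu\nu}$ and $\Gamma^\gamma{}_{\alpha\beta}(x) = -\tfrac{1}{3}[\Riem^\gamma{}_{\alpha\beta\mu}(0) + \Riem^\gamma{}_{\beta\alpha\mu}(0)]\,x^\mu + O(|x|^2)$, the Taylor coefficients of $h_1$ up to order $|x|^2$ and of $h_0$ up to order $|x|$ are extracted term by term. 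The vanishing of $\Gamma$ at the origin immediately forces $h_1(0,\xi) = 0$, which accounts for $h_1$ starting at order $|x||\xi|$ as in \eqref{10 August 2023 equation 16}; and the Weitzenböck $+\Ric$ contribution, combined with the piece arising from $\partial\Gamma(0)$ inside $\nabla^*\nabla$, produces $[a_0] = \Ric_\alpha{}^\beta(0)$ after invoking the standard curvature symmetries.

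The principal obstacle lies in the algebraic simplification of $[a_1]$, $[b_1]$ and $[b_0]$. These tensors are contractions of $\Riem(0)$, $\nabla\Riem(0)$ and $\nabla\Ric(0)$ with $\xi$ and the matrix indices $\alpha,\beta$, and the raw expressions produced by the previous step contain numerous terms that appear distinct but coincide modulo the Riemann tensor symmetries and the first and second Bianchi identities (together with the contracted Bianchi identity $\nabla^\gamma\Ric_{\gamma\mu} = \tfrac{1}{2}\nabla_\mu\Sc$). Disciplined index bookkeeping and repeated application of these identities is what ultimately collapses the answer to the compact forms displayed in \eqref{theorem symbol of the Hodge Laplacian equation 3bis}, \eqref{10 August 2023 equation 18} and \eqref{10 August 2023 equation 19}. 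The computation is entirely mechanical; no analytic input beyond the classical normal-coordinate expansions and the product rule for symbols \eqref{5 November 2021 equation 1} is required.
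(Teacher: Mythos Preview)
The paper states this theorem without proof; it is presented as a computational fact in Appendix~\ref{Symbol of the Hodge Laplacian}, with a remark pointing to \cite[Appendix~F]{curl} for the special case $\Riem(0)=0$. Your proposal is precisely the sort of argument one would supply to establish it, and both routes you outline --- direct expansion of the local expressions \eqref{8 December 2023 equation 5}--\eqref{8 December 2023 equation 3} for $\dr\delta$ and $\delta\dr$, or the Weitzenb\"ock identity $-\boldsymbol{\Delta}=\nabla^*\nabla+\Ric$ followed by expanding the Bochner Laplacian --- are valid and lead to the same answer. Your identification of the main labour as the algebraic collapse of the raw Taylor output via the Riemann symmetries and Bianchi identities is accurate; there is no hidden analytic step. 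In short, your plan is correct and constitutes the natural proof the paper omits.
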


\begin{remark}
Note that the (left) symbol of $-\boldsymbol{\Delta}$ was computed --- under the special assumption that $\Riem(0)=0$ --- in \cite[Appendix~F]{curl}. Theorem~\ref{theorem symbol of the Hodge Laplacian} agrees with \cite[Lemma~F.2]{curl} when $\Riem(0)=0$. To ease the comparison, let us point out that in \cite{curl} the symbol $h$ was denoted by $q$, the quantity $b_1$ by $a$, and the quantity $b_0$ by $b$.
\end{remark}

\begin{lemma}
\label{lemma subprincipal symbol powers of the Hodge Laplacian}
We have\footnote{Recall that the subprincipal symbol of a pseudodifferential operator acting on 1-forms is defined in accordance with \cite[Definition~3.2]{curl}.}
\begin{equation}
\label{lemma subprincipal symbol powers of the Hodge Laplacian equation 1}
[(-\boldsymbol{\Delta})^{-\frac{s+1}{2}}]_\sub =0\,.
\end{equation}
\end{lemma}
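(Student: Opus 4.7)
The plan is to verify that the subprincipal symbol $[(-\boldsymbol{\Delta})^{-(s+1)/2}]_\sub$ vanishes pointwise by working at the origin of geodesic normal coordinates centred at an arbitrarily chosen point $x_0\in M$. Since the subprincipal symbol is a covariant object (cf.~\cite[Remark~3.3]{curl}), establishing its vanishing at every such $x_0$ is equivalent to the global statement \eqref{lemma subprincipal symbol powers of the Hodge Laplacian equation 1}.

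First I would choose geodesic normal coordinates with $x_0=0$. In these coordinates one has $g^{\mu\nu}(0)=\delta^{\mu\nu}$, $\partial_\rho g^{\mu\nu}(0)=0$ and $\Gamma^\alpha{}_{\beta\gamma}(0)=0$; consequently $(\|\xi\|^2)_{x^\mu}\big|_{x=0}=0$, while Theorem~\ref{theorem symbol of the Hodge Laplacian} gives $[h_1]_\alpha{}^\beta(0,\xi)=0$ for every $\xi$, since both the $[a_1]$ and $[b_1]$ contributions in \eqref{10 August 2023 equation 16} carry at least one power of $x$. Next, I would express the complex power via the resolvent, as in \eqref{complex power of Delta to appropriate power}, and compute the sub-leading homogeneous component $[q_{-s-2}]_\alpha{}^\beta$ of its symbol. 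This is precisely formula~\eqref{16 November 2023 equation 7}:
\begin{equation*}
[q_{-s-2}]_\alpha{}^\beta = -\frac{s+1}{2\|\xi\|^{s+3}}[h_1]_\alpha{}^\beta - \frac{i(s+1)(s+3)}{4\|\xi\|^{s+5}}\, g^{\mu\gamma}\xi_\gamma\, (\|\xi\|^2)_{x^\mu}\,\delta_\alpha{}^\beta.
\end{equation*}
Both summands vanish at $x=0$ by the previous observations, and hence $[q_{-s-2}]_\alpha{}^\beta(0,\xi)=0$.

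The final step is to pass from the plain sub-leading homogeneous component to the covariantly-defined subprincipal symbol of \cite[Definition~3.2]{curl}. The difference involves the mixed partial $\partial^2 q_{-s-1}/\partial x^\mu\partial\xi_\mu$ and a correction built out of Christoffel symbols. Since $q_{-s-1}=\|\xi\|^{-s-1}\delta_\alpha{}^\beta$ depends on $x$ only through the metric whose first derivatives vanish at the origin, this mixed partial vanishes at $x=0$; and the Christoffel-symbol correction vanishes there as well. Therefore $[(-\boldsymbol{\Delta})^{-(s+1)/2}]_\sub(x_0,\xi)=0$, and by the arbitrariness of $x_0$ one obtains \eqref{lemma subprincipal symbol powers of the Hodge Laplacian equation 1}.

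The main (though largely notational) obstacle I anticipate is unpacking the covariant modification in \cite[Definition~3.2]{curl} carefully enough to confirm that it is constructed from Christoffel symbols alone and not from their derivatives, so that it contributes nothing at the origin of geodesic normal coordinates. This is precisely the device underlying the identification $\curl_\sub=0$ in \cite[Lemma~3.6]{curl}, so once that structure is made explicit the argument above is routine. As a cross-check, one notes that the whole scheme can alternatively be viewed as a constructive instance of the general principle that, for a self-adjoint positive operator with scalar principal symbol and vanishing covariant subprincipal symbol, all real powers inherit the property.
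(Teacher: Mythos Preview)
Your proposal is correct and follows essentially the same route as the paper's proof: fix geodesic normal coordinates, use covariance of the subprincipal symbol to reduce to the origin, invoke \eqref{16 November 2023 equation 7} together with $[h_1](0,\xi)=0$ and $(\|\xi\|^2)_{x^\mu}|_{x=0}=0$ to get $q_{-s-2}(0,\xi)=0$, and then observe that the covariant correction terms in the subprincipal symbol (mixed $x$--$\xi$ derivative of the principal symbol and the Christoffel contribution) vanish at the origin of normal coordinates. The paper compresses your last step into the single observation that $[(-\boldsymbol{\Delta})^{-(s+1)/2}]_\prin=|\xi|^{-(s+1)/2}\mathrm{I}+O(|x|^2|\xi|^{-(s+1)/2})$ in these coordinates, which is precisely what makes those corrections disappear.
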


\begin{proof}
Let us fix $z\in M$ and choose geodesic normal coordinates centred at $z=0$. Since the subprincipal symbol is a covariant quantity under changes of local coordinates, it is enough to show that
\begin{equation*}
\label{proof lemma subprincipal symbol powers of the Hodge Laplacian equation 1}
[(-\boldsymbol{\Delta})^{-\frac{s+1}{2}}]_\sub(0,\xi)=0\,.
\end{equation*}
Since, clearly, in the chosen coordinate system we have
\begin{equation*}
\label{proof lemma subprincipal symbol powers of the Hodge Laplacian equation 2}
[(-\boldsymbol{\Delta})^{-\frac{s+1}{2}}]_\prin(x,\xi)=|\xi|^{-\frac{s+1}{2}}\mathrm{I}+O(|x|^2|\xi|^{-\frac{s+1}{2}}),
\end{equation*}
formula \cite[Definition~3.6]{curl} implies
\begin{equation*}
\label{proof lemma subprincipal symbol powers of the Hodge Laplacian equation 3}
[(-\boldsymbol{\Delta})^{-\frac{s+1}{2}}]_\sub(0,\xi)=q_{-s-2}(0,\xi)\,,
\end{equation*}
where we are using notation from
\eqref{symbol q of Delta -s-1/2}.
But formulae \eqref{16 November 2023 equation 7} and \eqref{10 August 2023 equation 16} immediately give us 
\begin{equation}
\label{proof lemma subprincipal symbol powers of the Hodge Laplacian equation 4}
q_{-s-2}(0,\xi)=0\,,
\end{equation}
which concludes the proof.
\end{proof}

\begin{remark}
Of course, formula \eqref{lemma subprincipal symbol powers of the Hodge Laplacian equation 1} can be equivalently rewritten as
\begin{equation*}
[(-\boldsymbol{\Delta})^r]_\sub =0, \qquad r\in \mathbb{R}.
\end{equation*}
\end{remark}

\section{Proof of Lemma~\ref{lemma Apt}}
\label{Proof of Lemma Apt}

The following two lemmata complete the proof of Lemma~\ref{lemma Apt}. Recall that we have fixed a point $z\in M$ and chosen geodesic normal coordinates centred at $z$.

\begin{lemma}
\label{lemma apt -2}
We have
\begin{equation*}
\label{lemma apt -2 equation 1}
(a^{(s)}_\mathrm{pt})_{-s-2}(z,\xi)=0.
\end{equation*}
\end{lemma}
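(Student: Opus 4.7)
The plan is to extract the order $-s-2$ homogeneous component of the symbol of $A^{(s)}_\mathrm{pt}$ at $z=0$ via the amplitude-to-symbol operator $\mathcal{S}_\mathrm{left}$ from~\eqref{proof subprincipal symbol operators 1 forms equation 6}, and then to show that the resulting expression vanishes on purely algebraic grounds stemming from the symmetries of the Riemann curvature tensor in dimension~$3$.

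By~\eqref{As pt} and~\eqref{As diag}, $A^{(s)}_\mathrm{pt}$ is the pseudodifferential operator (modulo $\Psi^{-\infty}$) with amplitude $t_\alpha{}^\beta(x,\xi)\bigl[Z_\beta{}^\alpha(y,x)-\delta_\beta{}^\alpha\bigr]$, where $t(x,\xi)\sim[t_{-s}]+[t_{-s-1}]+\dots$ is the symbol of $\curl\,(-\boldsymbol{\Delta})^{-(s+1)/2}$ given by~\eqref{symbol curl Delta -s-1/2}; in particular, in the chosen normal coordinates $[t_{-s}]_\alpha{}^\beta(0,\xi)=i\,\varepsilon_\alpha{}^{\mu\beta}\,\xi_\mu\,|\xi|^{-s-1}$. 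Combining~\eqref{prop: expansion of parallel transoport equation 2} with~\eqref{aux4} yields $Z_\beta{}^\alpha(y,0)-\delta_\beta{}^\alpha=-\tfrac{1}{6}\,\operatorname{Riem}^\alpha{}_{\mu\beta\nu}(0)\,y^\mu y^\nu+O(|y|^3)$, so that both this quantity and its first $y$-derivative vanish at $y=0$. This immediately rules out any contribution from $\mathcal{S}_0$ or $\mathcal{S}_{-1}$, irrespective of which homogeneous component of $t$ is paired with them, so the only contribution to $(a^{(s)}_\mathrm{pt})_{-s-2}(0,\xi)$ comes from $\mathcal{S}_{-2}$ applied to the leading term, and equals
\[
-\tfrac{1}{2}\,\frac{\partial^2[t_{-s}]_\alpha{}^\beta(0,\xi)}{\partial\xi_\gamma\partial\xi_\delta}\cdot\Bigl(-\tfrac{1}{6}\Bigr)\bigl[\operatorname{Riem}^\alpha{}_{\gamma\beta\delta}(0)+\operatorname{Riem}^\alpha{}_{\delta\beta\gamma}(0)\bigr].
\]

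The main (and essentially only) obstacle is verifying that the right-hand side of this display vanishes. Since the tensor $T^{\mu\gamma\delta}(\xi):=\partial^2\bigl(\xi_\mu|\xi|^{-s-1}\bigr)/\partial\xi_\gamma\partial\xi_\delta$ is symmetric in $\gamma$ and $\delta$, the claim reduces to the purely algebraic identity
\[
\mathrm{sym}_{\gamma\delta}\bigl[\varepsilon^{\alpha\mu\beta}\operatorname{Riem}_{\alpha\gamma\beta\delta}(0)\bigr]=0\qquad\text{in dimension $d=3$.}
\]
I would establish this in two complementary ways. Directly, using the $3$-dimensional expression of $\operatorname{Riem}$ in terms of $\operatorname{Ric}$ and $\operatorname{Sc}$ from~\cite[formula~(6.24)]{curl}: each resulting piece either contains the vanishing contraction $\varepsilon^{\alpha\mu\beta}\operatorname{Ric}_{\alpha\beta}=0$ (symmetric tensor against antisymmetric symbol), or pairs up with another to cancel after symmetrisation in $\gamma,\delta$, with the scalar-curvature terms disappearing via $\varepsilon^{\delta\mu\gamma}+\varepsilon^{\gamma\mu\delta}=0$. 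Invariantly: the first algebraic Bianchi identity $\operatorname{Riem}^\alpha{}_{[\gamma\beta\delta]}=0$, combined with the pair-symmetry $\operatorname{Riem}_{\alpha\beta\gamma\delta}=\operatorname{Riem}_{\gamma\delta\alpha\beta}$ and the symmetry of $\operatorname{Ric}$, reduces the symmetrised contraction to zero. This is precisely the mechanism responsible for the disappearance of the ``(*)-marked'' terms in the proof of Theorem~\ref{Theorem order of A(s)}(b), whose strategy the present argument closely parallels.
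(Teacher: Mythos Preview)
Your proposal is correct and follows essentially the same route as the paper: extract the order $-s-2$ component via $\mathcal{S}_{-2}$ acting on $[t_{-s}]$ against the quadratic term of the parallel-transport expansion, then kill the resulting contraction by a symmetry argument. The only difference is in the final algebraic step: you invoke the 3-dimensional Riemann--Ricci formula and the first Bianchi identity, whereas the paper observes more directly that pair symmetry $\operatorname{Riem}_{\alpha\gamma\beta\delta}=\operatorname{Riem}_{\beta\delta\alpha\gamma}$ alone, after symmetrisation in $(\gamma,\delta)$, renders the Riemann factor symmetric in $(\alpha,\beta)$, which annihilates against the antisymmetric $\varepsilon^{\alpha\mu\beta}$; neither Bianchi nor dimension~3 is actually needed.
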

\begin{proof}
Substituting
\eqref{prop: expansion of parallel transoport equation 2}
and
\eqref{aux4}
into
\eqref{As in normal coordinates},
subtracting the diagonal contribution \eqref{As diag}, and integrating by parts, we obtain
\begin{equation}
\label{apt -2 equation 1}
(a^{(s)}_\mathrm{pt})_{-s-2}(z,\xi)=\frac16\, \Riem^{\alpha}{}_{\mu\kappa\nu}(z)\,\dfrac{\partial^2[t_{-s}]_{\alpha}{}^\kappa(z,\xi)}{\partial \xi_\mu \partial \xi_\nu}
=
\frac16 \,\Riem_{\alpha\mu\kappa\nu}(z)\,\dfrac{\partial^2[i\varepsilon^{\alpha\gamma\kappa} \xi_\gamma\, |\xi|^{-s-1}]}{\partial \xi_\mu \partial \xi_\nu}\,,
\end{equation}
where $|\cdot|$ is the Euclidean norm.
By the symmetries of the Riemann tensor we have
\begin{equation}
\label{riemann1}
\Riem_{\alpha\mu\kappa\nu}=\frac12(R_{\alpha\mu\kappa\nu}+\Riem_{\kappa\nu\alpha\mu}).
\end{equation}
Since 
\begin{equation}
\label{riemann1a}
\frac{\partial^2[i\varepsilon^{\alpha\gamma\kappa} \xi_\gamma\, |\xi|^{-s-1}]}{\partial \xi_\mu \partial \xi_\nu}
\end{equation}
is symmetric in the pair of indices $\mu$ and $\nu$, we can replace \eqref{riemann1} in \eqref{apt -2 equation 1} with its symmetrised version
\begin{equation}
\label{riemann2}
\Riem_{\alpha\mu\kappa\nu}=\frac12(\Riem_{\alpha\mu\kappa\nu}+\Riem_{\kappa\nu\alpha\mu}+\Riem_{\alpha\nu\kappa\mu}+\Riem_{\kappa\mu\alpha\nu}).
\end{equation}
But the quantity \eqref{riemann2} is symmetric in the pair of indices $\alpha$ and $\kappa$, whereas \eqref{riemann1a} is antisymmetric in the same pair of indices. Therefore \eqref{apt -2 equation 1} vanishes.
\end{proof}

\begin{lemma}
\label{lemma apt -3}
We have
\begin{equation*}
\label{lemma apt -3 equation 1}
(a^{(s)}_\mathrm{pt})_{-s-3}(z,\xi)=0.
\end{equation*}
\end{lemma}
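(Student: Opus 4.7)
The approach follows the amplitude-to-symbol strategy of Lemma~\ref{lemma apt -2}, one order deeper. Applying $\mathcal{S}_\mathrm{left}$ of~\eqref{proof subprincipal symbol operators 1 forms equation 6} to the amplitude $t_\alpha{}^\beta(x,\xi)\,(Z-I)_\beta{}^\alpha(y,x)$ that defines $A^{(s)}_\mathrm{pt}$, and writing, in geodesic normal coordinates centred at $z=0$,
\begin{equation*}
(Z-I)_\beta{}^\alpha(y,0)=Q_2(y)_\beta{}^\alpha+Q_3(y)_\beta{}^\alpha+O(|y|^4)
\end{equation*}
(the quadratic and cubic parts of~\eqref{prop: expansion of parallel transoport equation 2}, via~\eqref{aux4}), one observes that only components $\mathcal{S}_{-k}$ with $k\ge 2$ can contribute at $y=x=0$. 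Collecting homogeneous pieces of degree $-s-3$ in $\xi$ gives
\begin{equation*}
(a^{(s)}_\mathrm{pt})_{-s-3}(0,\xi)=\mathcal{I}_1+\mathcal{I}_2,
\end{equation*}
where $\mathcal{I}_1$ is the $\mathcal{S}_{-2}$-contribution of $[t_{-s-1}]_\alpha{}^\beta(0,\xi)\cdot Q_2(y)_\beta{}^\alpha$ and $\mathcal{I}_2$ the $\mathcal{S}_{-3}$-contribution of $[t_{-s}]_\alpha{}^\beta(0,\xi)\cdot Q_3(y)_\beta{}^\alpha$.

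The piece $\mathcal{I}_1$ vanishes for free. Formulae~\eqref{symbol curl Delta -s-1/2} and~\eqref{16 November 2023 equation 7}, combined with the identity $[h_1]_\gamma{}^\beta(0,\xi)\equiv 0$ from~\eqref{10 August 2023 equation 16} and the standard facts that in normal coordinates $\partial_{x^\mu}\|\xi\|^{2}|_{x=0}=0$ and $\partial_{x^\mu}\|\xi\|^{-s-1}|_{x=0}=0$, imply that $[q_{-s-2}]_\gamma{}^\beta(0,\xi)\equiv 0$ and hence $[t_{-s-1}]_\alpha{}^\beta(0,\xi)\equiv 0$ as a function of $\xi$. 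Its $\xi$-derivatives therefore all vanish at the origin, and $\mathcal{I}_1=0$.

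The content of the lemma is the identity $\mathcal{I}_2=0$. Substituting $[t_{-s}]_\alpha{}^\beta(0,\xi)=i\varepsilon_\alpha{}^{\mu\beta}\xi_\mu|\xi|^{-s-1}$ and $Q_3(y)_\beta{}^\alpha=\tfrac{1}{6}\,\partial^2_{y^\nu y^\rho}\Gamma^\alpha{}_{\tau\beta}(0)\,y^\tau y^\nu y^\rho$ into $\mathcal{S}_{-3}$ yields, up to an explicit overall constant,
\begin{equation*}
\mathcal{I}_2\propto \varepsilon_\alpha{}^{\mu\beta}\,\partial^3_{\xi_{\gamma_1}\xi_{\gamma_2}\xi_{\gamma_3}}\!\left(\xi_\mu|\xi|^{-s-1}\right)\sum_{\sigma\in S_3}\partial^2_{y^{\gamma_{\sigma(2)}}y^{\gamma_{\sigma(3)}}}\Gamma^\alpha{}_{\gamma_{\sigma(1)}\beta}(0).
\end{equation*}
In geodesic normal coordinates $\partial^2_{y^\nu y^\rho}\Gamma^\alpha{}_{\tau\beta}(0)$ is a pure linear combination of components of $\nabla\Riem(0)$, since the $O(\Riem(0)^2)$ terms in the Taylor expansion of $\Gamma$ appear only from order three onward. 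Expressing $\nabla\Riem(0)$ via $\nabla\Ric(0)$ in dimension three, as in~\cite[formula~(6.24)]{curl}, reduces $\mathcal{I}_2$ to a finite linear combination of contractions of $\varepsilon$, $\nabla\Ric(0)$ and $\xi$'s. A structural enumeration then shows that, up to factors of $|\xi|^2$, the only admissible nonzero scalar of the prescribed homogeneity is a multiple of $|\xi|^{-s-5}\,\varepsilon^{\alpha\beta\gamma}\nabla_\alpha\Ric_\beta{}^\rho(0)\,\xi_\gamma\xi_\rho$; it remains to compute its coefficient explicitly and observe that it is zero, using the antisymmetry of $\varepsilon$, the symmetry of the Ricci tensor and the first algebraic Bianchi identity, exactly along the lines of~\eqref{23 November 2023 equation 9} and~\eqref{23 November 2023 equation 11}.

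The main obstacle is the bookkeeping needed in this final calculation. The third $\xi$-derivative of $\xi_\mu|\xi|^{-s-1}$ generates several tensor structures with $s$-dependent coefficients, each carrying a different power of $|\xi|$, and the symmetrised coefficient $\sum_\sigma\partial^2\Gamma(0)$ branches similarly. The practical strategy is to collect contributions by their $|\xi|^{-s-3-2k}$ prefactor and to verify the cancellation within each group, mirroring the organisation of the calculation that leads from~\eqref{16 November 2023 equation 15} to~\eqref{23 November 2023 equation 10} in the proof of Theorem~\ref{Theorem order of A(s)}(b).
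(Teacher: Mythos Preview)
Your decomposition $\mathcal{I}_1+\mathcal{I}_2$ and the treatment of $\mathcal{I}_1$ agree with the paper: both observe that $[t_{-s-1}]_\alpha{}^\beta(0,\xi)=0$ in normal coordinates, so only the cubic term $Q_3$ of the parallel-transport expansion contributes at order $-s-3$.

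For $\mathcal{I}_2$, however, your route is genuinely different and substantially more laborious than the paper's. You propose to pass to $\nabla\Ric$ via the three-dimensional identity, enumerate invariants, and then compute the coefficient of the unique survivor explicitly---a calculation you acknowledge is the main obstacle and which you do not actually carry out. The paper bypasses this computation entirely with a pure symmetry argument: since the third $\xi$-derivative $\partial^3_{\xi_\sigma\xi_\mu\xi_\nu}[\varepsilon^{\alpha\gamma\kappa}\xi_\gamma|\xi|^{-s-1}]$ is fully symmetric in $\sigma,\mu,\nu$, one may replace $\partial^2_{x^\mu x^\nu}\Gamma_{\alpha,\sigma\kappa}(0)$ by its symmetrisation in those three indices, which is the well-known quantity $\tfrac12\nabla_\nu\Riem_{\alpha\sigma\mu\kappa}(0)$. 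Using the pair-exchange symmetry $\Riem_{\alpha\sigma\mu\kappa}=\Riem_{\mu\kappa\alpha\sigma}$ and then symmetrising once more in $\mu,\sigma$ (still harmless against the symmetric $\xi$-derivative), one obtains an expression manifestly symmetric in $\alpha,\kappa$. This is contracted against $\varepsilon^{\alpha\gamma\kappa}$, which is antisymmetric in $\alpha,\kappa$, so the whole thing vanishes---no coefficient tracking, no dimension-three reduction, no $s$-dependent bookkeeping.

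In short: your approach is not wrong, but it leaves the decisive step as an unperformed computation, whereas the paper's argument is a two-line symmetry observation that you should adopt instead.
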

\begin{proof}
First of all, let us observe that formulae \eqref{proof lemma subprincipal symbol powers of the Hodge Laplacian equation 4}, \eqref{symbol q of Delta -s-1/2}, \eqref{symbol curl Delta -s-1/2}, and the fact that
\begin{equation*}
\|\xi\|=|\xi|+O(|x|^2|\xi|)
\end{equation*}
imply
\begin{equation*}
[t_{-s-1}]_\alpha{}^\beta(z,\xi)=0\,,
\end{equation*}
where the $t_{-s-k}$, $k=0,1,2,\dots$, are the positively homogeneous
of degree $-s-k$ components of
the full symbol $t$ of the operator $\,\curl\, (-\boldsymbol{\Delta})^{-\frac{s+1}{2}}\,$.
Hence, on account of the expansion
\eqref{prop: expansion of parallel transoport equation 2}
and
formula \eqref{aux4},
integration by parts gives us
\begin{multline}
\label{apt -3 equation 1}
(a^{(s)}_\mathrm{pt})_{-s-3}(z,\xi)=-\frac{i}{6}\, \dfrac{\partial^2\Gamma^\alpha{}_{\sigma\kappa}}{\partial x^\mu \partial x^\nu}(z)\,\dfrac{\partial^3[t_{-s}]_{\alpha}{}^\kappa(z,\xi)}{\partial \xi_\sigma\partial \xi_\mu \partial \xi_\nu}
=
-\frac{i}{6} \,\delta_{\alpha\beta}\,\dfrac{\partial^2\Gamma^\beta{}_{\sigma\kappa}}{\partial x^\mu \partial x^\nu}(z)\,\dfrac{\partial^3[i\varepsilon^{\alpha\gamma\kappa} \xi_\gamma\, |\xi|^{-s-1}]}{\partial \xi_\sigma\partial \xi_\mu \partial \xi_\nu}\,.
\end{multline}
Next, let us notice that in formula \eqref{apt -3 equation 1} one can replace the quantity $\delta_{\alpha\beta}\,\dfrac{\partial^2\Gamma^\beta{}_{\sigma\kappa}}{\partial x^\mu \partial x^\nu}(z)$ with its symmetrised version in the three indices $\sigma$, $\mu$ and $\nu$.
It is well known \cite[Eqn.~(11.9)]{expansions} that the latter is equal to
\begin{equation}
\label{symmetrised christoffel xx}
\frac12 \nabla_\nu \Riem_{\alpha \sigma \mu \kappa}(z)\,.
\end{equation}
Using the symmetries of the Riemann curvature tensor, the expression \eqref{symmetrised christoffel xx} can be equivalently rewritten as
\begin{equation}
\label{symmetrised christoffel yy}
\frac12 \nabla_\nu \Riem_{\alpha \sigma \mu \kappa}
=
\frac14 [\nabla_\nu \Riem_{\alpha \sigma \mu \kappa}+\nabla_\nu \Riem_{\mu \kappa \alpha \sigma}].
\end{equation}
Now, when contracted with
\begin{equation}
\label{apt -3 equation 2}
\dfrac{\partial^3[i\varepsilon^{\alpha\gamma\kappa} \xi_\gamma\, |\xi|^{-s-1}]}{\partial \xi_\sigma\partial \xi_\mu \partial \xi_\nu},
\end{equation}
the quantity \eqref{symmetrised christoffel yy} can be replaced with its symmetrised version in the pair of indices $\mu$ and $\sigma$:
\begin{equation}
\label{apt -3 equation 3}
\frac18 [\nabla_\nu \Riem_{\alpha \sigma \mu \kappa}+\nabla_\nu \Riem_{\mu \kappa \alpha \sigma}
+
\nabla_\nu \Riem_{\alpha \mu \sigma \kappa}+\nabla_\nu \Riem_{\sigma \kappa \alpha \mu}]\,.
\end{equation}
But the quantity \eqref{apt -3 equation 3} is now symmetric in the pair of indices $\alpha$ and $\kappa$, whereas \eqref{apt -3 equation 2} is anti\-symmetric in the same pair of indices. Therefore \eqref{apt -3 equation 1} vanishes.
\end{proof}

%

\section{Auxiliary proposition on the leading singularity}
\label{Auxiliary lemma on the leading singularity}

Let us work in Euclidean space $\mathbb{R}^3$ equipped with Cartesian coordinates $y^\alpha$, $\alpha=1,2,3$. Let $\chi:[0,+\infty)\to \mathbb{R}$ be a compactly supported infinitely smooth scalar function such that $\chi=1$ in a neighbourhood of zero. We define a family of functions
$[f^{(s)}]^{\alpha\beta}:\mathbb{R}^3\to\mathbb{R}$ as
\begin{equation}
\label{reference function in Euclidean space}
[f^{(s)}]^{\alpha\beta}(y)
:=
\begin{cases}
0
\quad&\text{if}\quad y=0,
\\
\dfrac{y^\alpha y^\beta-\frac{1}{3}\delta^{\alpha\beta}|y|^2}{|y|^{2-s}}
\,\chi(|y|)
\quad&\text{if}\quad y\ne0,
\end{cases}
\qquad\alpha,\beta=1,2,3.
\end{equation}
Here $s$ is a real parameter.

For $s>0$ the functions $[f^{(s)}]^{\alpha\beta}$ are continuous.
For $s\in(-3,0]$ the functions $[f^{(s)}]^{\alpha\beta}$ are discontinuous at the origin,
but the singularity is integrable.
In what follows we assume that
$
s\in(-3,+\infty)
$
and examine the Fourier transforms
\begin{equation}
\label{reference function in Euclidean space Fourier}
[\hat f^{(s)}]_{\alpha\beta}(\xi)
:=
\int_{\mathbb{R}^3} e^{-iy^\gamma\xi_\gamma}\,[f^{(s)}]_{\alpha\beta}(y)\,\dr y
\end{equation}
of the functions \eqref{reference function in Euclidean space}.
In the RHS of \eqref{reference function in Euclidean space Fourier}
we lowered indices in $f^{(s)}$ using the Euclidean metric
which, in Euclidean space, doesn't change anything.
It is easy to see that the $[\hat f^{(s)}]_{\alpha\beta}:\mathbb{R}^3\to\mathbb{R}$
are bounded infinitely differentiable functions.

Note that the functions $[f^{(s)}]^{\alpha\beta}$ and $[\hat f^{(s)}]_{\alpha\beta}$,
$\alpha, \beta=1,2,3$,
have the following special properties:
\begin{equation*}
\label{special property 1}
[f^{(s)}]^{\alpha\beta}\,\delta_{\alpha\beta}=0\,,
\qquad
[\hat f^{(s)}]_{\alpha\beta}\,\delta^{\alpha\beta}=0\,,
\end{equation*}
\begin{equation*}
\label{special property 2}
\int_{|x|<r}[f^{(s)}]^{\alpha\beta}(y)\,\dr y=0\,,
\qquad
\int_{|\xi|<r}[\hat f^{(s)}]_{\alpha\beta}(\xi)\,\dr\xi=0\,,
\qquad
\forall r>0.
\end{equation*}

The following proposition is the main result of this appendix.

\begin{proposition}
\label{reference function lemma}
We have
\begin{equation}
\label{reference function lemma equation 1}
[\hat f^{(s)}]_{\alpha\beta}(\xi)
=
c(s)\,
\frac{\xi_\alpha\xi_\beta-\frac{1}{3}\delta_{\alpha\beta}|\xi|^2}{|\xi|^{5+s}}
+
O(|\xi|^{-\infty})\quad\text{as}\quad|\xi|\to+\infty\,,
\end{equation}
where $c(s)$ is defined in accordance with~\eqref{The reference operator equation 5} and the remainder, together with its partial derivatives in $\xi$ of any order, is uniform in $s$ over any closed bounded interval
\begin{equation}
\label{reference function lemma equation 2}
[a,b]\subset(-3,+\infty).
\end{equation}
\end{proposition}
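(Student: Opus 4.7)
The starting observation is that $P^{\alpha\beta}(y):=y^\alpha y^\beta-\tfrac{1}{3}\delta^{\alpha\beta}|y|^2$ is a harmonic homogeneous polynomial of degree $2$ on $\mathbb{R}^3$: it is traceless by construction, and $\Delta(y^\alpha y^\beta-\tfrac{1}{3}\delta^{\alpha\beta}|y|^2)=2\delta^{\alpha\beta}-2\delta^{\alpha\beta}=0$. Consequently, after factoring out $|y|^s$, the angular part of $[f^{(s)}]^{\alpha\beta}$ is a pure degree-$2$ tensor spherical harmonic, which makes the Fourier transform accessible through the Rayleigh plane-wave expansion. The plan is to compute $[\hat{f}^{(s)}]_{\alpha\beta}(\xi)$ directly in polar coordinates $y=r\omega$, $\omega\in S^2$: rotational invariance and tracelessness force the angular integral $\int_{S^2}e^{-ir\omega\cdot\xi}(\omega_\alpha\omega_\beta-\tfrac{1}{3}\delta_{\alpha\beta})\,d\omega$ to take the form $F(r|\xi|)(\hat\xi_\alpha\hat\xi_\beta-\tfrac{1}{3}\delta_{\alpha\beta})$ with $\hat\xi=\xi/|\xi|$; contracting with $\hat\xi^\alpha\hat\xi^\beta$ and invoking the Rayleigh expansion together with orthogonality of Legendre polynomials identifies $F(t)=-4\pi\,j_2(t)$, where $j_2$ is the spherical Bessel function of order $2$. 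Rescaling $t=r|\xi|$ yields
\begin{equation*}
[\hat{f}^{(s)}]_{\alpha\beta}(\xi)
=
-\frac{4\pi\,(\xi_\alpha\xi_\beta-\tfrac{1}{3}\delta_{\alpha\beta}|\xi|^2)}{|\xi|^{s+5}}\,I(|\xi|),
\qquad
I(R):=\int_0^\infty t^{s+2}\chi(t/R)\,j_2(t)\,dt.
\end{equation*}

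The leading asymptotic as $|\xi|\to\infty$ is then the formal limit $I_\infty:=\int_0^\infty t^{s+2}j_2(t)\,dt$, interpreted by analytic continuation in $s$. Using $j_2(t)=\sqrt{\pi/(2t)}\,J_{5/2}(t)$ and the Mellin--Bessel identity $\int_0^\infty t^{\mu-1}J_\nu(t)\,dt=2^{\mu-1}\Gamma((\nu+\mu)/2)/\Gamma((\nu-\mu+2)/2)$, valid in the strip $-\nu<\mu<3/2$ and extended meromorphically, one obtains $I_\infty=2^{s+1}\sqrt{\pi}\,\Gamma((s+5)/2)/\Gamma(1-s/2)$. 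Applying Legendre's duplication formula $\Gamma(s/2)\Gamma((s+1)/2)=2^{1-s}\sqrt{\pi}\,\Gamma(s)$, the reflection identity $\Gamma(s/2)\Gamma(1-s/2)=\pi/\sin(\pi s/2)$, and $\Gamma(s+4)=(s+3)(s+2)(s+1)s\,\Gamma(s)$, a short manipulation reduces $-4\pi I_\infty$ to the function $c(s)$ defined in \eqref{The reference operator equation 5}. The values $c(0)=-6\pi^2$ and $c(-2)=-\pi^2$ are then recovered as removable limits, since the zeros of $\sin(\pi s/2)$ at $s=0,-2$ cancel the simple poles coming from the factors $s$ and $s+2$.

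The technical heart of the argument is the uniform remainder estimate $I(R)-I_\infty=O(R^{-\infty})$, together with the analogous statement for $\xi$-derivatives. The plan here is to substitute the explicit identity $j_2(t)=(3/t^3-1/t)\sin t-(3/t^2)\cos t$ and rewrite $I(R)-I_\infty$ as a finite sum of one-dimensional oscillatory integrals of the form $\int_0^\infty t^{\mu}\,(\chi(t/R)-1)\,e^{\pm it}\,dt$ with $\mu=s+2-k$ and $k\in\{1,2,3\}$. Repeated integration by parts using $e^{\pm it}=\mp i\partial_t e^{\pm it}$ transfers derivatives onto $t^{\mu}(\chi(t/R)-1)$: the terms in which at least one derivative hits $\chi(\cdot/R)$ are supported on the transition annulus $t\sim R$ and carry an $O(R^{-j})$ factor from each such derivative; the term where all derivatives hit $t^{\mu}$ produces $t^{\mu-n}(\chi(t/R)-1)$, which, for $n$ chosen large enough depending only on the upper endpoint $b$ of the range, is integrable at infinity and estimated by $O(R^{\mu-n+1})$. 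Each IBP thus gains one power of $R^{-1}$ at the cost of a polynomial factor in $s$, so taking $n$ large and $R=|\xi|$ gives the $O(|\xi|^{-\infty})$ bound uniformly for $s\in[a,b]$. Differentiating under the integral in $\xi$ before the IBPs yields the same conclusion for arbitrary derivatives of the remainder.

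The main obstacle is precisely this uniformity in $s$. One must verify that all the constants produced by the IBPs (powers of $s$, $\Gamma$-ratios, coefficients arising from the rational expansion of $j_2$) are polynomially bounded in $s$ on the compact interval $[a,b]$, and that the meromorphically continued $I_\infty$ genuinely agrees with the limit of the IBP expression for all $s\in[a,b]$. The latter point is handled by uniqueness of analytic continuation: both sides are analytic functions of $s$ in a strip containing $[a,b]$ that coincide on the convergent subrange $-5<s<-2$ where $t^{s+2}j_2(t)\in L^1(0,\infty)$.
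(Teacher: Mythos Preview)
Your approach is correct and takes a genuinely different route from the paper. The paper fixes a coordinate direction, setting $\xi=(0,0,\lambda)$, and then evaluates the three-dimensional Fourier integral by brute force in Cartesian coordinates: the inner integrals over $y_1,y_2$ are computed explicitly, and everything is reduced to the one-dimensional oscillatory integrals of Lemmata~\ref{lemma dominican 1} and~\ref{lemma dominican 2}, which are handled by the $e^{-\epsilon y}$ regularisation. Mixed $\xi$-derivatives of the remainder are then controlled \emph{a posteriori} by observing that the remainder is covariant under Euclidean rotations, so angular derivatives can be expressed through radial ones.

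By contrast, you exploit from the outset that $y^\alpha y^\beta-\tfrac13\delta^{\alpha\beta}|y|^2$ is a degree-two solid harmonic, invoke the Rayleigh expansion to reduce the angular integral to $-4\pi j_2(r|\xi|)$ in one step, and then read off $c(s)$ from the Mellin--Bessel identity combined with the duplication and reflection formulae. This is more structural and would generalise immediately to other dimensions or other harmonic degrees, whereas the paper's calculation is specific to $d=3$ and degree two. The price you pay is reliance on special-function identities, while the paper's argument is entirely elementary.

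One point to tighten: your integration-by-parts for $\int_0^\infty t^\mu(\chi(t/R)-1)e^{\pm it}\,dt$ glosses over the boundary term at infinity, which does not vanish when $\mu\ge 0$ (i.e.\ when $s\ge k-2$ for $k=1,2,3$). This is precisely the issue that the paper's Lemma~\ref{lemma dominican 1} resolves by first inserting $e^{-\epsilon y}$ and letting $\epsilon\to0^+$; the same device (or, equivalently, your analytic-continuation argument from the absolutely convergent strip $-5<s<-2$) is needed here and should be stated explicitly. For the $\xi$-derivatives of the remainder the situation is actually cleaner than you indicate: each $R$-derivative of $I(R)$ lands on $\chi(t/R)$ and produces an integrand compactly supported in $t\sim R$, so the IBP there has no boundary terms at all.
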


The proof of Proposition~\ref{reference function lemma} relies on the following two auxiliary lemmata.
The $\xi$ in these two lemmata is 1-dimensional, i.e.~a real number.
Furthermore, we assume that $\xi>0$.

\begin{lemma}
\label{lemma dominican 1}
Let $t>-2$. Then
\begin{equation}
\label{Dominican 0}
\int_0^{+\infty}y^t\,\sin(y\xi)\,\chi(y)\,\dr y
=
\begin{cases}
\frac{1}{\xi^{t+1}}\Gamma(t+1) \cos\left( \frac{\pi t}{2}\right) &\text{if }t\neq -1,
\\
\frac\pi2 &\text{if }t=-1,
\end{cases}
+O(\xi^{-\infty})
\quad\text{as}\quad\xi\to+\infty\,,
\end{equation}
where the remainder, together with its derivatives in $\xi$ of any order, is uniform in $t$ over any closed bounded interval
$[a,b]\subset(-2,+\infty)$.
\end{lemma}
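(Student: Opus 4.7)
The plan is to split the integral as
\begin{equation*}
\int_0^{+\infty}y^t\chi(y)\sin(y\xi)\,\dr y
=\int_0^{+\infty}y^t\sin(y\xi)\,\dr y
-\int_0^{+\infty}y^t(1-\chi(y))\sin(y\xi)\,\dr y,
\end{equation*}
and to treat the two pieces separately on the base range $t\in(-2,0)$. For the first piece, the substitution $u=y\xi$ together with the classical Mellin--sine identity $\int_0^{+\infty}u^{s-1}\sin u\,\dr u=\Gamma(s)\sin(\pi s/2)$ (valid as a conditionally convergent integral for $-1<\operatorname{Re}s<1$, and reducing to the Dirichlet integral $\pi/2$ at $s=0$) produces exactly the main term of~\eqref{Dominican 0}, with the removable singularity at $t=-1$ giving the value $\pi/2$.

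For the second piece I would observe that $g(y):=y^t(1-\chi(y))$ is smooth on $[0,+\infty)$, vanishes identically in a neighbourhood of $0$ (where $\chi\equiv 1$), and satisfies $g^{(k)}(y)=O(y^{t-k})$ as $y\to+\infty$ for every $k$. Repeated integration by parts therefore produces no boundary contributions, and after $N$ steps yields a remainder bounded by a constant times $\xi^{-N}\int_\delta^{+\infty}y^{t-N}\,\dr y$, where $\delta>0$ is a lower bound for the support of $1-\chi$; this is finite as soon as $N>t+1$ and depends on $t$ only through continuous (in fact polynomial) factors. Hence the piece is $O(\xi^{-\infty})$ uniformly for $t\in[a,b]\subset(-2,0)$. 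Uniformity with respect to $\xi$-derivatives of any order follows from the same scheme, since $\partial_\xi^m$ merely multiplies the integrand by $y^m$, shifting the effective exponent from $t$ to $t+m$, and one simply takes $N$ correspondingly larger.

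To reach $t\in[0,+\infty)$, I would proceed by induction on unit shifts in $t$, run in parallel with a cosine analogue of the lemma (valid for $\tau>-1$, with main term $-\xi^{-\tau-1}\Gamma(\tau+1)\sin(\pi\tau/2)$, established by exactly the same method on the base range $(-1,0)$). For $t>0$, one integration by parts gives
\begin{equation*}
\int_0^{+\infty}y^t\chi(y)\sin(y\xi)\,\dr y
=\frac{t}{\xi}\int_0^{+\infty}y^{t-1}\chi(y)\cos(y\xi)\,\dr y
+\frac{1}{\xi}\int_0^{+\infty}y^t\chi'(y)\cos(y\xi)\,\dr y,
\end{equation*}
with vanishing boundary terms; the second summand is the Fourier integral of a smooth compactly supported function whose support avoids the origin, hence $O(\xi^{-\infty})$. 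Applying the inductive cosine statement to the first summand, and using the identities $t\,\Gamma(t)=\Gamma(t+1)$ and $-\sin(\pi(t-1)/2)=\cos(\pi t/2)$, recovers the claimed main term. The exceptional point $t=0$, which the induction bypasses (the cosine lemma at $\tau=-1$ is undefined), is handled directly: integration by parts produces the main term $\xi^{-1}$ as a boundary contribution at $y=0$, plus an $O(\xi^{-\infty})$ remainder from $\chi'$.

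The principal technical obstacle is securing the simultaneous uniformity of the $O(\xi^{-\infty})$ remainder in $t\in[a,b]$ and across $\xi$-derivatives of every order, especially near the exceptional points where the main-term formula takes a removable form (at $t=-1$, where the pole of $\Gamma(t+1)$ cancels the zero of $\cos(\pi t/2)$) or vanishes identically (at positive odd integers $t$, where $\cos(\pi t/2)=0$). This is overcome by verifying that all constants appearing in the integration-by-parts bounds depend on $t$ only through continuous factors, so that the pointwise cancellations in the main-term formula extend to jointly continuous estimates; and by observing that each application of $\partial_\xi^m$ simply relabels the exponent $t\mapsto t+m$ without disrupting either the base splitting or the induction.
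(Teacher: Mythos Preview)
Your argument is correct, but the route differs from the paper's. The paper regularises with a factor $e^{-\epsilon y}$: it writes
\[
\int_0^{+\infty}y^t\sin(y\xi)\,\chi(y)\,\dr y
=\lim_{\epsilon\to0^+}\int_0^{+\infty}y^t\sin(y\xi)\,e^{-\epsilon y}\,\dr y
+\lim_{\epsilon\to0^+}\int_0^{+\infty}y^t\sin(y\xi)\,(\chi(y)-1)\,e^{-\epsilon y}\,\dr y,
\]
evaluates the first limit via the closed-form identity
$\int_0^{+\infty}y^t\sin(y\xi)e^{-\epsilon y}\,\dr y
=(\xi^2+\epsilon^2)^{-(t+1)/2}\,\Gamma(t+1)\sin\bigl((t+1)\arctan(\xi/\epsilon)\bigr)$,
and handles the second by integration by parts. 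This covers all $t>-2$ in a single stroke, with no need for a base range, a parallel cosine lemma, or an inductive extension.

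Your approach instead relies on the conditionally convergent Mellin--sine integral, which forces the restriction to $t\in(-2,0)$ and then the two-step scaffolding (the cosine companion plus integration-by-parts induction in unit shifts) to reach larger~$t$. This is sound---the constants appearing in your bounds are indeed continuous in $t$, and the apparent blow-up of the cosine main term as $\tau\to(-1)^+$ is cancelled by the prefactor $t/\xi$---but it is noticeably longer. What your version buys is that it avoids quoting the Laplace-type closed form; what the paper's version buys is brevity and automatic uniformity over all of $(-2,+\infty)$ without case splits.
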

\begin{proof}
We have
\begin{multline}
\label{Dominican 1}
\int_0^{+\infty}y^t\,\sin(y\xi)\,\chi(y)\,\dr y
=
\lim_{\epsilon\to0^+}
\int_0^{+\infty}y^t\,\sin(y\xi)\,\chi(y)\,e^{-\epsilon y}\,\dr y
\\
=
\lim_{\epsilon\to0^+}
\int_0^{+\infty}y^t\,\sin(y\xi)\,e^{-\epsilon y}\,\dr y
+
\lim_{\epsilon\to0^+}
\int_0^{+\infty}y^t\,\sin(y\xi)\,(\chi(y)-1)\,e^{-\epsilon y}\,\dr y\,.
\end{multline}
Straightforward integration by parts gives us
\begin{equation}
\label{Dominican 2}
\lim_{\epsilon\to0^+}
\int_0^{+\infty}y^t\,\sin(y\xi)\,(\chi(y)-1)\,e^{-\epsilon y}\,\dr y
=O(\xi^{-\infty})
\quad\text{as}\quad\xi\to+\infty\,,
\end{equation}
where the remainder, together with its derivatives in $\xi$ of any order, is uniform in $t$ over any closed bounded interval
$[a,b]\subset(-2,+\infty)$.
We also have
\begin{equation}
\label{Dominican 3 alternative}
\int_0^{+\infty}y^t\,\sin(y\xi)\,e^{-\epsilon y}\,\dr y
=
\begin{cases}
\left(\xi ^2+\epsilon^2\right)^{-\frac{t+1}{2}}
\Gamma (t+1)
\sin
\left(
(t+1)
\arctan\left(\frac{\xi }{\epsilon}\right)
\right)
&\text{if }t\neq-1,
\\
\arctan\left(\frac{\xi }{\epsilon}\right)
&\text{if }t=-1.
\end{cases}
\end{equation}
Substituting \eqref{Dominican 2} and \eqref{Dominican 3 alternative} into \eqref{Dominican 1},
we arrive at \eqref{Dominican 0}.
\end{proof}

\begin{lemma}
\label{lemma dominican 2}
Let $t>-4$. Then
\begin{multline*}
\label{Dominican 4}
\int_0^{+\infty}y^t\,\left(\sin(y\xi)-y \xi\cos(y\xi)\right)\,\chi(y)\,\dr y
\\
=
\begin{cases}
\frac{t+2}{\xi^{t+1}}\Gamma(t+1) \cos\left( \frac{\pi t}{2}\right) &\text{if }t\neq -1,-2,-3,
\\
\frac\pi2 &\text{if }t=-1,
\\
\xi &\text{if }t=-2,
\\
\frac\pi4 \xi^2 &\text{if }t=-3,
\end{cases}
+O(\xi^{-\infty})
\quad\text{as}\quad\xi\to+\infty\,,
\end{multline*}
where the remainder, together with its derivatives in $\xi$ of any order, is uniform in $t$ over any closed bounded interval
$[a,b]\subset(-4,+\infty)$.
\end{lemma}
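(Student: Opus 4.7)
The plan is to reduce the claim of Lemma~\ref{lemma dominican 2} to Lemma~\ref{lemma dominican 1} by a single integration by parts in $y$, exploiting the elementary identity
\begin{equation*}
\dfrac{d}{dy}\bigl[\sin(y\xi)-y\xi\cos(y\xi)\bigr]=y\xi^{2}\sin(y\xi),
\end{equation*}
obtained by direct differentiation. Combined with the fact that $F(y,\xi):=\sin(y\xi)-y\xi\cos(y\xi)$ vanishes to third order at the origin, namely $F(y,\xi)=(y\xi)^{3}/3+O((y\xi)^{5})$, this identity makes the IBP against the antiderivative $y^{t+1}/(t+1)$ both legal and effective for any $t\in(-4,+\infty)$ with $t\ne-1$.

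For $t\ne-1$ I would carry out this integration by parts to obtain
\begin{equation*}
\int_{0}^{+\infty}y^{t}F(y,\xi)\chi(y)\,dy
=-\dfrac{\xi^{2}}{t+1}\int_{0}^{+\infty}y^{t+2}\sin(y\xi)\chi(y)\,dy
-\dfrac{1}{t+1}\int_{0}^{+\infty}y^{t+1}F(y,\xi)\chi'(y)\,dy,
\end{equation*}
the boundary contributions vanishing at $y=0$ (because $y^{t+1}F\sim y^{t+4}\xi^{3}/3$ with $t+4>0$) and at $y=+\infty$ (compact support of $\chi$). The second integral on the right is $O(\xi^{-\infty})$ uniformly in $t$, since $\chi'$ is smooth and supported in an interval bounded away from $0$ (as $\chi\equiv 1$ near the origin), so repeated IBP in $y$ produces arbitrarily fast decay in $\xi$. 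For the first integral I would apply Lemma~\ref{lemma dominican 1} at exponent $t+2>-2$; using the elementary identities $\cos(\pi(t+2)/2)=-\cos(\pi t/2)$ and $\Gamma(t+3)=(t+2)(t+1)\Gamma(t+1)$, the principal terms would collapse to $(t+2)\xi^{-t-1}\Gamma(t+1)\cos(\pi t/2)+O(\xi^{-\infty})$ for $t\ne-1,-3$. The stated special values at $t=-2$ and $t=-3$ are recovered automatically: Lemma~\ref{lemma dominican 1} at the regular exponent $0$ gives $1/\xi$, hence $\xi+O(\xi^{-\infty})$ at $t=-2$, while at the special exponent $-1$ it gives $\pi/2$, hence $\pi\xi^{2}/4+O(\xi^{-\infty})$ at $t=-3$. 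For the remaining case $t=-1$ I would use the splitting $y^{-1}F(y,\xi)=y^{-1}\sin(y\xi)-\xi\cos(y\xi)$: the first piece yields $\pi/2+O(\xi^{-\infty})$ by Lemma~\ref{lemma dominican 1} at exponent $-1$; the second is $O(\xi^{-\infty})$ by a standard IBP argument, since all derivatives of $\chi$ at $0$ vanish except $\chi(0)$ itself (and the contribution of $\chi(0)$ is purely imaginary in the complexified Fourier transform, so drops out of the real part), and subsequent multiplication by $\xi$ preserves the bound.

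The main obstacle will be the uniformity of the remainder on a compact subinterval $[a,b]\subset(-4,+\infty)$ that crosses $t=-1$: the factor $1/(t+1)$ in the IBP formula blows up there, and although it is cancelled inside the principal term (the extended expression $(t+2)\Gamma(t+1)\cos(\pi t/2)$ is jointly smooth in $t$ across $t=-1$, with limiting value $\pi/2$), it is not obviously cancelled inside the remainder. To handle this I would perform the IBP with the regularised antiderivative $w(y,t):=\int_{y_{0}}^{y}z^{t}\,dz$ for a fixed $y_{0}>0$ in the region where $\chi\equiv 1$; this function extends smoothly through $t=-1$ (to $\log(y/y_{0})$), so rerunning the IBP with this choice produces a remainder whose $t$-dependence is smooth across $t=-1$. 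Matching with the direct computation at $t=-1$ (which requires the auxiliary identity $\int_{0}^{+\infty}y\log y\sin(y\xi)\chi(y)\,dy=-\pi/(2\xi^{2})+O(\xi^{-\infty})$, obtained by differentiating Lemma~\ref{lemma dominican 1} with respect to its exponent) then yields the desired uniform $O(\xi^{-\infty})$ bound. Uniformity of the $\xi$-derivatives is inherited, since every manipulation (IBP in $y$, application of Lemma~\ref{lemma dominican 1}, multiplication by powers of $\xi$) commutes with differentiation in $\xi$, and Lemma~\ref{lemma dominican 1} already provides uniformity of its remainder together with all its $\xi$-derivatives.
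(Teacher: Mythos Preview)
Your reduction via the identity $F'(y,\xi)=y\xi^{2}\sin(y\xi)$ is a different route from the one the paper indicates. The paper omits the proof, saying only that it is ``similar to that of Lemma~\ref{lemma dominican 1}'': one inserts the damping factor $e^{-\epsilon y}$, splits off $\chi-1$ (whose contribution is $O(\xi^{-\infty})$ by integration by parts, uniformly in $t$), evaluates $\int_{0}^{\infty}y^{t}F(y,\xi)\,e^{-\epsilon y}\,\dr y$ in closed form, and lets $\epsilon\to0^{+}$. The resulting explicit expression is manifestly smooth in $t$, so uniformity of the remainder on compact subsets of $(-4,+\infty)$ comes for free. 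Your integrate-by-parts-then-quote-Lemma~\ref{lemma dominican 1} argument is correct at the level of principal terms and is slick in that it recycles the earlier lemma rather than recomputing Laplace-type integrals; it also makes the factor $t+2$ transparent.

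The one genuine gap is the uniformity near $t=-1$. Your regularised antiderivative $w(y,t)=\int_{y_{0}}^{y}z^{t}\,\dr z$ is smooth in $t$, but that alone does not yield a uniform $O(\xi^{-\infty})$ bound: after the IBP you must still extract the asymptotics of $-\xi^{2}\int_{0}^{\infty}w(y,t)\,y\sin(y\xi)\chi(y)\,\dr y$, and the singular behaviour of $w(y,t)$ as $y\to0^{+}$ for $t\le-1$ reintroduces the very difficulty you were trying to sidestep. A cleaner fix, close to what you hint at with your ``auxiliary identity'', goes as follows. Write the remainder from your original IBP as $R(t,\xi)=-(t+1)^{-1}S(t,\xi)$ with $S:=\xi^{2}R_{1}+R_{2}$, where $R_{1}$ is the remainder of Lemma~\ref{lemma dominican 1} at exponent $t+2$ and $R_{2}(t,\xi)=\int_{0}^{\infty}y^{t+1}F\chi'\,\dr y$. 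Using $\int_{0}^{\infty}(F\chi)'\,\dr y=0$ one checks directly that $S(-1,\xi)=0$ identically in $\xi$; moreover $\partial_{t}S$ is itself $O(\xi^{-\infty})$ uniformly in $t$, since the proof of Lemma~\ref{lemma dominican 1} (the $\chi-1$ piece) survives insertion of a factor $\log y$, while differentiating $R_{2}$ in $t$ only introduces a harmless $\log y$ on the support of $\chi'$. Hadamard division, $R(t,\xi)=-\int_{0}^{1}\partial_{t}S(-1+\tau(t+1),\xi)\,\dr\tau$, then gives the required uniform bound. The paper's regularisation route avoids this manoeuvre altogether.
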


\begin{proof}
The proof is similar to that of Lemma~\ref{lemma dominican 1}, hence omitted.
\end{proof}

\

\begin{proof}[Proof of Proposition~\ref{reference function lemma}]
The quantity \eqref{reference function in Euclidean space Fourier} is covariant under Euclidean rotations and reflections,
so it suffices to prove \eqref{reference function lemma equation 1} in the special case
\begin{equation}
\label{30 March 2024 equation 1}
\xi_\alpha=\lambda\,\delta_\alpha{}^3\,,
\qquad\lambda\to+\infty\,.
\end{equation}

When $\alpha\ne\beta$ both the LHS and RHS of \eqref{reference function lemma equation 1}
vanish, so it suffices to deal with the special cases
\begin{equation}
\label{30 March 2024 equation 2}
\alpha=\beta=1,
\end{equation}
\begin{equation}
\label{30 March 2024 equation 3}
\alpha=\beta=2,
\end{equation}
\begin{equation}
\label{30 March 2024 equation 4}
\alpha=\beta=3.
\end{equation}
Case \eqref{30 March 2024 equation 2} reduces to case \eqref{30 March 2024 equation 3}
by means of Euclidean rotations and reflections, so further on we deal with
\eqref{30 March 2024 equation 3} and \eqref{30 March 2024 equation 4}.

Assuming \eqref{30 March 2024 equation 1} and \eqref{30 March 2024 equation 3}, and performing the change of variables $$(y_1,y_2,y_3)\mapsto (\sqrt{r^2-y^2-z^2},y,z)$$ in \eqref{reference function in Euclidean space Fourier}, we obtain
\begin{multline}
\label{proof reference function lemma equation 2}
[\hat f^{(s)}]_{22}(\lambda)
=
\int_0^{+\infty} \left(2\int_{-r}^r \cos(\lambda z) \left(2 \underset{=\frac\pi{12}(r^2-3z^2)}{\underbrace{\int_0^{\sqrt{r^2-z^2}}
\frac{y^2-\frac13 r^2}{\sqrt{r^2-z^2-y^2}}dy}}\right)dz\right) r^{s-1}\,\chi(r)\,\dr r
\\
=
\frac\pi3\int_0^{+\infty} \left(\underset{=-4 \left[3r\lambda^{-2}\cos(\lambda r)+(r^2\lambda^{-1}-3\lambda^{-3})\sin(\lambda r) \right]}{\underbrace{\int_{-r}^r (r^2-3z^2)\cos(\lambda z)\,dz}}\right) r^{s-1}\,\chi(r)\,\dr r
\\
=
-\frac{4\pi}{3}\int_0^{+\infty} \left(r^2\lambda^{-1}\sin(\lambda r)+3r\lambda^{-2}\cos(\lambda r)-3\lambda^{-3}\sin(\lambda r)\right) r^{s-1}\,\chi(r)\,\dr r\,.
\end{multline}

A similar argument for the case \eqref{30 March 2024 equation 1} and \eqref{30 March 2024 equation 4} gives us
\begin{equation}
\label{proof reference function lemma equation 3}
[\hat f^{(s)}]_{33}(\lambda)
=
\frac{8\pi}{3}\int_0^{+\infty} \left(r^2\lambda^{-1}\sin(\lambda r)+3r\lambda^{-2}\cos(\lambda r)-3\lambda^{-3}\sin(\lambda r)\right) r^{s-1}\,\chi(r)\,\dr r\,.
\end{equation}
By combining formulae~\eqref{proof reference function lemma equation 2} and~\eqref{proof reference function lemma equation 3} with Lemmata~\ref{lemma dominican 1} and~\ref{lemma dominican 2} one arrives at~\eqref{reference function lemma equation 1}.

Finally, let us explain why
the remainder in formula \eqref{reference function lemma equation 1},
together with its partial derivatives in $\xi$ of any order, is uniform in $s$.
The argument presented above establishes only uniformity under differentiations
along $|\xi|$, i.e.~in the radial direction, so we need to deal with mixed derivatives.
Uniformity under mixed differentiations follows from the observation that
the remainder term in formula \eqref{reference function lemma equation 1}
is covariant under Euclidean rotations.
Indeed, the remainder term in formula \eqref{reference function lemma equation 1}
can be written as a composition of a function of $|\xi|$ with an orthogonal matrix
describing the rotation
(this matrix appears twice in the composition,
acting separately on the tensor indices $\alpha$ and $\beta$).
And the orthogonal matrix describing the rotation can be expressed, say,
in terms of Euler angles or Cardan angles (pitch, yaw, and roll).
In the end, any mixed derivative
of the remainder in formula \eqref{reference function lemma equation 1}
can be written as a linear combination of radial derivatives.
\end{proof}

\begin{remark}
\label{reference function lemma remark}
One might think that a simpler way of proving Lemma~\ref{reference function lemma}
would be to consider the function $\,|y|^{2+s}\,\chi(|y|)\,$,
compute its Fourier transform and then recover \eqref{reference function lemma equation 1}
by performing two partial differentiations in $y$.
However, such an approach does not to lead to uniformity in~$s$ of the remainder as $s\to0$.
The problem here is that the Fourier transform of the function $\,|y|^{2}\,\chi(|y|)\,$
has asymptotic expansion $O(|\xi|^{-\infty})$, namely, it is superpolynomially decaying.
Of course, superpolynomial decay at $s=0$ can be avoided by switching from
$\,|y|^{2}\,\chi(|y|)\,$ to $\,|y|^{2}\,\ln|y|\,\chi(|y|)\,$, but this would mean
working with a function given by two different formulae depending on the value of
the parameter $s$:
$\,|y|^{2+s}\,\chi(|y|)\,$ for $s\ne0$
and
$\,|y|^{2}\,\ln|y|\,\chi(|y|)\,$ for $s=0$.
This clearly destroys uniformity of the remainder as $s\to0$.
\end{remark}

\end{appendices}

\end{document}